\numberwithin{equation}{section}
\theoremstyle{plain}
\newtheorem{theorem}{Theorem}[section]
\newtheorem{proposition}[theorem]{Proposition}
\newtheorem{lemma}[theorem]{Lemma}
\newtheorem{corollary}[theorem]{Corollary}
  \theoremstyle{remark}
\newtheorem{remark}[theorem]{Remark}
  \theoremstyle{definition}
\newtheorem{definition}[theorem]{Definition}
\newenvironment{properties}[1]{\begin{enumerate}

}{\end{enumerate}}
\def\R{\mathbb{R}}
\def\N{\mathbb{N}}
\begin{document}
\subjclass[2020]{35J20, 35B06, 35B33}

\keywords{Nonsmooth critical point methods, Invariant convex cones, Trudinger-Moser-type inequality, Symmetric nonradial solutions.}

\title[]{An Orlicz space approach to exponential elliptic problems in higher dimensions}

\author[A. Boscaggin]{Alberto Boscaggin}
\address{Alberto Boscaggin\newline\indent
Dipartimento di Matematica 
\newline\indent
Universit\`a di Torino
\newline\indent
via Carlo Alberto 10, 10123 Torino, Italy}
\email{alberto.boscaggin@unito.it}

\author[F. Colasuonno]{Francesca Colasuonno}
\address{Francesca Colasuonno\newline\indent
Dipartimento di Matematica
\newline\indent
Universit\`a di Torino
\newline\indent
via Carlo Alberto 10, 10123 Torino, Italy}
\email{francesca.colasuonno@unito.it}

\author[B. Noris]{Benedetta Noris}
\address{Benedetta Noris \newline \indent 
Dipartimento di Matematica \newline\indent
Politecnico di Milano \newline\indent
p.zza Leonardo da Vinci 32, 20133 Milano, Italy}
\email{benedetta.noris@polimi.it}

\author[F. Sani]{Federica Sani}
\address{Federica Sani\newline \indent
Dipartimento di Scienze Fisiche, Informatiche e Matematiche\newline \indent
Universit\`a di Modena e Reggio Emilia\newline \indent
Via Campi 213/A, 41125 Modena, Italy}
\email{federica.sani@unimore.it}

\begin{abstract} 
We consider semilinear elliptic problems of the form
\[
-\Delta u + \lambda u = f(x,u), \quad u\in H^1_0(A),
\]
where $A\subset\mathbb{R}^N$, $N\geq3$, is either a bounded or unbounded annulus, and $\lambda \geq0$.
We study a broad class of nonlinearities $f$ with superlinear growth at infinity,  including exponential- and power-type ones. Under suitable assumptions, we establish the existence of a positive nonradial solution via techniques in the spirit of Szulkin's nonsmooth critical point theory, applied within a convex cone in Orlicz spaces. Notably, the Trudinger-Moser inequality fails in the whole Sobolev space $H^1_0(A)$.
\end{abstract}

\maketitle

\section{Introduction}

In this paper, we are interested in proving the existence of solutions to semilinear elliptic problems of the form
\begin{equation}\label{eq:main}
-\Delta u + \lambda u = f(x,u), \quad u\in H^1_0(\Omega),
\end{equation}
where $\Omega$ is a domain in $\R^N$, $\lambda \geq0$ and $f$ is a function with superlinear behavior at infinity. 
More precisely, we aim to address the challenge of dealing with exponential-type nonlinearities in dimensions higher than two,  where a Trudinger-Moser inequality is not available.

There are several papers in the literature dealing with such nonlinearities. For instance, the case \( f(x,u) = \varepsilon e^u \) with \( \varepsilon > 0 \) and Neumann boundary conditions corresponds to the well-known Keller-Segel equation, which has been analysed using perturbative techniques as \( \varepsilon \to 0^+ \). In both radial and nonradial bounded domains, solutions concentrating on the boundary have been investigated, see for instance \cite{AgudeloPistoia,PistoiaVaira2015,BonheureCasterasNoris1,BonheureCasterasNoris2, BonheureCasterasFoldes2020}. In contrast, the case of Dirichlet boundary conditions, which corresponds to the Gelfand equation, cannot be addressed with the same techniques. When $\Omega$ is a ball, the structure of the set of singular and regular radial solutions has been examined starting from \cite{JL}, we refer for example to the results \cite{Miy,KW,MN}; see also \cite{NS,Pa} for the case of the annulus.
Let us mention that, still under Neumann boundary conditions, the multiplicity of radial solutions for a large class of exponential nonlinearities has been established in \cite[Corollary 1.3]{BCN1} via shooting techniques.  All these results, however, appear quite far in spirit from ours.

The approach adopted in the present paper is indeed of global variational nature and is closely related to that of \cite{BCNW2023,BCNW2024}, see also \cite{CowanMoameni2022JDE,CM_Trans,CowanMoameni2022}, where the focus is on pure  power nonlinearities in radially symmetric domains.
The equation studied in \cite{BCNW2023, BCNW2024} exhibits a lack of compactness, which can arise either from the pure power nonlinearity,   if $f(x, \cdot)$ is supercritical in the Sobolev embedding sense, or from the unboundedness of the domain. Classical effective tools to overcome the lack of compactness typically exploit specific structural features of the problem, such as symmetry or the presence of potentials/weights.  Inspired by this perspective, in \cite{BNW, ST} the authors propose a method for dealing with supercritical radial problems: instead of working in the natural energy space,  they confine the function set to a closed convex cone, see also \cite{ColasuonnoNoris2017, BCN1, BCN2}. This strategy enables to gain compactness and facilitates the construction of radial solutions with specific monotonicity, even for problems that cannot be treated within the full subspace $H^1_{\mathrm{rad}}$ of radial $H^1$-functions. It should be emphasized that, while restricting the function space to a cone with specific symmetry and monotonicity properties aids in restoring the needed compactness, this approach also introduces certain technical challenges:  working within a set that is not a natural constraint requires additional efforts to ensure the invariance properties of the problem with respect to this set.

The technique introduced in \cite{BNW,ST} has been refined in \cite{BCNW2023,BCNW2024} to prove the existence of nonradial solutions of the Dirichlet problem with a pure power nonlinearity, in radial domains that are respectively bounded and unbounded.
In \cite{CM_Trans,CowanMoameni2022JDE,CowanMoameni2022}, Cowan and Moameni demonstrate how to formalize this method within Szulkin's nonsmooth critical point theory \cite{S} in a natural and efficient framework; more precisely, using the nonsmooth mountain pass theorem, they develop a unified approach to find possibly nonradial solutions in suitable invariant convex cones for elliptic equations with supercritical power-type nonlinearities. 
The advantage of this method is that it is adaptable to various types of problems and allows for the discovery of diverse types of solutions: by varying the cone chosen to frame the problem, in \cite{BCNW2023,BCNW2024} new rotationally nonequivalent solutions are found. 

Depending on the chosen cone, the effective dimension of the problem becomes a smaller integer $2\le n < N$, thus increasing the corresponding critical exponent. For $n>2$, the improved Sobolev critical exponent $2^*_n:=2n/(n-2)$ is finite and the case $p< 2^*_n$ was addressed in \cite{BCNW2024};
for $n=2$,  it is showed in \cite{BCNW2023,BCNW2024} that the invariant cone within which the authors work is embedded in $L^q$ for every $q \in (2, \infty)$. As a result, every power-type nonlinearity behaves as subcritical in the cone. 
Actually, since the dimension $n=2$ corresponds to the limiting case of Sobolev embeddings of $H^1$-functions, one can expect that the power-type growth can be improved, allowing to consider exponential nonlinearities.

\smallskip

Moving from this observation, the present paper aims to provide a unified approach for addressing both exponential- and power-type growth,  dealing with problems on bounded as well as unbounded domains. To maintain a general framework,  we include  nonlinearities subject to assumptions \ref{f1}-\ref{f5} below. 
Our main abstract existence result, Theorem \ref{thm:abstract}, is detailed in Section \ref{sec:stat-main-res}.
Classical approaches to two-dimensional problems with exponential growth (see e.g. \cite{dFMR}) are typically addressed in $H^1_0(\Omega)$, which naturally embeds in the ``small'' Orlicz space $\mathcal{M}(\Omega)$ with exponential Young function $e^{u^2}-1$ as defined in \eqref{eq:M-def}. 
As in dimension $N\ge 3$ such an embedding fails, in order to ensure the well-posedeness of the energy functional, we formulate the problem in the space $V = H^1_0(\Omega) \cap \mathcal{M}(\Omega)$. 
Moreover, to gain compactness, we confine the function set to a closed convex cone $K$.
Some drawbacks arise from this choice of the functional space: we can show that a Palais-Smale-type condition holds only with respect to the $H^1$-topology in the cone $K$, and this prevents the direct application of Szulkin's results \cite{S}.
To overcome this issue, we rely on a suitable version of the nonsmooth mountain pass theorem which provides the existence of a Palais-Smale sequence without compactness hypotheses, see Theorem \ref{thm:ArcoyaBereanuTorres}.
Then, in order to get a solution, we prove that it is possible to pass to the limit in the variational inequality involved in the definition of Szulkin's Palais-Smale sequences. 
Clearly, when working in dimension $N=2$, it holds $V=H^1_0(\Omega) \cap \mathcal{M}(\Omega)=H^1_0(\Omega)$, by the Trudinger-Moser inequality.
Therefore,  our abstract Theorem \ref{thm:abstract} applies with $K=\{u \in H^1_0(\Omega): \, u\geq0\}$, thus recovering some known results in the subcritical exponential case, along the lines of \cite[Theorem 1.1]{dFMR} by de Figueiredo,  Miyagaki, and Ruf.

As an application of the abstract Theorem \ref{thm:abstract}, we consider \eqref{eq:main} in the possibly unbounded annulus
\begin{equation}\label{eq:A-def}
\Omega=A :=\{x\in\mathbb{R}^N:\, R_0<|x|<R_1\}, 
\end{equation}
with $0<R_0 <R_1\le \infty$ and $N\geq3$. For the reader's convenience, let us state here explicitly a consequence of Theorem \ref{thm:abstract} in the case of exponential nonlinearities, while we postpone the more general formulation to the next section.  Let us consider the problem
\begin{equation}\label{eq:main-exp}
\begin{cases}
-\Delta u + \lambda u = u^{\beta-1}(e^{u^\beta}-1) \qquad &\mbox{in }{A}\\
u>0 &\mbox{in }{A}\\
u\in H^1_0(A)
\end{cases}
\end{equation}
with $\lambda > 0$ ($\lambda\ge 0$ if $A$ is bounded) and $\beta\in (1,2)$. 
A direct application of Theorem \ref{thm:abstract} provides a nontrivial solution of \eqref{eq:main-exp} which, due to the symmetries of the problem, may be radial.
In order to ensure the nonradiality of the solution, we rely on a symmetry breaking result, see Theorem \ref{thm_nonradial}, which requires an additional assumption involving the following constant 
\begin{equation}\label{eq:def-H}
H=H(N,\lambda,R_0):=\left(\frac{N-2}{2}\right)^2+\lambda R_0^2
\end{equation}
arising from the Hardy inequality.  Then we have the following result.

\begin{theorem}\label{thm:main-exp}
Let $A$ be as in \eqref{eq:A-def} with $0<R_0 <R_1\le \infty$ and $N\geq3$.  Let $\lambda> 0$ ($\lambda \geq0$ if $A$ is bounded) and
\begin{equation}\label{eq:hp-beta}
\beta \in \left( 1+\frac{N}{H},2\right) \mbox{ if }\lambda=0,
\qquad
\beta \in \left[ 1+\frac{N}{H},2\right) \mbox{ if }\lambda>0.
\end{equation}
Then, there exists a nonradial solution of \eqref{eq:main-exp}.
\end{theorem}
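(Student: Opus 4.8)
The plan is to obtain Theorem~\ref{thm:main-exp} by combining the abstract existence result, Theorem~\ref{thm:abstract}, with the symmetry breaking result, Theorem~\ref{thm_nonradial}. Granting those two theorems, what remains is a verification, in two parts: (i) that the nonlinearity
\[
f(x,s)=s^{\beta-1}\bigl(e^{s^{\beta}}-1\bigr)\quad(s\ge0),\qquad f(x,s)=0\quad(s\le0),
\]
fulfils the structural hypotheses \ref{f1}--\ref{f5}; and (ii) that the additional assumption of Theorem~\ref{thm_nonradial}, specialised to this $f$, is exactly condition \eqref{eq:hp-beta}.

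\emph{Verifying \ref{f1}--\ref{f5}.} The function $f$ is Carathéodory with $f(x,0)=0$, and a substitution $t=s^{\beta}$ gives the primitive $F(x,s)=\frac1\beta\bigl(e^{s^{\beta}}-1-s^{\beta}\bigr)$. Since $\beta\in(1,2)$, near the origin $F(x,s)=\frac1{2\beta}s^{2\beta}+o(s^{2\beta})$ with $2\beta>2$, so $f$ is superquadratic at $0$ and yields the required mountain pass geometry; at infinity $s^{\beta}<s^{2}$ forces $e^{s^{\beta}}-1\le e^{s^{2}}-1$, i.e.\ $f$ has exponential growth strictly below the critical Trudinger--Moser rate, which is precisely what makes the energy of class $C^{1}$ on $V=H^{1}_{0}(A)\cap\mathcal M(A)$, the small Orlicz space $\mathcal M(A)$ of \eqref{eq:M-def} being the natural target for the Nemytskii operator associated with $f$. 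The Ambrosetti--Rabinowitz-type condition holds with $\theta=2\beta>2$, since with $t=s^{\beta}$
\[
\frac{s\,f(x,s)}{F(x,s)}=\beta\,\frac{t\,(e^{t}-1)}{e^{t}-1-t}
\]
is strictly increasing on $(0,\infty)$, with infimum $2\beta$ as $t\to0^{+}$ and limit $+\infty$ as $t\to+\infty$ (the strict monotonicity amounting to $2\sinh(t/2)>t$ for $t>0$). Finally the monotonicity of $s\mapsto f(x,s)/s=s^{\beta-2}(e^{s^{\beta}}-1)$ on $(0,\infty)$, needed for the Nehari/cone construction, reduces to $g(t):=\beta t\,e^{t}-(2-\beta)(e^{t}-1)\ge0$ on $[0,\infty)$, which holds because $g(0)=0$ and $g'(t)=e^{t}(2\beta-2+\beta t)\ge0$ when $\beta\ge1$.

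\emph{Existence and nonradiality.} By the previous step and Theorem~\ref{thm:abstract} applied on $\Omega=A$ with the relevant closed convex cone $K\subset V$ (the restriction $\lambda>0$ for unbounded $A$, $\lambda\ge0$ for bounded $A$, being inherited from that theorem), problem \eqref{eq:main-exp} has a nontrivial solution $u\in K$, $u\ge0$; since $u$ lies in the exponential Orlicz class, $f(x,u)\in L^{p}_{\mathrm{loc}}(A)$ for every $p<\infty$, so a standard bootstrap makes $u$ classical, and $f\ge0$ together with the strong maximum principle gives $u>0$ in $A$. To see that $u$ is nonradial we argue by contradiction: if $u=u(|x|)$, then $u$ is a mountain pass critical point of $J(v)=\frac12\int_{A}(|\nabla v|^{2}+\lambda v^{2})-\int_{A}F(x,v)$ in $K$, hence a minimiser of $J$ on the Nehari set of $K$. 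Testing the second variation along $\varphi=u\,Y$, with $Y=Y(x/|x|)$ the lowest spherical harmonic compatible with the symmetry defining $K$ — necessarily of degree two, so $-\Delta_{S^{N-1}}Y=2N\,Y$ — and using the orthogonality of $\nabla u$ and $\nabla Y$ together with the equation solved by $u$ (tested against $u\,Y^{2}$), one gets
\[
J''(u)[\varphi,\varphi]=2N\int_{A}\frac{u^{2}}{|x|^{2}}Y^{2}-\int_{A}\bigl(u\,f_{s}(x,u)-f(x,u)\bigr)u\,Y^{2}.
\]
Now the sharp superlinearity inequality $s\,f_{s}(x,s)-f(x,s)\ge 2(\beta-1)f(x,s)$ for $s>0$ — whose optimal constant $2(\beta-1)$ is governed by the power-type behaviour $F\sim s^{2\beta}$ at the origin — together with the equation once more and with the Hardy-type inequality $\int_{A}(|\nabla v|^{2}+\lambda v^{2})\ge H\int_{A}|x|^{-2}v^{2}$, $H$ as in \eqref{eq:def-H} (the summand $\lambda R_{0}^{2}$ stemming from $|x|>R_{0}$ on $A$), yields
\[
J''(u)[\varphi,\varphi]\le\bigl(2N-2(\beta-1)H\bigr)\int_{A}\frac{u^{2}}{|x|^{2}}Y^{2}=2H\Bigl(1+\tfrac{N}{H}-\beta\Bigr)\int_{A}\frac{u^{2}}{|x|^{2}}Y^{2}\le0
\]
under \eqref{eq:hp-beta}; the distinction between the strict threshold when $\lambda=0$ and the non-strict one when $\lambda>0$ is exactly the strictly positive gap contributed by the term $\lambda\int_{A}v^{2}$. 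This contradicts the minimality of $u$ and, being precisely Theorem~\ref{thm_nonradial} read for the present $f$, shows that the solution is nonradial.

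\emph{The main obstacle.} Taking Theorems~\ref{thm:abstract} and~\ref{thm_nonradial} as known, the delicate point of this corollary is the exact bookkeeping in the last step: one must identify the optimal superlinearity constant $2(\beta-1)$ of the exponential nonlinearity (dictated, somewhat counterintuitively, by the behaviour of $f$ at $s=0$ rather than at infinity) and match it against the eigenvalue $2N$ of the lowest admissible harmonic and the Hardy constant $H$, so as to land exactly on the threshold $1+N/H$ of \eqref{eq:hp-beta}. The substantive difficulty of the whole scheme is of course already absorbed into Theorem~\ref{thm:abstract} — the cone $K$ is not a natural constraint, the Palais--Smale condition is available only in the $H^{1}$-topology within $K$ (forcing the use of a mountain pass theorem without compactness, Theorem~\ref{thm:ArcoyaBereanuTorres}), and one must pass to the limit in Szulkin's variational inequality to recover a genuine solution — but all of that is handled at the abstract level.
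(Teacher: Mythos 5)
Your proposal is correct and follows essentially the same route as the paper, which deduces Theorem \ref{thm:main-exp} from Corollary \ref{cor}(i) with $w\equiv1$ and $m=1$ (itself an instance of Theorem \ref{thm_nonradial} via the verifications in Remark \ref{rem:prototype}): your key quantities --- the primitive $F=\tfrac1\beta\bigl(e^{s^{\beta}}-1-s^{\beta}\bigr)$, the Ambrosetti--Rabinowitz exponent $\sigma=2\beta$, the sharp superlinearity constant $2\beta-1$ entering \ref{f12}, the eigenvalue $2N$ of the degree-two harmonic $\eta$, and the Hardy constant $H$ --- all coincide with the paper's. The only differences are cosmetic: you specialize directly to this $f$ rather than passing through the general prototype \eqref{eq:prototype}, and you compress the final contradiction (the paper's explicit path construction via Lemma \ref{lem:g} and the implicit function theorem, where the orthogonality $\int_{\mathbb S^{N-1}_r}\eta\,d\sigma=0$ kills the first-order term) into a Nehari-minimality statement, which is legitimate since you take Theorem \ref{thm_nonradial} as granted.
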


We observe that the conditions in \eqref{eq:hp-beta} do not trivialize whenever $N<H$ ($N\leq H$ if $\lambda$ is positive): this is the case for instance when $N$ or $\lambda R_0^2$ are large.  
In order to show that equation \eqref{eq:main-exp} fits into the framework of the abstract Theorem \ref{thm:abstract}, a crucial intermediate step is to identify a suitable convex cone $\mathcal K$, see Definition \ref{eq:K-def}, and to recover the necessary compactness. Notice that the critical case for problem \eqref{eq:main-exp} in $\mathcal K$ corresponds to the maximal value of $\beta$ allowed for the validity of a Pohozaev-Trudinger-Moser embedding.
We prove a uniform estimate in $\mathcal K$ of Pohozaev-Trudinger-Moser-type with a growth behaving like $e^{u^2}$ at infinity, see Proposition \ref{cor:Trud-Moser1}; this implies that for $\beta < 2$ the equation in \eqref{eq:main-exp} behaves as subcritical in $\mathcal K$,  although supercritical in $H^1_0(A)$.

A direct application of the abstract Theorem \ref{thm:abstract} also leads to recover \cite[Theorems 1.1 and 1.2]{BCNW2023} and \cite[Theorem 1.1]{BCNW2024} with $m=N-1$, \cite[Theorem 1.2]{BCNW2024} with $n=2$, in the case of the pure power nonlinearity $f(x,s)=|s|^{p-2}s$,  $p>2$: working in the small Orlicz space $\mathcal{M}(\Omega)$ allows to fit both the exponential and the power-type growth into the same framework. 

The presentation of these existence and symmetry breaking results in a more general form (see Theorems \ref{thm:applied1} and \ref{thm_nonradial} respectively), as well as the outline of the paper, are postponed to the next section.

\section{Statements of the main results}\label{sec:stat-main-res}{}
In order to state our main results in their full generality, let $\Omega$ be a domain in $\mathbb R^N$, $N\ge 2$, and let us detail the assumptions on the nonlinearity $f$ appearing in \eqref{eq:main}. 
Letting $F(x,s):=\int_0^s f(x,t)\, dt$, we assume:
\begin{properties}{f}
  \item\label{f1} $f\in C(\Omega\times\R;\R)$; $f(x,s)s>0$ for all $x\in \Omega$ and $s\neq 0$; 
  for all $M>0$, there exists $C_M>0$ such that $|f(x,s)|\le C_M$ for every $x\in \Omega$ and $|s|\le M$;\smallskip
  \item \label{f2}  for all $\alpha>0$
  $$\lim_{|s|\to\infty} \frac{f(x,s)}{e^{\alpha s^2}} =0 \quad \text{ uniformly for } x\in\Omega;$$
  \item\label{f3} \begin{itemize}
\item[(b)] \emph{if $\Omega$ is bounded}: there exist $\sigma>2$ and $\overline{M}>0$ such that
$$\sigma F(x,s)\le f(x,s)s \quad \text{ for all } x\in\Omega \text{ and } |s|\ge \overline{M};$$
\item[(u)] \emph{if $\Omega$ is unbounded}: there exists $\sigma>2$ such that
$$\sigma F(x,s)\le f(x,s)s \quad \text{  for all } x\in\Omega \text{ and }s\in\mathbb R;$$
moreover, there exist $0 < c_1\in L^\infty(\Omega)$ and $c_2\in L^1(\Omega)$ such that
$$F(x,s)\ge c_1(x)|s|^\sigma-c_2(x) \quad \text{  for all } x\in\Omega \text{ and }s\in \mathbb R;$$
\end{itemize}
  \item\label{f5} 
\begin{itemize}
\item[(b)] \emph{if $\Omega$ is bounded}: denoting by $\lambda_1$ the first eigenvalue of the Laplacian with homogeneous Dirichlet boundary conditions in $\Omega$, it holds
$$\limsup_{s\to0} \frac{2 F(x,s)}{s^2} < \lambda_1+\lambda \quad \text{ uniformly for } x\in \Omega;$$
\item[(u)] \emph{if $\Omega$ is unbounded}: there exists $\mu>1$ such that
$$\limsup_{s\to0} \frac{|f(x,s)|}{|s|^{\mu}} < \infty \quad \text{ uniformly for } x\in \Omega.$$
\end{itemize}  	
\end{properties}

We refer to Remark \ref{rem:f5u-b} for some comments on the different assumptions required in the bounded and in the unbounded case.

In order to provide a variational setting for \eqref{eq:main}, we introduce the Orlicz space 
\begin{equation*}\label{eq:L-def}
\mathcal L(\Omega):=\left\{ u:\Omega \to \R \text{ measurable}\,:\, \int_\Omega \left( e^{\left(\frac{u}{k}\right)^2} -1 \right)\,dx <\infty \text{ for some } k>0 \right\},
\end{equation*}
which is a Banach space endowed with the norm
\[
\|u\|_{\mathcal L(\Omega)} := \inf\left\{ k>0 \, :\, \int_\Omega \left( e^{\left(\frac{u}{k}\right)^2} -1 \right)\,dx \leq 1 \right\}.
\]
As we recall in Lemma \ref{lem:L-embedded}, $\mathcal L(\Omega)$ is continuously embedded in $L^q(\Omega)$ for every $q\in[2,\infty)$. We consider also its closed subspace\footnote{This space has already been introduced and used in the literature, see for example \cite[Section 3.4, Definition 2]{RR}, but its name and notation are not yet consolidated. In this paper we refer to it as {\it small Orlicz space}.}
\begin{equation}\label{eq:M-def}
\mathcal M(\Omega):=\left\{ u:\Omega \to \R \text{ measurable}\,:\, \int_\Omega \left( e^{\left(\frac{u}{k}\right)^2} -1 \right)\, dx <\infty \text{ for all } k>0 \right\},
\end{equation}
endowed with the same norm. Finally, we let the Banach space $V:= H^1_0(\Omega) \cap \mathcal M(\Omega)$ be equipped with the norm
\begin{equation}\label{eq:normV}
\|u\|_{V}:=\|u\|_{H^1_\lambda(\Omega)} + \|u\|_{\mathcal L(\Omega)},
\end{equation}
where
\[
\|u\|_{H^1_\lambda(\Omega)}:=\left( \int_\Omega \left(|\nabla u|^2+\lambda u^2\right)\,dx\right)^{1/2}.
\]
In the following we will simply write $\|\cdot\|_{H^1(\Omega)}$ for the standard $H^1$-norm $\|\cdot\|_{H^1_1(\Omega)}$. 
The Euler-Lagrange functional associated with \eqref{eq:main}, namely
\begin{equation}\label{eq:J-def}
J(u):=\frac12 \|u\|_{H^1_\lambda(\Omega)}^2-\int_\Omega F(x,u) \, dx,
\end{equation}
is well defined and of class $C^1$ in $V$, see Remark \ref{rmk:JC1}. Nevertheless, as already mentioned in the Introduction, $J$ possibly lacks compactness in the whole space $V$.  

Our first main result is an abstract theorem, extending \cite[Theorem 2.1]{CowanMoameni2022}, which ensures the existence of solutions of \eqref{eq:main} via variational techniques, provided that there exists a subset $K$ of $V$ satisfying suitable assumptions of compactness and invariance. The set $K$ must be a convex cone, meaning that
\begin{itemize}
\item $u\in K$ implies $t u \in K$ for every $t\ge0$;
\item $u,\, -u\in K$ implies $u=0$;
\item $u,\, v\in K$ implies $u+v\in K$.
\end{itemize}
Our abstract theorem can be stated as follows.
\begin{theorem}\label{thm:abstract}
Let $\Omega$ be a domain in $\R^N$, $N\ge 2$, $\lambda >0$ ($\lambda\geq 0$ if $\Omega$ is bounded), and $f$ satisfy \ref{f1}-\ref{f5}. Let $J:V\to\R$ be as in \eqref{eq:J-def}.
Finally, let $K \subset H^1_0(\Omega)$ be a closed convex cone and assume that: 
\begin{itemize}
\item[(i)] \emph{Embeddings:} $K\hookrightarrow \mathcal M(\Omega)$, i.e. $K\subset \mathcal M(\Omega)$ and there exists $C>0$ such that $\|u\|_{\mathcal L(\Omega)}\le C \|u\|_{H^1_\lambda(\Omega)}$ for every $u\in K$;\\
moreover, \emph{if $\Omega$ is unbounded,} $K\hookrightarrow \hookrightarrow L^q(\Omega)$ for every $q> 2$, i.e. if $(u_n)_n \subset K$ is bounded, there exists $u\in K$ such that for every $q>2$, $u_n\to u$ in $L^q(\Omega)$ up to a subsequence; 
\item[(ii)] \emph{Pointwise invariance:} for each $u\in K$ there exists $v\in K$ weak solution of
\[
-\Delta v + \lambda v = f(x,u) \quad \mbox{in }{\Omega}.
\] 
\end{itemize}
Then there exists a nontrivial weak solution $u\in K$ of \eqref{eq:main} such that 
\[
J(u)=\inf_{\gamma\in\Upsilon}\sup_{t\in[0,1]}J(\gamma(t))>0,
\]
with $\Upsilon:=\{\gamma\in C([0,1]; K)\,:\gamma(0)=0	\neq\gamma(1),\,J(\gamma(1))\le 0\}$.
\end{theorem}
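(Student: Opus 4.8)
The plan is to apply the nonsmooth mountain pass theorem without compactness (Theorem~\ref{thm:ArcoyaBereanuTorres}) to the restriction of $J$ to the cone $K$, and then to recover a genuine solution of \eqref{eq:main} by passing to the limit in Szulkin's variational inequality. First I would set up the nonsmooth framework: since $K$ is a closed convex cone in $V$, the functional $J|_K$ can be written as the sum of the $C^1$ functional $J$ and the indicator function $\chi_K$ of $K$, which is convex, proper and lower semicontinuous; this is precisely Szulkin's class of functionals. The candidate critical value is the mountain pass level over the path family $\Upsilon$. I would verify the mountain pass geometry: the condition $J(0)=0$ is immediate, and the local minimum structure near $0$ follows from \ref{f5} together with the embedding in (i). In the bounded case, \ref{f5}(b) and the Poincar\'e inequality give $\int_\Omega F(x,u)\,dx \le \frac{1}{2}(1-\delta)\|u\|_{H^1_\lambda}^2 + o(\|u\|_{H^1_\lambda}^2)$ near $0$, using also \ref{f1}--\ref{f2} and the embedding $K\hookrightarrow \mathcal M(\Omega)\hookrightarrow L^q$ to control the superquadratic remainder; in the unbounded case \ref{f5}(u) plays the analogous role with the power $\mu>1$. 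Hence $J(u)\ge \rho>0$ on $\{u\in K:\|u\|_{H^1_\lambda}=r\}$ for small $r$. For the far end of the path, I would use the Ambrosetti--Rabinowitz condition \ref{f3}: it yields $F(x,s)\ge c|s|^\sigma - c'$ (in the unbounded case this is assumed directly with $L^\infty$, $L^1$ coefficients), so along a ray $tu_0$ with $u_0\in K\setminus\{0\}$ fixed, $J(tu_0)\to -\infty$ as $t\to\infty$, giving a point $e\in K$ with $J(e)\le 0$ and $\|e\|_{H^1_\lambda}>r$. Thus $\Upsilon\ne\emptyset$ and the mountain pass value is strictly positive.

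Next I would invoke Theorem~\ref{thm:ArcoyaBereanuTorres} to obtain a Szulkin--Palais--Smale sequence $(u_n)_n\subset K$ at the level $c>0$: that is, $J(u_n)\to c$ and there exists $\varepsilon_n\to 0^+$ such that
\[
\langle J'(u_n), v - u_n\rangle \ge -\varepsilon_n \|v-u_n\|_V \quad \text{for all } v\in K.
\]
The standard argument using \ref{f3} then shows $(u_n)_n$ is bounded in $H^1_\lambda(\Omega)$: testing the inequality with $v=2u_n$ and with $v=0$ and combining with the energy bound gives $\left(\tfrac{\sigma}{2}-1\right)\|u_n\|_{H^1_\lambda}^2 \le C + C\|u_n\|_{H^1_\lambda}$ up to the exceptional set where $|u_n|\le\overline M$ (controlled by \ref{f1}). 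The key point is that by assumption (i) this bound gives a bound in $\mathcal M(\Omega)$ as well, hence in $V$, so $(u_n)_n$ is bounded in $V$ and in $K$.

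Then I would extract a limit $u\in K$: by (i), in the unbounded case $K\hookrightarrow\hookrightarrow L^q(\Omega)$ for every $q>2$ gives $u_n\to u$ in $L^q$ up to a subsequence; in the bounded case the Rellich theorem and $K\subset H^1_0(\Omega)$ give the same. Along this subsequence $u_n\rightharpoonup u$ in $H^1_\lambda$ and, using \ref{f1}--\ref{f2} and the growth estimates available in the cone, $f(x,u_n)\to f(x,u)$ in an appropriate dual sense so that $\int_\Omega f(x,u_n)(v-u_n)\to\int_\Omega f(x,u)(v-u)$ for $v\in K$ fixed. The delicate point is the diffusion term: plugging $v$ fixed into the variational inequality and passing to the $\liminf$, the term $-\int_\Omega \nabla u_n\cdot\nabla u_n$ must be handled by weak lower semicontinuity, which goes the right way, yielding
\[
\int_\Omega \nabla u\cdot\nabla(v-u) + \lambda u(v-u)\,dx \ge \int_\Omega f(x,u)(v-u)\,dx \quad \text{for all } v\in K.
\]
Choosing $v = u\pm\phi$ with $\pm\phi\in K$ and using pointwise invariance (ii) — which guarantees that for $u\in K$ the solution $v$ of the linear problem with datum $f(x,u)$ stays in $K$, so that $u$ is forced to coincide with it — converts the inequality into the equation $-\Delta u + \lambda u = f(x,u)$ weakly. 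Finally, $J(u)=c>0$ forces $u\ne 0$, and one concludes $u$ is the desired nontrivial solution; the identity $J(u)=c$ is obtained by showing $u_n\to u$ strongly in $H^1_\lambda$ along the subsequence (testing with $v=u$, using the inequality both for $u_n$ and in the limit).

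\textbf{Main obstacle.} The principal difficulty is exactly the passage to the limit in Szulkin's variational inequality under the mismatch of topologies: the Palais--Smale condition only holds with respect to the $H^1$-topology inside $K$, not in the full $V$-norm appearing on the right-hand side of the inequality. One must therefore exploit that the test functions $v\in K$ can be kept fixed while only $u_n$ varies, so that $\|v-u_n\|_V$ stays bounded and $\varepsilon_n\|v-u_n\|_V\to 0$; and one must verify carefully that the compact embedding in (i) together with the exponential integrability furnished by $K\hookrightarrow\mathcal M(\Omega)$ is enough to pass to the limit in the nonlinear term $\int_\Omega f(x,u_n)(v-u_n)\,dx$ despite the exponential growth allowed by \ref{f2}. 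Ensuring strong $H^1$-convergence to upgrade $J(u_n)\to c$ into $J(u)=c$ is the second technical hinge.
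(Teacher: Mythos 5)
Your proposal is correct and follows essentially the same route as the paper: the nonsmooth mountain pass theorem without compactness (Theorem \ref{thm:ArcoyaBereanuTorres}) applied on the cone, boundedness of the Palais--Smale sequence via the Ambrosetti--Rabinowitz condition, strong $H^1$-convergence obtained by testing the Szulkin inequality with $v=u$, passage to the limit in the variational inequality with fixed test functions $v\in K$, and the upgrade from variational inequality to equation via the pointwise invariance (ii); you also correctly identify the $H^1$-versus-$V$ topology mismatch as the central obstacle. One small structural difference: you split $J_K$ as $\chi_K + J$ (indicator plus $C^1$ part), whereas the paper puts the quadratic term into the convex part $\Psi_K$ and keeps only $\Phi(u)=\int_\Omega F(x,u)\,dx$ as the smooth part; both decompositions are admissible in Szulkin's framework and lead to equivalent estimates, your choice merely replacing $\tfrac12\|u_n\|^2-\tfrac12\|v\|^2$ by its linearization $\langle u_n,u_n-v\rangle_{H^1_\lambda}$. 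The one clause you should delete is ``choosing $v=u\pm\phi$ with $\pm\phi\in K$'': since $K$ is a cone, $\phi,-\phi\in K$ forces $\phi=0$, so this test is vacuous; it is the pointwise-invariance argument you state immediately afterwards (test the linear problem with $\eta=u-v$ and combine with the critical-point inequality to get $\|u-v\|_{H^1_\lambda}^2\le 0$) that actually closes the proof, exactly as in the paper.
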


\begin{remark}\label{rem:compact-emb-Om-bdd}
Note that assumption (i) of the previous theorem does not explicitly require a compact embedding for $K$ when $\Omega$ is bounded. In this case, the compact embedding $K\hookrightarrow\hookrightarrow L^q(\Omega)$ holds automatically for every $q \in [1,\infty)$ without further requirements. Indeed,  we recall in Lemma \ref{lem:L-embedded} below that $\mathcal M(\Omega)\hookrightarrow L^q(\Omega)$ for every $q\in [1,\infty)$.
Being $K\hookrightarrow \mathcal M(\Omega)$ by assumption (i) of Theorem \ref{thm:abstract}, it follows that $K\hookrightarrow L^q(\Omega)$ for every $q\in [1,\infty)$.
Moreover,  by standard compact Sobolev embeddings, $K\hookrightarrow H^1_0(\Omega)\hookrightarrow\hookrightarrow L^q(\Omega)$ for every $q\in[1,2^*)$, with $2^*:=2N/(N-2)$ if $N>2$ and $2^*:=+ \infty$ if $N=2$. Hence, the claimed compact embedding can be established via interpolation, following, for instance, the argument in \cite[Proposition 2.8]{BCNW2024}.
\end{remark}

\begin{remark}\label{rem:equivalent-norms}
In view of the definition in \eqref{eq:normV} and of the continuous embedding (i),  the norms $\|\cdot\|_{H^1_\lambda}$ and  $\|\cdot\|_V$ are equivalent in $K$.  As a consequence,  $K$ is closed also with respect to the norm $\|\cdot\|_V$ and, being convex, it is weakly closed both in $H^1_0(\Omega)$ and in $V$.
\end{remark}

As already mentioned in the Introduction, the proof of Theorem \ref{thm:abstract} relies on the nonsmooth critical point theory developed by Szulkin \cite{S}, which provides critical points for $C^1$-perturbations of convex lower semicontinuous functionals. In our setting,  in order to gain compactness, we consider a functional $J_K$ that coincides with $J$ in $K$ and equals $+\infty$ outside $K$, see \eqref{eq:J_def2} for the precise definition.
We stress that  the choice of the function space $V=H^1_0(\Omega)\cap \mathcal M(\Omega)$ is crucial to ensure the $C^1$-regularity of the nonlinear term $\Phi$ in \eqref{eq:J_def2}, cf. Proposition \ref{prop:J-C1}, whereas $\Phi$ is not even well defined in $H^1_0(\Omega)$.  
Thanks to assumption (i) of Theorem \ref{thm:abstract}, working with the functional $J_K$ in $V$ allows to overcome some compactness issues.
As a counterpart, an extra difficulty arises: while we can prove convergence of Palais-Smale sequences in $H^1_0(\Omega)$, it seems hard to establish such a convergence with respect to the $\mathcal L(\Omega)$-norm. To overcome this technical issue, inspired by \cite{LM,ABT}, we formalize a purely geometric version of the nonsmooth mountain pass theorem, see Theorem \ref{thm:ArcoyaBereanuTorres}, where the existence of a Palais-Smale sequence at the mountain pass level $c$ is guaranteed without compactness hypotheses.
Using the $H^1$-strong convergence of this Palais-Smale sequence to a function $u\in K$, through some delicate estimates we show that $J_K(u)=c$ and that $u$ is a critical point of $J_K$ in the nonsmooth sense.  Finally, assumption (ii) of Theorem \ref{thm:abstract} allows to conclude that $u$ is a solution of \eqref{eq:main}.

Next, we provide an application of Theorem \ref{thm:abstract}: we exhibit a special domain and impose additional conditions on the nonlinearity $f$ to verify the assumptions of Theorem \ref{thm:abstract}, thereby guaranteeing the existence of a solution for the problem under consideration.
More precisely, in order to recover the necessary embeddings and compactness required in (i) of Theorem \ref{thm:abstract}, we require the problem to have suitable symmetry. For simplicity, we consider the radially symmetric domain $\Omega=A$ defined in \eqref{eq:A-def}, with $0<R_0 <R_1\le \infty$ and $N\geq3$. 
Hence, the specific problem that we study is the following
\begin{equation}\label{eq:main-appl}
\begin{cases}
-\Delta u + \lambda u = f(x,u) \qquad &\mbox{in }A\\
u>0 &\mbox{in }A\\
u=0 &\mbox{on }{\partial A}
\end{cases}
\end{equation}
where $f:A\times\mathbb{R}^+\to \mathbb{R}$ satisfies $f(x,0)=0$ and from now on we consider its odd extension $f(x,-s)=-f(x,s)$. We look for axially symmetric solutions satisfying further symmetry and monotonicity assumptions: precisely, letting $x=(x_1,\ldots,x_{N-1},x_N)\in A$, we will work with functions in $H^1_0(A) \cap \mathcal M(A)$ that only depend on the two variables 
\begin{equation}\label{eq:def_r-theta}
\begin{gathered}
r:=|x|=\sqrt{x_1^2+\ldots+x_N^2}\in (R_0,R_1),\\
\theta:=\arctan\left(\frac{|x_N|}{\sqrt{x_1^2+\ldots+x_{N-1}^2}}\right) 
=\arcsin\left(\frac{|x_N|}{r}\right) \in \left[0,\pi/2\right],
\end{gathered}
\end{equation}
namely
\[
u(x) = \mathfrak{u}\left(r,\theta \right) 
\quad \text{for a function } \mathfrak{u}: (R_0,R_1)\times\left[0,\frac{\pi}{2}\right] \to \mathbb{R}.
\]
We observe that these are axially symmetric functions
with respect to the $x_N$-axis, which are symmetric with respect to the hyperplane $x_N = 0$.
In addition to these symmetry assumptions, we impose positivity and a monotonicity property that we collect in the following class of functions
\begin{equation}\label{eq:K-def}
\mathcal K:= \Bigl\{u\in H^1_0(A)\,:\, u=\mathfrak{u}(r,\theta),\: \,u\ge 0,\: \mathfrak{u}_\theta\le 0\:  \mbox{ a.e.}
\Bigr\},
\end{equation}
where $\mathfrak{u}_\theta$ stands for the weak partial derivative of $\mathfrak{u}$ with respect to the variable $\theta$.  
We prove in Proposition \ref{prop:cont-embedding} that $\mathcal K\subset\mathcal M(A)$ with continuous embedding.

In order to ensure that the pointwise invariance (ii) of Theorem \ref{thm:abstract} holds in $\mathcal K$, we need to require additional assumptions on $f$, namely:
\begin{properties}{f}
\setcounter{enumi}{4}
\item \label{f6} for all $x\in A$ and $s\ge 0$, $f(x,s)=\mathfrak{f}(r,\theta, s)$ ;\smallskip
\item \label{f7} for all $s\ge 0$, $f(\cdot,s)\in W^{1,1}_{\mathrm{loc}}(A)$ and $\mathfrak{f}_\theta\le 0$ a.e.; \smallskip
\item \label{f8} for all $x\in A$, $f(x,\cdot)$ is differentiable and $\partial _s f(x,s)\ge 0 $ for every $s\ge0$;\smallskip
\item \label{f10} 
for all $M>0$, there exist $d_M\in L^2(A)$ and $D_M>0$ such that $|\nabla _xf(x,s)|\le d_M(x)$ and $|\partial_s f(x,s)|\le D_M$ for every $x\in A$ and $s\in[0,M]$.
 \end{properties}
 
We show that, under the previous assumptions, Theorem \ref{thm:abstract} applies, thus providing a mountain pass solution of \eqref{eq:main-appl} belonging to the cone $\mathcal{K}$.

\begin{theorem}\label{thm:applied1}
Let $A$ be as in \eqref{eq:A-def}, with $0<R_0 <R_1\le \infty$ and $N\geq3$.
Let $\lambda> 0$ ($\lambda \geq0$ if $A$ is bounded) and $f$ satisfy \ref{f1}--\ref{f5}, with $\Omega=A$, and \ref{f6}--\ref{f10}. 
Then there exists a nontrivial solution of \eqref{eq:main-appl} belonging to the set $\mathcal K$ defined in \eqref{eq:K-def} at energy level
\begin{equation}\label{eq:c_level_def}
c=\inf_{\gamma\in \Upsilon}\sup_{t\in [0,1]}J(\gamma(t)),
\end{equation}
with $\Upsilon =\{\gamma\in C([0,1];\mathcal K)\,:\,\gamma(0)=0\neq\gamma(1),\, J(\gamma(1))\le 0\}$ and $J$ as in \eqref{eq:J-def} with $\Omega=A$.
\end{theorem}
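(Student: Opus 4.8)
The plan is to verify that the abstract Theorem~\ref{thm:abstract} applies with $\Omega=A$ and $K=\mathcal K$, so that the existence of a nontrivial mountain pass solution in $\mathcal K$ at level $c$ follows immediately. Since $f$ is assumed to satisfy \ref{f1}--\ref{f5} by hypothesis, and $A$ is a domain in $\R^N$ with $N\ge 3$, and $\mathcal K$ is by construction a closed convex cone in $H^1_0(A)$ (closedness follows from the fact that the constraints $u=\mathfrak u(r,\theta)$, $u\ge 0$, $\mathfrak u_\theta\le 0$ are all preserved under $H^1$-convergence, while convexity and the cone property are evident from the linearity of these constraints), it remains only to check assumptions (i) and (ii) of Theorem~\ref{thm:abstract}.

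For assumption (i), the continuous embedding $\mathcal K\hookrightarrow\mathcal M(A)$ together with the estimate $\|u\|_{\mathcal L(A)}\le C\|u\|_{H^1_\lambda(A)}$ for $u\in\mathcal K$ is precisely the content of Proposition~\ref{prop:cont-embedding}, which I would invoke directly (in the regime $\beta<2$ this is subsumed by the Pohozaev--Trudinger--Moser estimate of Proposition~\ref{cor:Trud-Moser1}). When $A$ is unbounded I additionally need the compact embedding $\mathcal K\hookrightarrow\hookrightarrow L^q(A)$ for every $q>2$: this is where the symmetry and monotonicity encoded in $\mathcal K$ pays off, since functions in $\mathcal K$ depend only on $(r,\theta)$ and are monotone in $\theta$, which yields pointwise decay estimates à la Strauss/Ni in the radial-type variable, ruling out mass escaping to infinity; I expect this to be established in an earlier compactness lemma and would cite it. For assumption (ii), the pointwise invariance, I must show that for each $u\in\mathcal K$ the unique weak solution $v\in H^1_\lambda(A)$ of $-\Delta v+\lambda v=f(x,u)$ again lies in $\mathcal K$. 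Since $u=\mathfrak u(r,\theta)$, assumption~\ref{f6} gives $f(x,u)=\mathfrak f(r,\theta,\mathfrak u(r,\theta))$, so the right-hand side depends only on $(r,\theta)$; by uniqueness of the solution and invariance of the equation under the relevant rotations and the reflection $x_N\mapsto -x_N$, $v$ inherits the same symmetry, i.e.\ $v=\mathfrak v(r,\theta)$. Positivity $v\ge 0$ follows from the maximum principle since $f(x,u)\ge 0$ by \ref{f1} (using the odd extension and $u\ge 0$). The monotonicity $\mathfrak v_\theta\le 0$ is the delicate point: differentiating the equation formally in $\theta$ and using \ref{f7} ($\mathfrak f_\theta\le 0$) and \ref{f8} ($\partial_s f\ge 0$, so that $\partial_\theta[\mathfrak f(r,\theta,\mathfrak u)]=\mathfrak f_\theta+\partial_s\mathfrak f\cdot\mathfrak u_\theta\le 0$ on $\mathcal K$), one obtains that $w:=\mathfrak v_\theta$ is a subsolution of a linear elliptic equation with appropriate boundary behavior at $\theta=0,\pi/2$, whence $w\le 0$ by a maximum principle argument. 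The regularity assumptions \ref{f10} (with $d_M\in L^2$, $D_M$ bounded) and \ref{f7} ($f(\cdot,s)\in W^{1,1}_{\mathrm{loc}}$) are exactly what is needed to justify differentiating the equation and to ensure $w$ has enough regularity for the comparison argument; I would carry this out carefully, likely passing through an approximation of $u$ by smooth functions in $\mathcal K$ and an elliptic regularity bootstrap, then passing to the limit.

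The main obstacle is the verification of the sign condition $\mathfrak v_\theta\le 0$ in assumption (ii): unlike the positivity and symmetry, which are soft consequences of the maximum principle and uniqueness, the monotonicity requires differentiating the PDE in the angular variable and controlling the boundary terms that appear at $\theta=0$ and $\theta=\pi/2$ when writing the Laplacian in the $(r,\theta)$ coordinates (these are coordinate singularities where the metric degenerates), as well as handling the fact that $\mathfrak u_\theta$ is only a weak derivative. I would address this by first treating smooth $u$ and deriving the differential inequality for $w=\mathfrak v_\theta$ rigorously — showing in particular that the degenerate weight makes the natural boundary conditions at $\theta=0,\pi/2$ compatible with $w\le 0$ — applying the weak maximum principle in the appropriate weighted Sobolev space, and finally removing the smoothness assumption on $u$ by approximation, using \ref{f10} to get uniform estimates and stability of the construction. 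Once (i) and (ii) are in place, Theorem~\ref{thm:abstract} delivers a nontrivial $u\in\mathcal K$ solving $-\Delta u+\lambda u=f(x,u)$ with $u=0$ on $\partial A$, and the strict positivity $u>0$ in $A$ (rather than merely $u\ge 0$) follows from the strong maximum principle applied to the equation $-\Delta u+\lambda u = f(x,u)\ge 0$, since $u\not\equiv 0$; this completes the proof at the stated energy level $c$.
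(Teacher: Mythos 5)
Your overall strategy coincides with the paper's: both reduce Theorem \ref{thm:applied1} to checking hypotheses (i) and (ii) of Theorem \ref{thm:abstract} for $K=\mathcal K$, and your treatment of (i) (Proposition \ref{prop:cont-embedding} for the continuous embedding into $\mathcal M(A)$, plus an external compactness result for $L^q$, $q>2$, when $R_1=\infty$) is exactly what the paper does. The difference lies in how (ii) is organized. You propose to prove $\mathfrak v_\theta\le 0$ directly by differentiating the equation for $v$ in $\theta$ and running a maximum principle on $w=\mathfrak v_\theta$, folding the nonlinearity into that argument via $\partial_\theta[\mathfrak f(r,\theta,\mathfrak u)]=\mathfrak f_\theta+\partial_s\mathfrak f\,\mathfrak u_\theta\le 0$. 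The paper instead factors the problem: it first observes that, by \ref{f1} and \ref{f6}--\ref{f8}, the right-hand side $h:=f(x,u)$ itself belongs to the cone (same symmetry, sign, and angular monotonicity), and that \ref{f10} guarantees $h\in H^1_0(A)\cap L^\infty(A)$ when $u$ is bounded; it then invokes a purely \emph{linear} invariance lemma (Lemma \ref{le:invarianceK}, imported from earlier work) asserting that the solution of $-\Delta v+\lambda v=h$ with $h\in\mathcal K\cap L^\infty(A)$ again lies in $\mathcal K$. Your differentiation-in-$\theta$ plan is essentially the mechanism hiding inside that linear lemma, so the mathematical content is the same; the paper's factorization is cleaner because all the delicate coordinate-degeneracy issues are confined to a linear problem with a right-hand side already known to be in the cone.

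The one step where your plan is genuinely underspecified is the passage from bounded to general $u\in\mathcal K$. Hypothesis \ref{f10} gives $|\nabla_x f(x,s)|\le d_M$ and $|\partial_s f(x,s)|\le D_M$ only for $|s|\le M$, so the $H^1$-bound on $h=f(x,u)$ requires $u\in L^\infty(A)$; a general element of $\mathcal K$ need not be bounded, and "approximation by smooth functions in $\mathcal K$" does not obviously produce the needed uniform $L^\infty$ control, nor is it clear that mollification preserves membership in $\mathcal K$. The paper's device is the truncation $u_n:=\min\{u,n\}$, which manifestly stays in $\mathcal K\cap L^\infty(A)$; the resulting solutions $v_n\in\mathcal K$ are then shown to be bounded in $H^1_\lambda(A)$ because $f(x,u_n)\nearrow f(x,u)$ monotonically (this is where \ref{f8} is used a second time, to invoke monotone convergence and bound $\|f(x,u_n)\|_{L^2}$), and one passes to the weak limit using the weak closedness of $\mathcal K$. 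Without this truncation-plus-monotone-convergence step your verification of (ii) does not close for unbounded elements of the cone, which are precisely the ones that matter since the Palais--Smale limit produced by Theorem \ref{thm:abstract} is not known a priori to be bounded.
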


In particular, Theorem \ref{thm:applied1} applies to exponential-type nonlinearities of the form
\begin{equation}\label{eq:prototype}
\begin{gathered}
f(x,s)=w(x)|s|^{\beta-2} s \exp_m \left( |s|^\beta\right)\\[1.3ex]
\text{with }\beta\in (0,2) \text{ and } m\in \N\setminus\{0\}
\text{ satisfying } \beta(m+1)>2,
\end{gathered}
\end{equation}
where
\begin{equation} \label{eq:expm}
\exp_m(s):=e^s-\sum_{i=0}^{m-1} \frac{s^i}{i!}
\end{equation}
and the weight $w$ satisfies 
\begin{equation}\label{eq:hp-w}
\begin{gathered}
w\in W^{1,1}_{\mathrm{loc}}(A)\cap C(A)\cap L^\infty(A),\quad |\nabla w|\in L^2(A),\\ 
w = \mathfrak{w}(r,\theta) > 0\mbox{ and } \mathfrak{w}_\theta \le 0\quad\mbox{in } A.
\end{gathered}
\end{equation}
Notice that when $\beta\in(0,1)$ we intend $f(x,\cdot)$ extended by continuity at $s=0$.
We further observe that \eqref{eq:prototype} reduces to the nonlinearity in problem \eqref{eq:main-exp} when $w\equiv1$,  $\beta\in(1,2)$, and $m=1$.

Theorem \ref{thm:applied1} also covers possibly inhomogeneous power-type nonlinearities of the form
\begin{equation}\label{eq:prototype-power}
f(x,s)=w(x)\left(|s|^{p-2}s + |s|^{\mathfrak{p}-2}s \right)
\quad\mbox{with }2<p\leq \mathfrak{p},
\end{equation}
where $w$ satisfies \eqref{eq:hp-w}.
When $p=\mathfrak{p}$, we thus recover the result established in \cite[Theorem 1.1]{BCNW2023} for pure power-type nonlinearities (see also \cite{BCNW2024} for the case of the exterior of a ball $R_1=\infty$). Both claims are justified in Remark \ref{rem:prototype}.

We observe that, since the extra radial symmetry of the problem is not required in the abstract Theorem \ref{thm:abstract}, the existence Theorem \ref{thm:applied1} can be extended to more general domains as in \cite[Section 4]{BCNW2024}. 
On the opposite side, when both the domain and the nonlinearity are radial, namely $f$ satisfies
\begin{properties}{f'}
\setcounter{enumi}{4}
\item \label{f'6} for all $x\in A$ and $s\ge 0$, $f(x,s)=\mathfrak{f}(r, s)$,
\end{properties}
the existence of a radial solution can be obtained via standard techniques. 
In this context, we prove that the solution found in Theorem \ref{thm:applied1} breaks the symmetry, provided that the dimension $N$ or the parameter $\lambda$ or the inner radius $R_0$ are sufficiently large.

Our symmetry breaking result is the following.

\begin{theorem}\label{thm_nonradial}
Let $A$ be as in \eqref{eq:A-def}, with $0<R_0 <R_1\le \infty$ and $N\geq3$.
Let $\lambda> 0$ ($\lambda \geq0$ if $A$ is bounded) and $f$ satisfy \ref{f1}--\ref{f5} with $\Omega=A$, \ref{f'6}, and \ref{f7}--\ref{f10}. Suppose moreover that 
\begin{properties}{f}
\setcounter{enumi}{8}
\item \label{f11} for all $x\in A$, $f(x,\cdot)\in C^1(\mathbb R^+)$ and for all $\alpha>0$ there exists $C=C(\alpha)>0$ such that, for every $x\in A$, $s>0$, \smallskip
\begin{itemize}
\item[(b)] \emph{if $\Omega$ is bounded}: $|\partial_s f(x,s)|\le C + (e^{\alpha s^2} - 1)$ \smallskip
\item[(u)] \emph{if $\Omega$ is unbounded}: $|\partial_s f(x,s)|\le C|s|^{\mu-1} + (e^{\alpha s^2} - 1)$ \smallskip
\end{itemize}  	
with $\mu>1$ as in \ref{f5}-{\rm (u)}; \smallskip
\item \label{f12} letting $H=H(N,\lambda,R_0)>0$ as in \eqref{eq:def-H} and
\[
\delta > \frac{2N}{H}+1\;\mbox{ if }\lambda=0,
\quad  
\delta = \frac{2N}{H}+1\; \mbox{ if }\lambda>0,
\]
 it holds 
$$\partial_sf(x,s) s \ge \delta f(x,s) \quad  \text{ for all } x\in A \text{ and }s> 0 . $$
\end{properties}
Then problem \eqref{eq:main-appl} has a nonradial solution belonging to the set $\mathcal K$. More precisely, any solution of \eqref{eq:main-appl} belonging to $\mathcal K$, at energy level $c$ defined in \eqref{eq:c_level_def}, is nonradial.
\end{theorem}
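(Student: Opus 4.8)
The plan is to compare the energy level $c$ of the (axially symmetric, $\theta$-monotone) solution produced by Theorem \ref{thm:applied1} with the energy level $c_{\mathrm{rad}}$ of a radial mountain pass solution obtained from the natural radial variational setting; if $c < c_{\mathrm{rad}}$, then no solution in $\mathcal K$ at level $c$ can be radial. So the argument splits into three parts: (a) construct a radial mountain pass solution $u_{\mathrm{rad}}$ and characterize $c_{\mathrm{rad}}$ as a mountain pass level over radial paths; (b) produce a competitor path in $\mathcal K$ whose maximal energy is strictly below $c_{\mathrm{rad}}$, so that $c < c_{\mathrm{rad}}$; (c) conclude that any $u \in \mathcal K$ solving \eqref{eq:main-appl} at level $c$ is nonradial, and invoke Theorem \ref{thm:applied1} for existence. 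Part (a) is routine: under \ref{f1}--\ref{f5} and \ref{f'6}, the functional $J$ restricted to $H^1_{0,\mathrm{rad}}(A)\cap\mathcal M(A)$ has the mountain pass geometry and (because radial symmetry restores compactness, exactly as in the cone $\mathcal K$) a Palais--Smale condition, so one gets $u_{\mathrm{rad}}$ with $J(u_{\mathrm{rad}}) = c_{\mathrm{rad}} := \inf_{\gamma}\sup_t J(\gamma(t))$ over radial paths. Since radial functions are in particular $\theta$-independent hence in $\mathcal K$ (they satisfy $\mathfrak u_\theta \equiv 0 \le 0$ and can be taken nonnegative), we immediately get $c \le c_{\mathrm{rad}}$; the whole point is the \emph{strict} inequality.

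For the strict inequality I would exploit assumption \ref{f12}, i.e. $\partial_s f(x,s)\,s \ge \delta f(x,s)$ with $\delta$ tied to the Hardy constant $H$ through $\delta = \tfrac{2N}{H}+1$ (or $>$ if $\lambda=0$). This is the analogue of the classical symmetry-breaking mechanism: the radial solution $u_{\mathrm{rad}}$, viewed as a function of $(r,\theta)$, is a critical point of $J$ restricted to radial functions but need not be a critical point of $J$ on all of $\mathcal K$; one shows it is a strict local maximum along a suitable $\theta$-dependent deformation, i.e. one constructs a one-parameter family $u_{\mathrm{rad}} + t\,\varphi_1(\theta)\,\psi(r)$ (with $\varphi_1$ the first nonconstant Laplace--Beltrami eigenfunction on the relevant spherical cap / the eigenfunction associated with the angular part, chosen with a sign so that the perturbation keeps $\mathfrak u_\theta\le 0$ and $u\ge 0$ near $u_{\mathrm{rad}}$) such that $\frac{d^2}{dt^2}\big|_{t=0} J(u_{\mathrm{rad}}+t\varphi_1\psi) < 0$. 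The condition $\delta > \tfrac{2N}{H}+1$ is precisely what makes this second variation negative, after using the Hardy inequality to bound $\int \lambda u^2 / r^2$-type terms and the fact that the first nontrivial angular eigenvalue on the half-sphere equals $N-1$ (so the angular Rayleigh quotient contributes a factor $N$ after accounting for the $1/r^2$ weight — this is where the $2N$ in the exponent comes from). Running the mountain pass path through this descent direction produces a path in $\mathcal K$ from $0$ to a point of negative energy whose maximal energy is $< c_{\mathrm{rad}}$, hence $c < c_{\mathrm{rad}}$.

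The main obstacle, and the step needing the most care, is the second part: making the deformation both \emph{admissible in the cone} $\mathcal K$ (preserving $u\ge0$ and $\mathfrak u_\theta \le 0$ along the whole path, not just infinitesimally) and \emph{energy-decreasing}, while controlling the exponential nonlinear term. One cannot freely use $C^2$ arguments because $J$ is only $C^1$ on $V$ and the second variation involves $\partial_s f$, which under \ref{f11} grows like $e^{\alpha s^2}$; so the estimate on $\int \partial_s f(x,u_{\mathrm{rad}})(\varphi_1\psi)^2$ must be justified using $u_{\mathrm{rad}}\in L^\infty$ (elliptic regularity, since $u_{\mathrm{rad}}$ solves the equation and the right-hand side is in every $L^q$) together with the local bound in \ref{f10}/\ref{f11}. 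Once $u_{\mathrm{rad}}\in L^\infty(A)$ is in hand, all the exponential terms are bounded on the support considerations and the second-variation computation becomes a genuine quadratic-form inequality: negativity reduces to
\[
\int_A \Big( |\nabla(\varphi_1\psi)|^2 + \lambda (\varphi_1\psi)^2 - \partial_s f(x,u_{\mathrm{rad}})(\varphi_1\psi)^2 \Big)\,dx < 0,
\]
which, after separating radial and angular variables and using \ref{f12} in the form $\partial_s f \ge \delta\, f/u_{\mathrm{rad}} = \delta(-\Delta u_{\mathrm{rad}}+\lambda u_{\mathrm{rad}})/u_{\mathrm{rad}}$ together with the Hardy inequality and the angular eigenvalue $N-1$, collapses to the arithmetic condition $\delta > \tfrac{2N}{H}+1$ (resp.\ $\ge$, attained in the limit, when $\lambda>0$). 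I would then patch this local descent direction into a full mountain pass path staying in $\mathcal K$ and conclude $c < c_{\mathrm{rad}}$; combined with (a) and Theorem \ref{thm:applied1}, every solution in $\mathcal K$ at level $c$ is nonradial, which is the assertion.
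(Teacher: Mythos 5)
Your proposal identifies the right mechanism (a negative second variation of $J$ at the radial solution in an angular direction, with \ref{f12} and Hardy's inequality producing the threshold $2N/H$), but there are two substantive gaps. First, the detour through a separate radial mountain pass level $c_{\mathrm{rad}}$ is both unnecessary and incomplete: even granting $c<c_{\mathrm{rad}}$, this does not by itself exclude a \emph{radial} solution at level $c$, since a priori a radial critical point could sit below the radial mountain pass level; you would additionally need the Nehari-type fact (a consequence of \ref{f12}, cf.\ Lemma \ref{lem:g}) that every nontrivial radial critical point $u$ satisfies $J(u)=\max_{t\ge0}J(tu)\ge c_{\mathrm{rad}}$. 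The paper avoids all of this by arguing directly: assume a solution $u\in\mathcal K$ at level $c$ is radial and build an admissible path in $\Upsilon$ whose maximum is $<J(u)=c$, contradicting the definition of $c$ as an infimum. Second, and more seriously, the step you defer ("patch this local descent direction into a full mountain pass path") is precisely the crux. Knowing $J''(u)[v,v]<0$ only gives a local decrease near $u$; to conclude that the \emph{ray} $t\mapsto t(u+\bar\tau v)$ has maximal energy $<c$ you must know that its maximum is attained near $t=1$. The paper secures this with Lemma \ref{lem:g} (uniqueness of the maximum along rays, which again uses the strict monotonicity of $t\mapsto f(x,ts)/t$ coming from \ref{f12}) together with the Implicit Function Theorem applied to $\psi(\tau,t)=J'(t(u+\tau v))[u+\tau v]$, yielding $t_\phi=h(\bar\tau)=1+o(\bar\tau)$ and then $J(h(\bar\tau)(u+\bar\tau v))-J(u)=\tfrac{\bar\tau^2}{2}J''(u)[v,v]+o(\bar\tau^2)<0$. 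Without this, the argument does not close.

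Two further points on the descent direction itself. The perturbation must be taken of the product form $v=u\,\eta$ with $\eta(x)=\mathfrak y(\theta)=1-N\sin^2\theta$: the multiplicative structure $u(1+\tau\eta)$ is what preserves $u\ge0$ and $\mathfrak u_\theta\le0$ for small $\tau$ (note $\eta$ necessarily changes sign, so "choosing a sign" cannot work), and taking the radial profile equal to $\mathfrak u_{\mathrm{rad}}$ is what lets the equation \eqref{eq:u-rad-integrated} be substituted into the second variation so that \ref{f12} and Hardy collapse it to the arithmetic condition on $\delta$. Moreover your identification of the angular eigenvalue is off: the admissible angular mode compatible with axial symmetry and evenness in $x_N$ is the degree-two spherical harmonic, with eigenvalue $2N$ (see \eqref{eq:eta}), not the first nontrivial eigenvalue $N-1$; the $2N$ in $\delta=\tfrac{2N}{H}+1$ comes directly from \eqref{eq:eta-integrated}. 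Finally, the regularity needed to run the second-order Taylor expansion is not $u\in L^\infty$ plus local bounds, but the global $C^2$-regularity of $J$ on $V$ established in Lemma \ref{lem:C2} from \ref{f11} and the Orlicz embeddings.
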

The proof of Theorem \ref{thm_nonradial} proceeds by contradiction. Assuming that the mountain pass solution obtained in Theorem \ref{thm:applied1} is radial allows us to construct an admissible path along which the maximum value of the energy functional is below the mountain pass level, leading to a contradiction.

By specializing Theorem \ref{thm_nonradial} to the explicit nonlinearities \eqref{eq:prototype} and \eqref{eq:prototype-power} with a radial weight $w = \mathfrak{w}(r)$, we obtain the following symmetry breaking result for both exponential- and power-type nonlinearities.

\begin{corollary}\label{cor}
Let $A$ be as in \eqref{eq:A-def}, with $0<R_0 <R_1\le \infty$ and $N\geq3$.
Let $\lambda> 0$ ($\lambda \geq0$ if $A$ is bounded) and let $w = \mathfrak{w}(r)$ be a radial weight satisfying \eqref{eq:hp-w}.
\begin{itemize}
\item[(i)] Let $f$ be as in \eqref{eq:prototype}.  If
\[
\beta(m+1) > 2+\frac{2N}{H} \mbox{ if }\lambda=0,
\qquad
\beta(m+1) \ge 2+\frac{2N}{H} \mbox{ if }\lambda>0,
\]
with $H$ as in \eqref{eq:def-H}, then \eqref{eq:main-appl} has a nonradial solution belonging to $\mathcal K$. 
\item[(ii)] Let $f$ be as in \eqref{eq:prototype-power}.  If
\[
p > 2+\frac{2N}{H} \mbox{ if }\lambda=0,
\qquad
p \geq 2+\frac{2N}{H} \mbox{ if }\lambda>0,
\]
with $H$ as in \eqref{eq:def-H}, then \eqref{eq:main-appl} has a nonradial solution belonging to $\mathcal K$. 
\end{itemize}
\end{corollary}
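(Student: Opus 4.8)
The strategy is to verify that, when $w=\mathfrak w(r)$ is radial, the nonlinearities \eqref{eq:prototype} and \eqref{eq:prototype-power} satisfy all the hypotheses of Theorem~\ref{thm_nonradial}; the assertion of the corollary (existence of a nonradial solution in $\mathcal K$, and in fact nonradiality of every solution in $\mathcal K$ at level $c$) is then exactly its conclusion. By Remark~\ref{rem:prototype}, both families already fulfil \ref{f1}--\ref{f5} and \ref{f7}--\ref{f10}, and since $w$ is radial they satisfy \ref{f'6}, which is what Theorem~\ref{thm_nonradial} requires in place of \ref{f6}. Hence it remains only to check the two additional structural conditions \ref{f11} and \ref{f12}, and this is precisely where the quantitative assumptions on $\beta(m+1)$, respectively on $p$, come into play.

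For \ref{f11}, in the exponential case \eqref{eq:prototype} we use $\frac{d}{dt}\exp_m(t)=\exp_{m-1}(t)$ to get, for $s>0$,
\[
\partial_s f(x,s)=w(x)\Big[(\beta-1)\,s^{\beta-2}\exp_m(s^\beta)+\beta\, s^{2\beta-2}\exp_{m-1}(s^\beta)\Big].
\]
Since $\exp_j(t)=O(t^j)$ as $t\to0^+$ and $\beta(m+1)>2$, both terms are $O(s^{\beta(m+1)-2})\to0$, so $f(x,\cdot)\in C^1([0,\infty))$; the growth bound follows from $\exp_{m-1}\le\exp_m\le e^{(\cdot)}$, $w\in L^\infty(A)$, boundedness of $\partial_s f(x,\cdot)$ on compact $s$-intervals, and the fact that $\beta<2$ lets $e^{\alpha s^2}-1$ absorb any algebraic factor times $e^{s^\beta}$ for $s$ large; in the unbounded case the behaviour near $s=0$ is $O(s^{\mu-1})$ with $\mu=\beta(m+1)-1>1$, the exponent already appearing in \ref{f5}-(u). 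For the power case \eqref{eq:prototype-power} these checks are elementary, with $\mu=p-1$.

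The heart of the proof is \ref{f12}. For \eqref{eq:prototype}, writing $f(x,s)=w(x)s^{\beta-1}\exp_m(s^\beta)$ and substituting $t=s^\beta$, the inequality $\partial_s f(x,s)\,s\ge\delta f(x,s)$ is equivalent to
\[
\beta\, t\,\exp_{m-1}(t)\ \ge\ (\delta-\beta+1)\,\exp_m(t),\qquad t>0.
\]
The key elementary lemma is that $t\exp_{m-1}(t)\ge m\exp_m(t)$ for every $t\ge0$ and $m\ge1$: with $g_k(t):=t\exp_{k-1}(t)-k\exp_k(t)$ one has $g_k(0)=0$ and $g_k'(t)=g_{k-1}(t)$, while $g_0(t)=te^t\ge0$, so an induction gives $g_m\ge0$. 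Consequently the displayed inequality holds whenever $\beta m\ge\delta-\beta+1$, i.e. $\beta(m+1)\ge\delta+1$; taking $\delta=\frac{2N}{H}+1$ when $\lambda>0$, and $\delta$ slightly above $\frac{2N}{H}+1$ when $\lambda=0$ (possible precisely because the condition on $\beta(m+1)$ is strict there), this is exactly the hypothesis of part (i). For \eqref{eq:prototype-power}, since $2<p\le\mathfrak p$,
\[
\partial_s f(x,s)\,s=w(x)\big[(p-1)s^{p-1}+(\mathfrak p-1)s^{\mathfrak p-1}\big]\ \ge\ (p-1)\,f(x,s),
\]
so \ref{f12} holds as soon as $p-1\ge\delta$, that is $p\ge2+\frac{2N}{H}$ (strictly if $\lambda=0$), the hypothesis of part (ii). Once \ref{f11} and \ref{f12} are in place, Theorem~\ref{thm_nonradial} applies directly and finishes the proof. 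The only delicate point — a matter of care rather than a genuine obstacle — is that reaching the sharp thresholds forces one to use the exact inequality $t\exp_{m-1}(t)\ge m\exp_m(t)$ in case (i), not a cruder bound.
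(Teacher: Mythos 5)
Your proposal is correct and follows essentially the same route as the paper: verify \ref{f11} and \ref{f12} for the two prototype nonlinearities (the other hypotheses being covered by Remark \ref{rem:prototype} and the radiality of $w$) and then invoke Theorem \ref{thm_nonradial}. The key inequality $t\exp_{m-1}(t)\ge m\exp_m(t)$ is the same one the paper uses in \eqref{eq:estimate ofsf_sbeta<1} (there obtained directly from the series expansion, which in fact yields the slightly stronger constant $m+1$); your inductive proof of it is a minor stylistic variant, not a different argument.
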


Notice that Corollary \ref{cor} (i) reduces to Theorem \ref{thm:main-exp} when $w\equiv 1$, $\beta\in(1,2)$, and $m=1$, while case (ii) when $p=\mathfrak{p}$ recovers \cite[Theorem 1.1]{BCNW2023} (see also \cite{BCNW2024} when $R_1=\infty$). 

The paper is structured as follows. 
In Section \ref{sec:abstract_theorem} we recall some properties of the Orlicz spaces $\mathcal{L}(\Omega)$, $\mathcal{M}(\Omega)$ and prove,  via Szulkin's theory, the abstract existence result Theorem \ref{thm:abstract}. Section \ref{sec:proof-applied-thm} is devoted to its application, namely Theorem \ref{thm:applied1}, which in turn requires the proof of a Pohozaev-Trudinger-Moser inequality in the cone $\mathcal K$, see Proposition \ref{cor:Trud-Moser1}.
Finally, in Section \ref{sec:case-radial}, we establish the occurrence of symmetry breaking in the radial setting, thereby proving Theorem \ref{thm_nonradial}.
As a particular case, this also leads to Corollary \ref{cor} and Theorem \ref{thm:main-exp}.

\section{Abstract result}\label{sec:abstract_theorem}
\subsection{Preliminaries on Orlicz spaces}
We start by recalling the known property that $\mathcal L(\Omega)$ is continuously embedded in $L^q(\Omega)$ for every $q\in[2,\infty)$. 

\begin{lemma}[{\cite[Lemma 11]{RufTerraneo}}]\label{lem:L-embedded}
For every $q\in [2,\infty)$ ($q\in [1,\infty)$ if $\Omega$ is bounded) there exists a constant $C=C(q)>0$ such that
\begin{equation}\label{eq:L-embedded}
\|u\|_{L^q(\Omega)} \leq C \|u\|_{\mathcal{L}(\Omega)}
\quad\text{for every } u\in\mathcal{L}(\Omega).
\end{equation}
\end{lemma}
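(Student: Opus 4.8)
The plan is to establish the embedding $\mathcal{L}(\Omega) \hookrightarrow L^q(\Omega)$ by a direct pointwise comparison between the power function $|u|^q$ and the Young function $e^{u^2}-1$, then normalize using the Luxemburg norm. Recall that the defining convexity/growth property of the exponential Young function gives, for any fixed $q \geq 1$, a constant $c_q > 0$ such that
\begin{equation*}
t^q \leq c_q \left( e^{t^2} - 1 \right) \quad \text{for all } t \geq 0.
\end{equation*}
This is elementary: the ratio $t^q/(e^{t^2}-1)$ is continuous on $(0,\infty)$, tends to $0$ as $t\to\infty$ (exponential beats polynomial), and near $t=0$ behaves like $t^q/t^2 = t^{q-2}$, which is bounded when $q\geq 2$; hence the supremum $c_q$ is finite. (When $\Omega$ is bounded, one does not need control near $0$ at all, since $L^q(\Omega) \hookrightarrow L^1(\Omega)$ and more: one can replace $e^{t^2}-1$ by $\frac12(e^{2t^2}-1)$ or simply absorb the small-$t$ discrepancy using $|\Omega| < \infty$, which is why the range extends to all $q \in [1,\infty)$.)

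First I would fix $u \in \mathcal{L}(\Omega)$, $u \neq 0$, and set $k := \|u\|_{\mathcal{L}(\Omega)}$, so that by definition of the Luxemburg norm and the monotone convergence / closedness of the sublevel set one has
\begin{equation*}
\int_\Omega \left( e^{(u/k)^2} - 1 \right) dx \leq 1.
\end{equation*}
Applying the pointwise inequality with $t = |u(x)|/k$ and integrating yields
\begin{equation*}
\int_\Omega \frac{|u|^q}{k^q}\, dx \leq c_q \int_\Omega \left( e^{(u/k)^2} - 1 \right) dx \leq c_q,
\end{equation*}
so that $\|u\|_{L^q(\Omega)} \leq c_q^{1/q}\, k = c_q^{1/q}\, \|u\|_{\mathcal{L}(\Omega)}$, which is the claim with $C = C(q) = c_q^{1/q}$. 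For the bounded case and $q \in [1,2)$, I would first use the above with exponent $2$ to get $u \in L^2(\Omega)$ with norm controlled by $\|u\|_{\mathcal{L}(\Omega)}$, then use Hölder's inequality $\|u\|_{L^q(\Omega)} \leq |\Omega|^{1/q - 1/2} \|u\|_{L^2(\Omega)}$ to descend to smaller exponents, at the cost of a constant depending on $|\Omega|$.

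There is no serious obstacle here — the statement is classical (it is quoted from \cite[Lemma 11]{RufTerraneo}) and the only point requiring a moment's care is the behaviour of $t^q/(e^{t^2}-1)$ near $t = 0$, which is precisely what forces the restriction $q \geq 2$ on unbounded domains: for $q < 2$ the ratio blows up like $t^{q-2}$, and indeed on, say, $\Omega = \mathbb{R}^N$ one can easily build functions in $\mathcal{L}$ that fail to lie in $L^1$, so the restriction is sharp. One should also remark that the same argument, applied verbatim, shows $\mathcal{M}(\Omega) \hookrightarrow L^q(\Omega)$ for the same range of $q$ since $\mathcal{M}(\Omega) \subset \mathcal{L}(\Omega)$ with the same norm; this is the form actually invoked in Remark \ref{rem:compact-emb-Om-bdd}.
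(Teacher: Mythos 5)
Your proof is correct, and the paper itself offers no proof to compare against: Lemma \ref{lem:L-embedded} is simply quoted from \cite[Lemma 11]{RufTerraneo}. Your argument --- the pointwise bound $t^q\le c_q(e^{t^2}-1)$ for $q\ge 2$, integration after normalizing by the Luxemburg norm (using that the modular at $k=\|u\|_{\mathcal L(\Omega)}$ is at most $1$), and H\"older on bounded domains to reach $q\in[1,2)$ --- is the standard proof of this classical embedding, and your observation that the restriction $q\ge 2$ is sharp on unbounded domains is accurate.
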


It is well known that the $\mathcal{L}(\Omega)$-norm convergence implies the convergence in mean, see \cite[\S 8.13]{AF}.  For the sake of clarity,  we report here the proof of this property, together with a useful further consequence.

\begin{lemma}\label{lem:auxiliary}
Let $u\in \mathcal{M}(\Omega)$ and $(u_n)\subset \mathcal{M}(\Omega)$ be such that
\[
\lim_{n\to\infty} \|u_n-u\|_{\mathcal{L}(\Omega)} =0.
\]
Then, for every $\alpha>0$,
\begin{equation}\label{eq:conv-mean}
\lim_{n\to\infty} \int_\Omega \left( e^{\alpha (u_n-u)^2} -1 \right)\,dx=0.
\end{equation}
This in turn further implies that for every $\alpha>0$,
\begin{equation}\label{eq:conv-mod}
\lim_{n\to\infty} \int_\Omega \left( e^{\alpha u_n^2} -1 \right)\,dx = \int_\Omega \left( e^{\alpha u^2} -1 \right)\,dx.
\end{equation}
\end{lemma}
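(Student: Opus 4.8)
The statement is Lemma \ref{lem:auxiliary}, asserting two things: (a) $\mathcal L$-norm convergence $u_n\to u$ implies convergence in mean \eqref{eq:conv-mean} for every $\alpha>0$; and (b) \eqref{eq:conv-mean} implies the convergence of integrals \eqref{eq:conv-mod}. I would treat the two parts separately.

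For part (a), the plan is to exploit homogeneity of the Orlicz norm together with the defining inequality $\int_\Omega (e^{(v/\|v\|_{\mathcal L})^2}-1)\,dx\le 1$. Set $v_n:=u_n-u$ and $k_n:=\|v_n\|_{\mathcal L(\Omega)}\to 0$. Fix $\alpha>0$. For $n$ large enough that $k_n$ is small, I want to compare $e^{\alpha v_n^2}-1$ to $e^{(v_n/k_n)^2}-1$. The key elementary fact is that if $0<k_n\le (2\alpha)^{-1/2}$, say, then $\alpha v_n^2 \le \tfrac12 (v_n/k_n)^2$, and for the function $g(t)=e^t-1$ one has $g(t/2)\le g(t)^{1/2}$... actually more directly $g(\beta t)\le \beta\, g(t)$ for $\beta\in[0,1]$ by convexity of $g$ with $g(0)=0$. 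Hence $e^{\alpha v_n^2}-1 \le 2\alpha k_n^2\,(e^{(v_n/k_n)^2}-1)$ pointwise, and integrating gives $\int_\Omega(e^{\alpha v_n^2}-1)\,dx\le 2\alpha k_n^2\cdot 1\to 0$. (One should note $v_n\in\mathcal M(\Omega)$, so all integrals are finite and the manipulations are legitimate.) This is the clean, short route; the only mild subtlety is making sure the convexity inequality $g(\beta t)\le\beta g(t)$ is applied with $\beta\le 1$, which is guaranteed once $k_n$ is small relative to $\alpha$.

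For part (b), I would use \eqref{eq:conv-mean} as a hypothesis and derive \eqref{eq:conv-mod}. Write $u_n^2 = (u_n-u+u)^2$. The standard trick is Young's inequality: for any $\varepsilon>0$, $u_n^2\le (1+\varepsilon)(u_n-u)^2 + (1+1/\varepsilon)u^2$, hence $e^{\alpha u_n^2}-1\le e^{\alpha(1+\varepsilon)(u_n-u)^2 + \alpha(1+1/\varepsilon)u^2}-1$. To disentangle the product of exponentials I would apply the elementary bound $e^{a+b}-1\le \tfrac12(e^{2a}-1)+\tfrac12(e^{2b}-1)+\tfrac12(e^{2a}-1)+\ldots$ — more carefully, $e^{a+b}-1 \le \tfrac12(e^{2a}-1)+\tfrac12(e^{2b}-1)$ fails in general, so instead use $e^{a+b}\le \tfrac12 e^{2a}+\tfrac12 e^{2b}$ (AM-GM/convexity), giving $e^{a+b}-1\le\tfrac12(e^{2a}-1)+\tfrac12(e^{2b}-1)$, which does hold. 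Thus $e^{\alpha u_n^2}-1\le \tfrac12\big(e^{2\alpha(1+\varepsilon)(u_n-u)^2}-1\big)+\tfrac12\big(e^{2\alpha(1+1/\varepsilon)u^2}-1\big)$. The first term tends to $0$ by \eqref{eq:conv-mean} (applied with $\alpha'=2\alpha(1+\varepsilon)$); the second is a fixed integrable function since $u\in\mathcal M(\Omega)$. Together with the reverse-direction estimate (swapping roles, or just using $u^2\le(1+\varepsilon)(u_n-u)^2+(1+1/\varepsilon)u_n^2$ to bound the other difference), a generalized dominated convergence / equi-integrability argument — or passing along a subsequence with pointwise a.e.\ convergence, which \eqref{eq:conv-mean} furnishes up to subsequences — yields $\int_\Omega(e^{\alpha u_n^2}-1)\,dx\to\int_\Omega(e^{\alpha u^2}-1)\,dx$. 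To be rigorous I would argue by subsequences: from any subsequence extract a further one along which $u_n-u\to 0$ a.e.\ (possible since \eqref{eq:conv-mean} gives $L^1$, hence a.e., convergence of $(u_n-u)^2$ up to subsequences), then apply the dominated convergence theorem with the dominating function $\tfrac12(e^{2\alpha(1+\varepsilon)(u_n-u)^2}-1)+\tfrac12(e^{2\alpha(1+1/\varepsilon)u^2}-1)$, whose integral converges to $\tfrac12\int_\Omega(e^{2\alpha(1+1/\varepsilon)u^2}-1)\,dx$ — invoking the generalized DCT (Lebesgue–Vitali) where the dominating functions vary but converge in $L^1$.

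\textbf{Expected main obstacle.} Part (a) is essentially immediate once the convexity/homogeneity bound is set up correctly, so the real care goes into part (b): cleanly splitting $e^{\alpha u_n^2}$ without losing integrability, and justifying the passage to the limit. The cleanest formulation is the generalized dominated convergence theorem (dominating functions converging in $L^1$), combined with a subsequence extraction to get a.e.\ convergence; the temptation to write a one-line $\varepsilon$-$\delta$ bound directly is what could go wrong, because the naive splitting inequalities for $e^{a+b}-1$ must be the correct convex ones ($e^{a+b}\le\tfrac12 e^{2a}+\tfrac12 e^{2b}$) rather than ad hoc ones.
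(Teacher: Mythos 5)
Your proposal is correct and follows essentially the same route as the paper: part (a) is the paper's series/convexity inequality $e^{s^2}-1\le\varepsilon\,(e^{(s/\varepsilon)^2}-1)$ combined with the Luxemburg-norm normalization (the paper takes $\varepsilon=\sqrt{\alpha}\,\|u_n-u\|_{\mathcal L(\Omega)}$, you take $\beta=\alpha k_n^2$ — same idea), and part (b) is exactly the paper's splitting $e^{a+b}-1\le\tfrac12(e^{2a}-1)+\tfrac12(e^{2b}-1)$ (with $\varepsilon=1$ in the Young step) followed by the generalized dominated convergence theorem with dominating functions converging in $L^1$.
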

\begin{proof}
Let us recall the known inequality
\[
e^{s^2}-1 \leq \varepsilon \left( e^{\left(\frac{s}{\varepsilon}\right)^2} -1 \right)
\quad \text{for all }\varepsilon\in(0,1]\text{ and }s\geq0,
\]
which can be easily derived expanding in series the exponential functions.
As, for every $\alpha>0$,  $\sqrt{\alpha}\|u_n-u\|_{\mathcal{L}(\Omega)}<1$ for $n$ large, we can apply the previous inequality with $\varepsilon=\sqrt{\alpha}\|u_n-u\|_{\mathcal{L}(\Omega)}$, to obtain, for $n$ large enough,
\[
\begin{aligned}
\int_\Omega \left( e^{\alpha (u_n-u)^2} -1 \right)\,dx
&\leq \sqrt{\alpha}\|u_n-u\|_{\mathcal{L}(\Omega)} \int_\Omega \left( e^{\frac{\alpha (u_n-u)^2}{\alpha\|u_n-u\|^2_{\mathcal{L}(\Omega)}}} -1 \right)\,dx \\
&\leq \sqrt{\alpha}\|u_n-u\|_{\mathcal{L}(\Omega)},
\end{aligned}
\]
where in the last line we have used the definition of $\mathcal{L}(\Omega)$-norm. Then \eqref{eq:conv-mean} follows. As for \eqref{eq:conv-mod}, we observe that 
$e^{\alpha u_n^2} -1\to e^{\alpha u^2} -1$ a.e. in $\Omega$, moreover
\[
e^{\alpha u_n^2} -1\le e^{2\alpha (u_n-u)^2}e^{2\alpha u^2} -1 \le \frac{e^{4\alpha (u_n-u)^2}-1}{2}+\frac{e^{4\alpha u^2} -1}{2}=:g_n.
\]
Set $g:=\frac{e^{4\alpha u^2} -1}{2}$ and observe that $g_n,\,g\in L^1(\Omega)$, since $u_n,\,u\in \mathcal M(\Omega)$, and that $g_n\to g$ a.e. in $\Omega$. By \eqref{eq:conv-mean}, it also holds that $\int_\Omega g_n\to \int_\Omega g$, hence, by the General Dominated Convergence Theorem (see for example\cite[Section 4.4]{Royden}), \eqref{eq:conv-mod} is proved. 
\end{proof}

\subsection{Properties of the nonlinear term}
We collect here some useful observations and inequalities that descend from the assumptions on $f$. We begin by commenting on the different hypotheses in the bounded and unbounded case. 

\begin{remark}\label{rem:f5u-b}
The second condition in \ref{f3}-(u), that is,
\begin{equation}
\label{f4new}
F(x,s)\ge c_1(x)|s|^\sigma-c_2(x)\quad	
\mbox{ for all }s\in \mathbb R\mbox{ and } x\in\Omega,
\end{equation}
also holds when $\Omega$ is bounded, with constant $c_1>0$ and $c_2\in\mathbb{R}$. Indeed, in this case, it descends from \ref{f3}-(b) and \ref{f1}.
Hence, in what follows, we will refer to \eqref{f4new} for both bounded and unbounded domains.
Condition \ref{f5}-(u), required for the unbounded case, is stronger than the corresponding condition \ref{f5}-(b) for the bounded case. Indeed,  suppose that $\Omega$ is bounded and that \ref{f5}-(u) holds, then there exist $\mu>1$ and $C>0$ such that for every $x\in\Omega$
\begin{equation}\label{eq:fsmu}
|f(x,s)|\le C|s|^{\mu}\quad\mbox{ for $|s|$ small}.
\end{equation}
Then, being $\mu>1$,
\[
\limsup_{s\to 0}\frac{2F(x,s)}{s^2}=\limsup_{s\to 0}\frac{2\int_0^s f(x,\tau)d\tau}{s^2}\le \limsup_{s\to 0}\frac{2C}{\mu+1}|s|^{\mu-1}= 0 <\lambda_1+\lambda,
\]
with $C>0$ as in \eqref{eq:fsmu}, so that \ref{f5}-(b) holds.

Finally, we notice that if $\lim_{s\to 0}|f(x,s)|/|s|^{\sigma-1}$ exists,  \ref{f3}-(u) implies \ref{f5}-(u) with $\mu=\sigma-1$ by L'H\^opital's rule.
\end{remark}

\medskip
Next, we derive some consequences of assumptions \ref{f1}-\ref{f5} that will be useful in the remaining part of the section.

We first observe that, by \ref{f1}, 
\[
F(x,s)=\int_{\min\{0,s\}}^{\max\{0,s\}}|f(x,\tau)|d\tau\ge 0\quad\mbox{for all $(x,s)\in\Omega\times\mathbb R$.}
\]

Moreover, for every $\alpha>0$ and $\tilde{\mu}\in[1,\mu]$ the following inequalities hold
\begin{equation}\label{eq:asterisco_mu_nu}
\begin{aligned}
\mbox{if $\Omega$ is bounded:}\quad &|f(x,s)|\le C+|s|^{\nu}(e^{\alpha s^2}-1) \\
\mbox{if $\Omega$ is unbounded:}\quad &|f(x,s)|\le C|s|^{\tilde\mu}+|s|^{\nu}(e^{\alpha s^2}-1),
\end{aligned}
\end{equation}
for all $(x,s)\in \Omega\times\mathbb R$ and $\nu\ge0$, with $C=C(\alpha)>0$ in the first line and $C=C(\alpha, \tilde{\mu})>0$ in the second line.

Indeed, by \ref{f2}, for every $\alpha>0$ there exists $\widehat M =\widehat M(\alpha)>1$ such that 
\begin{equation}\label{eq:at_infinity}
|f(x,s)|\le e^{\alpha s^2}-1\le |s|^{\nu}(e^{\alpha s^2}-1) \quad\mbox{ for all $x\in\Omega$, $|s|\ge \widehat M$, and $\nu\ge 0$}.
\end{equation}
Therefore, combining the previous estimate with \ref{f1}, we have that for every $\alpha>0$ there exists $C=C(\alpha)>0$ such that 
\[
|f(x,s)|\le C+|s|^{\nu}(e^{\alpha s^2}-1)\quad\mbox{for all $(x,s)\in\Omega\times\mathbb R$ and $\nu\ge 0$}.
\]

If $\Omega$ is unbounded, the above bound will not be helpful for our purposes. However, in this case, by \ref{f5}-(u), there exists $\delta>0$ so small that  
\begin{equation*}\label{eq:near_zero}
|f(x,s)|\le C |s|^{\mu}\le C |s|^{\tilde{\mu}}
\quad\mbox{ for all $x\in \Omega$, $|s|\le \delta$ and $\tilde \mu\in [1,\mu]$}
\end{equation*}
and by \ref{f1} there exists $C=C(\alpha,\tilde\mu)>0$ such that
\begin{equation*}\label{eq:in_the_middle}
|f(x,s)|\le C_{\widehat M} \frac{\delta^{\tilde{\mu}}}{\delta^{\tilde{\mu}}}\le C |s|^{\tilde{\mu}} \quad\mbox{for all $x\in\Omega$ and  $\delta<|s|<\widehat M$.}
\end{equation*}
Combining the previous two estimates with \eqref{eq:at_infinity}, in the unbounded case, 
we get that for every $\alpha>0$ and $\tilde{\mu}\in[1,\mu]$ there exists $C=C(\alpha,\tilde{\mu})>0$ such that 
\eqref{eq:asterisco_mu_nu} holds for all $(x,s)\in \Omega\times\mathbb R$ and $\nu\ge0$.

Finally, for all $\alpha>0$, $\tilde{\mu}\in [1,\mu]$, $\nu\ge 0$, and $\varepsilon>0$ sufficiently small 
\begin{equation}\label{eq:conseq-hyp-on-f}
\begin{aligned}
\mbox{if $\Omega$ is bounded:}&\quad F(x,s)\le \frac{1-\varepsilon}{2}(\lambda_1+\lambda)s^2+C|s|^{\nu+1}(e^{\alpha s^2}-1)\\
\mbox{if $\Omega$ is unbounded:}&\quad F(x,s)\le C|s|^{\tilde{\mu}+1}+|s|^{\nu+1}(e^{\alpha s^2}-1),
\end{aligned}
\end{equation}
for all $(x,s)\in \Omega\times\mathbb R$, with $C=C(\alpha,\nu)>0$ in the bounded case and $C=C(\alpha,\tilde{\mu})>0$ in the unbounded case.

Indeed, in the unbounded case, \eqref{eq:conseq-hyp-on-f} is obtained multiplying \eqref{eq:asterisco_mu_nu} by $|s|$ and using \ref{f3}-(u).
In the bounded case, for every $\alpha>0$ and $\nu\ge 0$,
multiplying \eqref{eq:at_infinity} by $|s|\ge \max\{\widehat M,\overline{M}\}=:\widetilde{M}=\widetilde{M}(\alpha)$ and using \ref{f3}-(b), we obtain 
\[
F(x,s)\le \frac1\sigma |s|^{\nu+1}(e^{\alpha s^2}-1)\quad\mbox{ for all $x\in\Omega$ and $|s|\ge \widetilde{M}$.} 
\]
By \ref{f5}-(b) there exist $\delta>0$ and $\varepsilon \in(0,1)$ such that 
\begin{equation*}\label{eq:Fnearzero}
F(x,s)\le \frac{1-\varepsilon}{2}(\lambda_1+\lambda)s^2\quad\mbox{for all $x\in \Omega$ and $|s|\le \delta$}.
\end{equation*} 
Moreover, by \ref{f1}, we have for all $x\in\Omega$ and  $\delta<|s|<\widetilde M$
\begin{equation*}\label{eq:in_the_middle-b}
F(x,s)=\int_{\min\{0,s\}}^{\max\{0,s\}}|f(x,\tau)|d\tau\le C_{\widetilde M} |s|\le \frac{C_{\widetilde M} \widetilde M}{\delta^{\nu+1}(e^{\alpha \delta^2}-1)}|s|^{\nu+1}(e^{\alpha s^2}-1).
\end{equation*}
Combining together the last three estimates, and recalling that $\sigma>2$, we get \eqref{eq:conseq-hyp-on-f} for $\Omega$ bounded.

The following convergence result will ensure the $C^1$-regularity of the nonlinear term in the energy functional.

\begin{lemma}\label{lem:convergenceOrlicz}
Under the assumptions of Theorem \ref{thm:abstract}, let $(u_n)\subset \mathcal M(\Omega)$ be a sequence such that $u_n \to u$ in $\mathcal L(\Omega)$ for some $u\in \mathcal M(\Omega)$. Then 
\[
\lim_{n\to\infty}\int_\Omega(f(x,u_n)-f(x,u))^2 \, dx= 0.
\]
\end{lemma}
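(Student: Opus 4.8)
\textbf{Proof strategy for Lemma \ref{lem:convergenceOrlicz}.}
The plan is to show that $(f(x,u_n)-f(x,u))^2 \to 0$ in $L^1(\Omega)$ by combining pointwise convergence (after passing to a subsequence) with a domination argument based on the growth estimates \eqref{eq:asterisco_mu_nu} and the crucial convergence-in-mean property \eqref{eq:conv-mean} from Lemma \ref{lem:auxiliary}. First I would observe that $u_n \to u$ in $\mathcal{L}(\Omega)$ implies, by Lemma \ref{lem:L-embedded}, that $u_n \to u$ in $L^q(\Omega)$ for every $q \in [2,\infty)$, hence (up to a subsequence, which suffices since the limit is independent of the subsequence) $u_n \to u$ a.e. in $\Omega$. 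By continuity of $f$ (assumption \ref{f1}), $(f(x,u_n)-f(x,u))^2 \to 0$ a.e. in $\Omega$.

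Next I would produce an $L^1$-dominating sequence. Using \eqref{eq:asterisco_mu_nu} with a fixed $\nu=0$ and, say, $\alpha=1$, one bounds $|f(x,u_n)-f(x,u)|^2$ by something like
\[
|f(x,u_n)-f(x,u)|^2 \le 2\bigl(|f(x,u_n)|^2+|f(x,u)|^2\bigr)
\le C\bigl(1+|u_n|^{2\tilde\mu}+|u|^{2\tilde\mu}\bigr) + C\bigl((e^{u_n^2}-1)^2 + (e^{u^2}-1)^2\bigr)
\]
in the unbounded case (drop the power terms and replace by a constant in the bounded case). The power-type terms are controlled in $L^1$ since $u_n \to u$ in every $L^q$, $q \ge 2$, with $2\tilde\mu \ge 2$. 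For the exponential terms I would write $(e^{s^2}-1)^2 \le e^{2s^2}-1$ and then, using $u_n^2 \le 2(u_n-u)^2 + 2u^2$, estimate $e^{2u_n^2}-1 \le \tfrac12(e^{4(u_n-u)^2}-1) + \tfrac12(e^{4u^2}-1)$, exactly as in the proof of Lemma \ref{lem:auxiliary}. Set
\[
g_n := C\bigl(1+|u_n|^{2\tilde\mu}+|u|^{2\tilde\mu}\bigr) + C\Bigl(\tfrac12(e^{4(u_n-u)^2}-1) + \tfrac32(e^{4u^2}-1)\Bigr),
\]
with limit $g := C(1+2|u|^{2\tilde\mu}) + 2C(e^{4u^2}-1)$. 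Then $g_n, g \in L^1(\Omega)$ (the power parts because $u_n,u \in L^{2\tilde\mu}$ via Lemma \ref{lem:L-embedded}, the exponential parts because $u_n, u \in \mathcal{M}(\Omega)$), $g_n \to g$ a.e., and $\int_\Omega g_n \to \int_\Omega g$: the exponential terms converge by \eqref{eq:conv-mean} applied with $\alpha = 4$, and the power terms converge by strong $L^{2\tilde\mu}$-convergence. Moreover $(f(x,u_n)-f(x,u))^2 \le g_n$ a.e.

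Finally I would invoke the General Dominated Convergence Theorem (as cited in the proof of Lemma \ref{lem:auxiliary}, \cite[Section 4.4]{Royden}): since $(f(x,u_n)-f(x,u))^2 \to 0$ a.e., $0 \le (f(x,u_n)-f(x,u))^2 \le g_n$, $g_n \to g$ a.e., and $\int g_n \to \int g$, we conclude $\int_\Omega (f(x,u_n)-f(x,u))^2\,dx \to 0$. Since every subsequence of the original sequence has a further subsequence along which this holds, the full sequence converges, proving the claim. The main obstacle is the exponential term: one cannot dominate $e^{u_n^2}$ by a single fixed $L^1$ function uniformly in $n$, so the argument genuinely needs the General (variable-majorant) Dominated Convergence Theorem together with the mean-convergence \eqref{eq:conv-mean} — this is precisely why the $\mathcal{L}(\Omega)$-norm convergence hypothesis (rather than mere a.e. or weak convergence) is essential.
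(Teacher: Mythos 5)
Your proposal is correct and follows essentially the same route as the paper: a.e.\ convergence of $f(x,u_n)$ after passing to a subsequence, domination via \eqref{eq:asterisco_mu_nu}, and the General Dominated Convergence Theorem with a variable majorant whose integrals converge thanks to the $\mathcal L(\Omega)$-convergence (the paper invokes \eqref{eq:conv-mod}, which is itself derived from \eqref{eq:conv-mean} by exactly the splitting $e^{\alpha u_n^2}-1\le \tfrac12(e^{2\alpha(u_n-u)^2}-1)+\tfrac12(e^{2\alpha u^2}-1)$ that you write out). The only slip is the additive ``$1$'' in your unbounded-case bound for $|f(x,u_n)-f(x,u)|^2$ and hence in $g_n$: a constant is not in $L^1(\Omega)$ when $\Omega$ is unbounded, but the fix is immediate since the unbounded-case estimate in \eqref{eq:asterisco_mu_nu} carries no additive constant, so the term should simply be dropped. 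Note also that the paper actually proves the stronger statement of $L^q$-convergence of $f(\cdot,u_n)$ for all $q\ge 1$ (bounded case) or $q\ge\max\{1,2/\mu\}$ (unbounded case), of which the lemma is the case $q=2$; your argument specializes to $q=2$ but would extend verbatim.
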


\begin{proof}
We will prove the stronger thesis
\begin{equation}\label{eq:strongerth}
\lim_{n\to\infty}\|f(\cdot ,u_n)-f(\cdot,u)\|_{L^q(\Omega)}= 0\, \mbox{ if }
q \ge 
\begin{cases} 
1&\mbox{if $\Omega$ is bounded}\\
\max\left\{ 1,\frac{2}{\mu} \right\} &\mbox{ if $\Omega$ is unbounded,}
\end{cases}
\end{equation}
with $\mu>1$ as in \ref{f5}-(u).

Since $\|u_n-u\|_{\mathcal L(\Omega)}\to 0$, Lemma \ref{lem:L-embedded} yields that
$u_n\to u$  a.e. in $\Omega$ up to a subsequence. Moreover, by \ref{f1},  $f(x,u_n)\to f(x,u)$ a.e. in $\Omega$.
Now, if $\Omega$ is unbounded, for $q$ as in \eqref{eq:strongerth}, Lemma \ref{lem:L-embedded} yields also that 
\begin{equation}\label{eq:untouinLqmu}
u_n \to u \quad\mbox{ in }L^{q\mu}(\Omega) 
\end{equation}
and, in virtue of \eqref{eq:asterisco_mu_nu} with $\alpha>0$, $\tilde\mu=\mu$ and $\nu=0$,  we have
\[
\begin{aligned}
|f(x,u_n)-f(x,u)|^q
&\leq 2^{q-1}\left[ \: |f(x,u_n)|^q+|f(x,u)|^q \: \right]\\
&\leq 2^{q-1} \left[(C|u_n|^\mu+ e^{\alpha u_n^2}-1)^q+(C|u|^\mu+e^{\alpha u^2}-1)^q\right]\\
&\leq \bar C \left[|u_n|^{q\mu}+ |u|^{q\mu} + (e^{q\alpha u_n^2}-1)+(e^{q\alpha u^2}-1)\right] =: g_n,\\
\end{aligned}
\]
where we have used the convexity of the function $|\cdot|^q$ and the inequality
\begin{equation}\label{eq:favourite_ineq}
(e^s-1)^t \le e^{t s}-1\quad\mbox{ for every $s\ge0$ and $t\ge 1$.}
\end{equation} 
Notice that the positive constant $\bar C$ is in the definition of $g_n$ is independent of $n$. Let
\[
g:=2 \bar C \left[|u|^{q \mu} + (e^{q\alpha u^2}-1) \right].
\]
We have that $g, g_n \in L^1(\Omega)$ for every $n$; moreover, by the previous considerations, $g_n\to g$ a.e. in $\Omega$. In addition, by \eqref{eq:untouinLqmu} and \eqref{eq:conv-mod}, $\int_\Omega g_n\to\int_\Omega g$. Arguing similarly for $\Omega$ bounded, we have for $q\ge1$
\[
\begin{aligned}
|f(x,u_n)-f(x,u)|^q 
&\leq \bar C \left[2 + (e^{q\alpha u_n^2}-1) + (e^{q\alpha u^2}-1)\right]=: h_n,
\end{aligned}
\]
where again $\bar C$ denotes a positive constant independent of $n$, and setting
\[
h:=2\bar C e^{q\alpha u^2},
\]
we have that $h, h_n \in L^1(\Omega)$ for every $n$ and that $h_n\to h$ a.e. in $\Omega$ and $\int_\Omega h_n\to \int_\Omega h$.
As a consequence, in both cases, the General Dominated Convergence Theorem applies, thus providing   the stronger thesis.
\end{proof}

In the following lemma we collect some convergence results for the nonlinear term that will be useful in the subsequent part, mainly to prove compactness.

\begin{lemma}\label{lem:convergences}
Under the assumptions of Theorem \ref{thm:abstract}, let $(u_n)\subset K$ be a sequence such that $u_n\rightharpoonup u$ in $H^1_0(\Omega)$ for some $u\in K$. Then 
\begin{equation}\label{eq:boundedness-f}
(f(x,u_n)) \mbox{ is bounded in $L^q(\Omega)$ if }
q\ge 
\begin{cases} 
1&\mbox{if $\Omega$ is bounded}\\
\max\left\{1,\frac{2}{\mu}\right\} &\mbox{if $\Omega$ is unbounded.}
\end{cases}
\end{equation}
As a consequence, the following limits hold true: 
\begin{equation}\label{eq:fun_to_0}
\lim_{n\to \infty}\int_\Omega f(x,u_n)(u_n-u)\, dx= 0;
\end{equation}
\begin{equation}\label{eq:fun-to-fu_Lp}
\lim_{n\to\infty}\int_\Omega(f(x,u_n)-f(x,u))^2 \, dx= 0;
\end{equation} 
\begin{equation}\label{eq:convergence_fun-un}
\lim_{n\to\infty}\int_\Omega f(x,u_n)u_n \, dx = \int_\Omega f(x,u)u\,dx.
\end{equation}
\end{lemma}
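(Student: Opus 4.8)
The plan is to prove the four assertions in the order in which they are stated, since each one feeds into the next. Set $M:=\sup_n\|u_n\|_{H^1_\lambda(\Omega)}<\infty$ (finite because $u_n\rightharpoonup u$ in $H^1_0(\Omega)$); by the embedding (i) of Theorem~\ref{thm:abstract} we then have $\sup_n\|u_n\|_{\mathcal L(\Omega)}\le CM=:M'$, and by (i) together with Remark~\ref{rem:compact-emb-Om-bdd} we have $u_n\to u$ in $L^q(\Omega)$ for every $q>2$ (for every $q\ge1$ if $\Omega$ is bounded), hence $u_n\to u$ a.e.\ in $\Omega$ along a subsequence, and by \ref{f1} also $f(x,u_n)\to f(x,u)$ a.e.\ along that subsequence. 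Fix once and for all an $\alpha>0$ so small (depending only on $M'$) that $t\alpha(M'+1)^2\le1$ for all the finitely many values of $t$ used below; then, by the definition of the Luxemburg norm and $\|u_n\|_{\mathcal L(\Omega)}\le M'<M'+1$, one has $\int_\Omega(e^{t\alpha u_n^2}-1)\,dx\le\int_\Omega(e^{u_n^2/(M'+1)^2}-1)\,dx\le1$ for each such $t$. For \eqref{eq:boundedness-f}, fix $q$ in the stated range and use \eqref{eq:asterisco_mu_nu} (with $\tilde\mu=\mu$, $\nu=0$) and \eqref{eq:favourite_ineq}: in the bounded case $|f(x,u_n)|^q\le\bar C(1+(e^{q\alpha u_n^2}-1))$, whose integral is $\le\bar C(|\Omega|+1)$; in the unbounded case $|f(x,u_n)|^q\le\bar C(|u_n|^{q\mu}+(e^{q\alpha u_n^2}-1))$, and since $q\mu\ge2$, Lemma~\ref{lem:L-embedded} gives $\|u_n\|_{L^{q\mu}(\Omega)}\le C\|u_n\|_{\mathcal L(\Omega)}\le CM'$, so again the integral is bounded uniformly in $n$. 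For \eqref{eq:fun_to_0}, Hölder's inequality yields $\bigl|\int_\Omega f(x,u_n)(u_n-u)\,dx\bigr|\le\|f(\cdot,u_n)\|_{L^q(\Omega)}\|u_n-u\|_{L^{q'}(\Omega)}$; if $\Omega$ is bounded take $q=q'=2$ and use $u_n\to u$ in $L^2(\Omega)$, while if $\Omega$ is unbounded take $q\in(\max\{1,2/\mu\},2)$, so that $q'>2$ and $\|u_n-u\|_{L^{q'}(\Omega)}\to0$ by the compact embedding in (i), with $\|f(\cdot,u_n)\|_{L^q(\Omega)}$ bounded by \eqref{eq:boundedness-f}.

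For \eqref{eq:fun-to-fu_Lp}, which is the core of the argument, the idea is to show that $\{|f(\cdot,u_n)-f(\cdot,u)|^2\}_n$ is uniformly integrable on $\Omega$ and, when $\Omega$ is unbounded, tight, and then to conclude by Vitali's convergence theorem using the a.e.\ convergence. Write $g_n:=|f(x,u_n)-f(x,u)|^2$ and apply \eqref{eq:asterisco_mu_nu} with $\tilde\mu=\mu$ and $\nu=1$ (the choice $\nu\ge1$ is essential, see below), together with \eqref{eq:favourite_ineq}, to get
\[
g_n\le\bar C\bigl(|u_n|^{2\mu}+|u_n|^2(e^{2\alpha u_n^2}-1)+|u|^{2\mu}+|u|^2(e^{2\alpha u^2}-1)\bigr)
\]
when $\Omega$ is unbounded, and $g_n\le\bar C\bigl(1+(e^{2\alpha u_n^2}-1)+(e^{2\alpha u^2}-1)\bigr)$ when $\Omega$ is bounded. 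In the bounded case $\{g_n\}$ is bounded in $L^2(\Omega)$ (by the choice of $\alpha$) on a finite-measure set, hence uniformly integrable, and tightness is trivial. In the unbounded case, uniform integrability follows by estimating the integral of each summand over a set $E$ of small measure through Hölder's inequality, applied twice when needed: the powers $|u_n|^{2\mu}$ are controlled by $\|u_n\|_{L^p(\Omega)}\le C\|u_n\|_{\mathcal L(\Omega)}\le CM'$ for some $p>2\mu$, and the mixed terms $|u_n|^2(e^{2\alpha u_n^2}-1)$ by splitting off a large power of $|u_n|$ and using \eqref{eq:favourite_ineq} to keep $\int_\Omega(e^{c\alpha u_n^2}-1)\,dx\le1$. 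For tightness, $\int_{\Omega\setminus B_R}|u_n|^{2\mu}\,dx\to0$ uniformly in $n$ as $R\to\infty$ because $u_n\to u$ in $L^{2\mu}(\Omega)$ (recall $2\mu>2$), while
\[
\int_{\Omega\setminus B_R}|u_n|^2(e^{2\alpha u_n^2}-1)\,dx\le\left(\int_{\Omega\setminus B_R}|u_n|^4\,dx\right)^{1/2}\left(\int_\Omega(e^{4\alpha u_n^2}-1)\,dx\right)^{1/2}\le\left(\int_{\Omega\setminus B_R}|u_n|^4\,dx\right)^{1/2},
\]
which tends to $0$ uniformly in $n$ since $u_n\to u$ in $L^4(\Omega)$. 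This is precisely where $\nu\ge1$ is used: the factor $|u_n|^\nu$ multiplying the exponential part of $f$ allows us to dominate that part by \emph{compact} $L^q$-norms with $q>2$, circumventing the absence of a compact embedding of $K$ into $L^2(\Omega)$ and the fact that $\int_\Omega(e^{\alpha u_n^2}-1)\,dx$ need not converge (we have no convergence of $u_n$ in the $\mathcal L(\Omega)$-norm, so Lemma~\ref{lem:auxiliary} is not available). Vitali's theorem then gives $\int_\Omega g_n\,dx\to0$ along the a.e.\ convergent subsequence, and a standard subsequence argument, using that every subsequence of $(u_n)$ has a further subsequence converging to $u$ a.e.\ by the compact embedding, promotes this to the full sequence.

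Finally, \eqref{eq:convergence_fun-un} follows by writing
\[
\int_\Omega f(x,u_n)u_n\,dx-\int_\Omega f(x,u)u\,dx=\int_\Omega f(x,u_n)(u_n-u)\,dx+\int_\Omega(f(x,u_n)-f(x,u))u\,dx,
\]
where the first summand vanishes by \eqref{eq:fun_to_0} and the second is bounded by $\|f(\cdot,u_n)-f(\cdot,u)\|_{L^2(\Omega)}\|u\|_{L^2(\Omega)}$, which vanishes by \eqref{eq:fun-to-fu_Lp} since $u\in H^1_0(\Omega)\subset L^2(\Omega)$. The main obstacle is the unbounded-domain part of Step~3: because $K$ is not compactly embedded in $L^2(\Omega)$ and $u_n$ need not converge in the Orlicz norm, the exponential part of $f(x,u_n)$ cannot be dominated by a fixed $L^1$ function with convergent integrals, so one must instead exploit the extra factor $|s|^\nu$ in \eqref{eq:asterisco_mu_nu}, the compact embeddings $K\hookrightarrow\hookrightarrow L^q(\Omega)$ for $q>2$, and Vitali's convergence theorem.
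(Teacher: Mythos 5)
Your proof is correct. Steps \eqref{eq:boundedness-f}, \eqref{eq:fun_to_0} and \eqref{eq:convergence_fun-un} follow the paper essentially verbatim (same use of \eqref{eq:asterisco_mu_nu} with $\nu=0$, the Luxemburg-norm bound on the exponential moments, H\"older with $q'>2$ and the compact embedding, and the final splitting of $\int f(x,u_n)u_n$). The genuine difference is in the core step \eqref{eq:fun-to-fu_Lp}. The paper reduces the problem via the Brezis--Lieb lemma to the convergence of the norms $\|f(\cdot,u_n)\|_{L^q(\Omega)}$ and then truncates at levels $M_k$ chosen so that $|\{u=M_k\}|=0$ (Propositions \ref{prop:zero-measure} and \ref{prop:chi}), controlling the high-level pieces by a Chebyshev-type estimate $C/M_k$ and the truncated piece by the General Dominated Convergence Theorem, with dominating functions $\kappa_1|u_n|^{q\mu}+\kappa_2^{(k)}|u_n|^{q+2}$ obtained exactly as in your bound (the factor $e^{q\alpha M_k^2}$ replacing your Orlicz-moment bound). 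You instead apply Vitali's convergence theorem directly to $|f(\cdot,u_n)-f(\cdot,u)|^2$, verifying uniform integrability and (in the unbounded case) tightness by exploiting the extra factor $|s|^{\nu}$, $\nu\ge1$, in \eqref{eq:asterisco_mu_nu} together with the compact embeddings $K\hookrightarrow\hookrightarrow L^q(\Omega)$, $q>2$, and the uniform bound $\sup_n\int_\Omega(e^{t\alpha u_n^2}-1)\,dx\le1$ for $\alpha$ small. Both routes are designed around the same two obstructions --- no $\mathcal L(\Omega)$-norm convergence of $u_n$ and no compact embedding into $L^2(\Omega)$ --- and are of comparable length; yours buys independence from the two appendix propositions and from Brezis--Lieb, at the price of the uniform-integrability/tightness bookkeeping (including the correct observation that Vitali on an infinite-measure domain requires tightness, and the final subsequence argument to pass from a.e.-convergent subsequences to the full sequence). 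One cosmetic caveat: the smallness condition on $\alpha$ depends on the exponent $q$ (and on the Hölder exponents you introduce), so it should be chosen after $q$ is fixed, exactly as in the paper's "taking $\alpha$ such that $q\alpha\le1/\tilde c^2$"; your phrase "for all the finitely many values of $t$ used below" covers this, but only because each application fixes $q$ first.
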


\begin{proof}
We first show the boundedness of $(f(x,u_n))$ in $L^q(\Omega)$ in the case $\Omega$ unbounded. 
Let $q\geq \max\{1,2/\mu\}$. 
Using \eqref{eq:asterisco_mu_nu} with $\tilde{\mu}=\mu$ and $\nu=0$, we get, for every $\alpha>0$,
\begin{equation}
\label{eq:inequalities-for-f-bdd}
\begin{aligned}
\int_\Omega |f(x,u_n)|^q \, dx &\le 2^{q-1} \int_\Omega \left[C^q|u_n|^{q\mu} + (e^{\alpha u_n^2}-1)^q\right] \, dx\\
&\le 2^{q-1} \left[C^q\|u_n\|_{L^{q\mu}(\Omega)}^{q\mu} + \int_\Omega	(e^{q\alpha u_n^2}-1)\, dx\right],
\end{aligned}
\end{equation}
where we have used the convexity of the function $|\cdot|^q$ and the inequality \eqref{eq:favourite_ineq}.
Moreover, since $(u_n)$ is bounded in $V$ in view of the equivalence of the norms in $K$, see Remark \ref{rem:equivalent-norms}, $\|u_n\|_{\mathcal L(\Omega)}\le \tilde c$ for some $\tilde c>0$. Thus, by the definition of the Luxemburg norm $\|\cdot\|_{\mathcal L(\Omega)}$,
\[
\int_\Omega \left(e^{(u_n/\tilde c)^2}-1\right)\, dx\le 1. 
\]
Therefore, taking $\alpha$ such that $q\alpha	\le 1/\tilde c^2$, we obtain
\begin{equation*}
\sup_{n\in\mathbb N}\int_\Omega (e^{q\alpha u_n^2}-1)\, dx<+\infty.
\end{equation*}
Then, taking also into account the embedding of $\mathcal M(\Omega)$ in $L^{q\mu}(\Omega)$ given by Lemma \ref{lem:L-embedded}, by \eqref{eq:inequalities-for-f-bdd} we can conclude the proof in this case.

The proof for the bounded case is analogous, the only difference being that here one uses the corresponding version of \eqref{eq:asterisco_mu_nu}, that constants are integrable, and that $\mathcal M(\Omega)\hookrightarrow L^q(\Omega)$ for all $q\ge 1$.

Next we prove \eqref{eq:fun_to_0}--\eqref{eq:convergence_fun-un}.

{\it Proof of \eqref{eq:fun_to_0}}: Let $\Omega$ be unbounded and $q\in (\max\{1,2/\mu\},2)$. Then the cone $K$ is compactly embedded in $L^{q'}(\Omega)$ by assumption (i) of Theorem \ref{thm:abstract}, with $q'>2$ being the conjugate exponent of $q$. 
By H\"older's inequality and the boundedness of $(f(x,u_n))$ in $L^q(\Omega)$ proved in \eqref{eq:boundedness-f},
\[
\begin{aligned}
\left|\int_\Omega f(x,u_n)(u_n-u)\, dx\right| &\le \left(\int_\Omega |f(x,u_n)|^q \, dx\right)^{1/q}\left(\int_\Omega|u_n-u|^{q'}\, dx\right)^{1/q'}\\
 &\le C\|u_n-u\|_{L^{q'}(\Omega)} \to 0\quad\mbox{as }n\to \infty.
 \end{aligned}
\]
For $\Omega$ bounded, one can argue similarly taking $q=q'=2$ and using the compact embedding of $H^1_0(\Omega)$ in $L^2(\Omega)$.

{\it Proof of \eqref{eq:fun-to-fu_Lp}}: We will prove the stronger thesis
\begin{equation}\label{eq:strongerth2}
\lim_{n\to\infty}\|f(\cdot,u_n)-f(\cdot,u)\|_{L^q(\Omega)}= 0\, \mbox{ if }
q 
\begin{cases} 
\ge 1&\mbox{if $\Omega$ is bounded}\\
>\max\left\{ 1,\frac{2}{\mu} \right\} &\mbox{ if $\Omega$ is unbounded.}
\end{cases}
\end{equation}

Since $u_n\to u$ a.e. for a renamed subsequence and $f$ is continuous by \ref{f1}, $f(x,u_n)\to f(x,u)$ a.e. in $\Omega$. By a consequence of Brezis-Lieb's Lemma \cite{BL} it is enough to prove that $\|f(\cdot,u_n)\|_{L^q(\Omega)}\to \|f(\cdot,u)\|_{L^q(\Omega)}$.
By Proposition \ref{prop:zero-measure}, there exists a strictly increasing sequence $(M_k)\subset\mathbb R$ such that $M_k\to \infty$ and $|\{u=M_k\}|=0$ for every $k\in \mathbb N$. Now, let $k\in\mathbb N$ be fixed. We have for $q$ as in \eqref{eq:strongerth2} and for every $n\in\mathbb N$
\begin{equation}\label{eq:f-splitting}
\begin{aligned}
&\left|\int_\Omega\left(|f(x,u_n)|^q-|f(x,u)|^q\right)\, dx\right|\\
 &\qquad\quad\le \int_{\{|u_n|\ge M_k\}}|f(x,u_n)|^q \, dx\\
&\qquad\qquad + \left|\int_\Omega\left(|f(x,u_n)|^q \chi_{\{|u_n|< M_k\}}-|f(x,u)|^q \chi_{\{|u|< M_k\}}\right)\, dx\right|\\
&\qquad\qquad + \int_{\{|u|\ge M_k\}}|f(x,u)|^q \, dx =: I_{n,M_k}+I'_{n,M_k}+I''_{M_k},
\end{aligned}
\end{equation}
where, for a set $D\subseteq \R^N$, $\chi_D$ denotes the characteristic function of $D$, i.e. 
\begin{equation}
\label{eq:chi-def}
\chi_D(x)=\begin{cases}
1 \quad & \text{if }x\in D \\
0 & \text{otherwise.}
\end{cases}
\end{equation}
As for $I_{n,M_k}$,  by \eqref{eq:boundedness-f} we can write 
\[
\begin{aligned}I_{n,M_k}&\le \frac{1}{M_k}\int_{\{|u_n|\ge M_k\}}|f(x,u_n)|^q |u_n|\, dx\\&
\le \frac{1}{M_k}\|u_n\|_{L^2(\Omega)}\left(\int_\Omega|f(x,u_n)|^{2q}\, dx\right)^{\frac{1}{2}}\le \frac{C}{M_k},
\end{aligned}
\]
for some $C>0$ independent of $n$ and $M_k$.
Similarly, $I''_{M_k}\le C/M_k$. In order to estimate $I'_{n,M_k}$, we first observe that 
\[|f(x,u_n)|^q\chi_{\{|u_n|<M_k\}}\to |f(x,u)|^q\chi_{\{|u| < M_k\}}\quad\mbox{a.e. in $\Omega$, as $n\to\infty$,}
\]
by Proposition \ref{prop:chi}.

Next, we distinguish two cases, depending on whether $\Omega$ is bounded or unbounded. Let $q$ be as in \eqref{eq:strongerth2}.
If $\Omega$ is unbounded, using \eqref{eq:asterisco_mu_nu} with $\alpha>0$, $\tilde\mu=\mu$ and $\nu=1$, the inequality \eqref{eq:favourite_ineq}, and 
\begin{equation}
\label{eq:second-favourite-ineq}
e^s-1\le s e^s\quad\mbox{for all }s\ge 0,
\end{equation}
we have 
\[
\begin{aligned}
|f(x,u_n)|^q\chi_{\{|u_n|<M_k\}}&\le 2^{q-1} \left[C^q|u_n|^{q\mu} + |u_n|^q(e^{q\alpha u_n^2}-1)\right]\cdot\chi_{\{|u_n|<M_k\}}\\ 
&\le 2^{q-1}C^q |u_n|^{q\mu}+ 2^{q-1}q\alpha e^{q\alpha M_k^2}|u_n|^{q+2}\\
&= \kappa_1 |u_n|^{q\mu}+ \kappa^{(k)}_2|u_n|^{q+2}=:g^{(k)}_n,
\end{aligned}
\] 
where $\kappa_1,\,\kappa^{(k)}_2>0$ are independent of $n$.
Moreover, setting $g^{(k)}:=\kappa_1 |u|^{q\mu}+ \kappa^{(k)}_2|u|^{q+2}$, we have that $g^{(k)}_n,\,g^{(k)}\in L^1(\Omega)$ and that $g^{(k)}_n\to g^{(k)}$ a.e. in $\Omega$, as $n\to\infty$. By condition (i) of Theorem \ref{thm:abstract}, $K$ is compactly embedded in $L^{q\mu}(\Omega)$ and in $L^{q+2}(\Omega)$, then $\int_\Omega g^{(k)}_n\to \int_\Omega g^{(k)}$ as $n\to\infty$. 
Similarly, if $\Omega$ is bounded, using \eqref{eq:asterisco_mu_nu} we obtain
\[
\begin{aligned}
|f(x,u_n)|^q\chi_{\{|u_n|<M_k\}}&\le 2^{q-1} \left[C^q + |u_n|^q(e^{q\alpha u_n^2}-1)\right]\cdot\chi_{\{|u_n|<M_k\}}\\ 
&\le 2^{q-1}C^q + 2^{q-1}q\alpha e^{q\alpha M_k^2}|u_n|^{q+2}= \kappa_3+ \kappa^{(k)}_4|u_n|^{q+2}=:h^{(k)}_n,
\end{aligned}
\] 
where $\kappa_3,\,\kappa^{(k)}_4>0$ are independent of $n$. Moreover, setting $h^{(k)}:=\kappa_3 + \kappa^{(k)}_4|u|^{q+2}$, we have that $h^{(k)}_n,\,h^{(k)}\in L^1(\Omega)$ and that $h^{(k)}_n\to h^{(k)}$ a.e. in $\Omega$ and $\int_\Omega h^{(k)}_n\to\int_\Omega h^{(k)}$ as $n\to\infty$. 
In both cases, by the General Dominated Convergence Theorem, we have that 
$I'_{n,M_k}\to 0$ as $n\to \infty$, for every $k\in \mathbb N$. Altogether, for every $k\in\mathbb N$
\[
\limsup_{n\to\infty}(I_{n,M_k}+I'_{n,M_k}+I''_{M_k})\le \frac{C}{M_k}.
\]
Therefore, by \eqref{eq:f-splitting},
\[
\begin{aligned}
0\le \liminf_{n\to\infty}&\left|\int_\Omega\left(|f(x,u_n)|^q-|f(x,u)|^q\right)\, dx\right|\\
&\le \limsup_{n\to\infty}\left|\int_\Omega\left(|f(x,u_n)|^q-|f(x,u)|^q\right)\, dx\right| \le \frac{C}{M_k}\to 0 \quad\mbox{as }k\to\infty,
\end{aligned}
\]
which concludes the proof of \eqref{eq:strongerth2}.

{\it Proof of \eqref{eq:convergence_fun-un}}: By \eqref{eq:fun_to_0}, the Cauchy-Schwarz inequality, and \eqref{eq:fun-to-fu_Lp}
\[
\begin{aligned}
&\left|\int_\Omega f(x,u_n)u_n \, dx - \int_\Omega f(x,u)u\, \, dx\right|\\
&\qquad\le \left|\int_\Omega f(x,u_n)(u_n-u) \, dx\right| + \left|\int_\Omega (f(x,u)-f(x,u_n))u \,dx\right|\\
&\qquad \le  o(1) + \|f(\cdot,u)-f(\cdot,u_n)\|_{L^2(\Omega)}\|u\|_{L^2(\Omega)}=o(1).
\end{aligned}
\]
\end{proof}

\begin{remark}
If $\Omega$ has finite measure, the proof of \eqref{eq:fun-to-fu_Lp}, in its stronger form \eqref{eq:strongerth2}, can be simplified as follows. 
Let $q\in[1,\infty)$. Since $u_n\to u$ a.e. in $\Omega$ for a renamed subsequence, by the continuity of $f$ in \ref{f1} we have $f(x,u_n)\to f(x,u)$ for a.e. $x\in\Omega$. Moreover, by the first part of the proof, we know that $(f(x,u_n))$ is bounded in $L^{q+1}(\Omega)$. 
Thus, by \cite[Ex. 4.16]{Brezis} which holds on domains with finite measures, $f(x,u_n)\to f(x,u)$ in $L^q(\Omega)$.
\end{remark}

\subsection{Critical point theory for nonsmooth functionals}
We recall now the definitions of critical point and Palais-Smale sequence for nonsmooth functionals in Szulkin's critical point theory \cite{S}. 

Let $(E,\|\cdot\|)$ be a real Banach space, $\Psi:E\to(-\infty,+\infty]$ a proper (i.e., $\mathrm{Dom}(\Psi):=\{v\in E\,:\,\Psi(v)<+\infty\}\not=\emptyset$), convex, lower semincontinuous function. 
Let $\mathcal G\in C^1(E;\mathbb R)$ and $I:E\to \mathbb R$ be defined by $I:=\Psi-\mathcal G$.

\begin{definition}\label{def:critical_point}
A point $u\in E$ is said to be a \it{critical point of $I$} if $u\in\mathrm{Dom}(\Psi)$ and
\[
\Psi(u)-\Psi(v)-\mathcal G'(u)[u-v]\le 0\quad\mbox{for all }v\in E.
\]
\end{definition}

\begin{definition}\label{def:PS_sulkin}
A sequence $(u_n)\subset E$ is said to be a {\it Palais-Smale sequence at level $c$} ((PS)-sequence at level $c$ or (PS)$_c$-sequence for short) if 
\begin{equation}
I(u_n)\to c\in\R \mbox{ and }
\Psi(u_n)-\Psi(v)- \mathcal G'(u_n)[u_n-v]\leq \varepsilon_n \|u_n-v\| \ \ \forall v\in E,
\end{equation}
with $\varepsilon_n\to0$.

The functional $I$ {\it satisfies the Palais-Smale condition at level $c$} if any (PS)$_c$-sequence admits a convergent subsequence.
\end{definition}

Our main existence theorem relies on the following nonsmooth version of the mountain pass theorem without compactness which is obtained as an application of the general minimax principle \cite[Theorem 3.1]{LM} by Livrea and Marano, see also \cite{ABT, AM} for similar results.
\begin{theorem}
\label{thm:ArcoyaBereanuTorres}
Let $(E,\|\cdot\|)$ be a real Banach space, $I:E\to \mathbb R$ a functional defined by $I:=\Psi-\mathcal G$, where $\Psi:E\to(-\infty,+\infty]$ is a proper, convex function and $\mathcal G\in C^1(E;\mathbb R)$. If the following assumptions hold: 
\begin{itemize}
\item[(i)] $\mathrm{Dom}(\Psi)$ is closed and $\Psi\big|_{\mathrm{Dom}(\Psi)}$ is continuous;
\item[(ii)] $I(0)=0$ and there exists $\rho>0$ such that $I(u)>0$ for every $0< \|u\| \le \rho$;
\item[(iii)] there exists $e\in E$ with $\|e\|>\rho$ such that $I(e)\le 0$,
\end{itemize}
then there exists a (PS)-sequence $(u_n)\subset E$ at the mountain pass level 
\[ 
c_{mp}:=\inf_{\gamma\in \Gamma}\sup_{t\in[0,1]}I(\gamma(t)),
\]
where $\Gamma:=\{\gamma\in C([0,1];E)\,:\; \gamma(0)=0\neq \gamma(1), I(\gamma(1))\le 0\}$.
\end{theorem}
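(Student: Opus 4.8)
The plan is to obtain the (PS)-sequence directly from the general minimax principle \cite[Theorem 3.1]{LM} for Szulkin functionals (see also \cite{ABT,AM}), by checking that hypotheses (i)--(iii) encode precisely the geometric requirements of that principle for the mountain-pass family $\Gamma$. No compactness of $I$ enters, which is exactly the feature we want to keep.

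First I would record the elementary consequences of (i)--(iii). Since $I(0)=0$ and $I(e)\le 0$ are finite, both $0$ and $e$ lie in $\mathrm{Dom}(\Psi)$, a set which is convex ($\Psi$ being convex) and closed by (i); moreover (i) forces $\Psi$, and hence $I$, to be lower semicontinuous on all of $E$ (outside the closed set $\mathrm{Dom}(\Psi)$ one has $\Psi\equiv+\infty$, while $\Psi|_{\mathrm{Dom}(\Psi)}$ is continuous). The segment $\sigma(t):=te$ then stays in $\mathrm{Dom}(\Psi)$, and $I\circ\sigma$ is continuous on $[0,1]$ because $\Psi|_{\mathrm{Dom}(\Psi)}$ is continuous and $\mathcal G\in C^1(E;\mathbb R)$; thus $\sigma\in\Gamma$ and $\sup_{[0,1]}I\circ\sigma<+\infty$, so $c_{mp}<+\infty$. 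Note that any $\gamma\in\Gamma$ whose image leaves $\mathrm{Dom}(\Psi)$ has $\sup_t I(\gamma(t))=+\infty$, so the infimum defining $c_{mp}$ is really taken over paths in the complete metric space $C([0,1];\mathrm{Dom}(\Psi))$ --- this is where the closedness of $\mathrm{Dom}(\Psi)$ in (i) is used.

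Next I would verify the linking inequality. Set $B:=\{0\}\cup\{u\in E:\,u\ne 0,\ I(u)\le 0\}$; by (ii) no $u$ with $0<\|u\|\le\rho$ lies in $B$, so $B$ is closed (using also the lsc of $I$) and $\sup_B I=\max\{I(0),0\}=0$. For any $\gamma\in\Gamma$, since $\gamma(1)\ne 0$ and $I(\gamma(1))\le 0$, (ii) forces $\|\gamma(1)\|>\rho$; as $\|\gamma(0)\|=0$, the continuous map $t\mapsto\|\gamma(t)\|$ attains the value $\rho$ at some $t_\gamma$, whence $\sup_{[0,1]}I\circ\gamma\ge I(\gamma(t_\gamma))>0$ by (ii). Taking the infimum over $\gamma$ gives $c_{mp}\ge 0=\sup_B I$ (in fact $c_{mp}\ge\inf_{\|u\|=\rho}I(u)$). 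Finally, the collection $\{\gamma([0,1]):\gamma\in\Gamma\}$ is stable under the deformations employed in \cite{LM}: any continuous deformation that fixes $0$ and every point of $\{u\ne 0:\,I(u)\le 0\}$ sends a path of $\Gamma$ to a path with the same endpoints, hence again in $\Gamma$. With $c_{mp}<+\infty$ and $\sup_B I\le c_{mp}$ in force, \cite[Theorem 3.1]{LM} then provides $(u_n)\subset E$ with $I(u_n)\to c_{mp}$ and
\[
\Psi(u_n)-\Psi(v)-\mathcal G'(u_n)[u_n-v]\le\varepsilon_n\|u_n-v\|\qquad\text{for all }v\in E,
\]
with $\varepsilon_n\to 0^{+}$, i.e.\ a (PS)-sequence at level $c_{mp}$ in the sense of Definition~\ref{def:PS_sulkin}.

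The main obstacle, and the point I would treat carefully, is the interface with the nonsmooth framework rather than the geometry: the deformation argument behind \cite[Theorem 3.1]{LM} must be run for the Szulkin pair $(\Psi,\mathcal G)$, so that the deforming flow keeps paths inside $\mathrm{Dom}(\Psi)$ (whence the essential role of the closedness in (i)) and so that the quantity driven to zero along $(u_n)$ coincides exactly with the Szulkin slope appearing in Definition~\ref{def:PS_sulkin}. In particular that principle does not require the linking inequality to be strict, so the degenerate possibility $c_{mp}=0=\sup_B I$ causes no difficulty; everything else is a routine translation of (i)--(iii) into the hypotheses of \cite[Theorem 3.1]{LM}.
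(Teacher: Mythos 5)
Your proposal is correct and follows essentially the same route as the paper: both reduce the statement to the Livrea--Marano general minimax principle \cite[Theorem 3.1]{LM}, taking the homotopy-stable family $\mathcal F=\{\gamma([0,1]):\gamma\in\Gamma\}$ with extended boundary $B=\{I\le 0\}$ and dual set $F=\{\|u\|=\rho\}$, checking the intersection property via the crossing of the sphere of radius $\rho$, the inequality $\sup_B I\le\inf_F I$ from (ii), and the finiteness of $c_{mp}$ from the segment $t\mapsto te$. The only cosmetic difference is that you make explicit the role of the closedness of $\mathrm{Dom}(\Psi)$ in keeping the relevant paths inside a complete metric space, which the paper leaves implicit.
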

\begin{remark}
In the standard critical point theory for nonsmooth functionals, the function $\Psi$ is required to be lower semicontinuous. In the preceding theorem, this assumption is guaranteed by condition (i). 
\end{remark}
\begin{proof}[Proof of Theorem \ref{thm:ArcoyaBereanuTorres}]
We apply \cite[Theorem 3.1]{LM} with $\mathcal F=\{\gamma([0,1])\,:\, \gamma\in\Gamma\}$, $B=\{u\in E\,:\, I(u)\le 0\}$, and $F=\{u\in E\,:\, \|u\|=\rho\}$. Note that  $B$ is closed, as $I$ is lower semicontinuous by the above remark. Moreover, $\mathcal F$ is a homotopy-stable family of compact sets of $E$ with extended boundary $B$, namely for every $A\in\mathcal F$ and every $\eta\in C([0,1]\times E;E)$ such that $\eta(t,u)=u$ in $(\{0\}\times E)\cup ([0,1]\times B)$, one has $\eta(\{1\}\times A)\in \mathcal F$. To this aim it is enough to show that $\eta(1,\gamma(\cdot))\in\Gamma$, which is an easy consequence of the definitions of $\Gamma$ and $B$. The minimax level $c_{mp}$ introduced in the statement is finite because $\mathcal F\neq \emptyset$ by (iii) and $I(te)<+\infty$ for every $t\in [0,1]$, since $te\in \mathrm{Dom}(\Psi)$ by the convexity of $\Psi$. 
Furthermore, for every $\gamma\in\Gamma$, $(\gamma([0,1])\cap F )\setminus B \neq\emptyset$. Indeed, by (ii), $F\cap B=\emptyset$ and $\|\gamma(1)\|>\rho$; since $\gamma(0)=0$, by the continuity of $\gamma$, $(\gamma([0,1])\cap F) \setminus B =(\gamma([0,1])\cap F) \neq\emptyset$. Again by (ii), 
\[\sup_{u\in B}I(u)\le \inf_{u\in F}I(u).\]
Altogether, all the assumptions in \cite[Theorem 3.1]{LM} are satisfied and so, to every sequence $(\gamma_n)\subset\Gamma$ such that $\lim_{n\to\infty}\sup_{t\in[0,1]}I(\gamma_n(t))=c_{mp}$, corresponds a (PS)$_{c_{mp}}$-sequence. This concludes the proof.
\end{proof}

\subsection{Definition and regularity of the energy functional}
We will apply Theorem \ref{thm:ArcoyaBereanuTorres} to the real Banach space $(V;\|\cdot\|_V)$, see \eqref{eq:normV}, and the following functional $J_K:V\to (-\infty,+\infty]$,
\begin{equation}\label{eq:J_def2}
J_K(u):=\Psi_K(u)-\Phi(u)\quad \mbox{for every }u\in V,
\end{equation}
where $K$ is defined in the statement of Theorem \ref{thm:abstract} and $\Phi,\,\Psi:V\to \mathbb R$ are given respectively by 
\begin{equation}\label{eq:Phi-def}
\Phi(u):=\int_\Omega F(x,u) \,dx\quad\mbox{for all }u\in V,
\end{equation} 
and
\[
\Psi_K(u):=\begin{cases}
\frac12\|u\|_{H^1_\lambda(\Omega)}^2\ &\text{if }u\in K, \\
+\infty & \text{otherwise}.
\end{cases}
\]
We note that, within the cone $K$, the functional $J_K$ coincides with $J$, as defined in \eqref{eq:J-def}.

\begin{proposition}\label{prop:J-C1}
Under the assumptions of Theorem \ref{thm:abstract} the functional $\Phi$ in \eqref{eq:Phi-def} is well-defined and of class $C^1$.
\end{proposition}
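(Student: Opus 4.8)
The plan is to show that $\Phi(u)=\int_\Omega F(x,u)\,dx$ is finite for every $u\in V$ and then that $u\mapsto\Phi(u)$ is Gateaux differentiable with a derivative that is continuous in the $\|\cdot\|_V$-norm, which together with the mean value theorem yields $\Phi\in C^1(V;\mathbb R)$. First I would establish well-posedness: using \eqref{eq:conseq-hyp-on-f}, for fixed $u\in V$ we bound $F(x,u)$ pointwise. In the bounded case, $F(x,u)\le \tfrac{1-\varepsilon}{2}(\lambda_1+\lambda)u^2+C|u|^{\nu+1}(e^{\alpha u^2}-1)$; choosing $\nu=0$ and estimating $|u|(e^{\alpha u^2}-1)\le \tfrac12(e^{2\alpha u^2}-1)$ (via $s(e^{s^2}-1)\le\tfrac12(e^{2s^2}-1)$, or more simply $ab\le\tfrac12(a^2+b^2)$ together with \eqref{eq:favourite_ineq}), the first term is in $L^1$ because $u\in H^1_0(\Omega)\hookrightarrow L^2(\Omega)$ and the second because $u\in\mathcal M(\Omega)$, so $e^{2\alpha u^2}-1\in L^1(\Omega)$ for every $\alpha>0$. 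In the unbounded case, $F(x,u)\le C|u|^{\tilde\mu+1}+|u|^{\nu+1}(e^{\alpha u^2}-1)$; choosing $\tilde\mu=\mu$, $\nu=0$, the first term lies in $L^1(\Omega)$ by Lemma \ref{lem:L-embedded} (which gives $\mathcal M(\Omega)\hookrightarrow L^{\mu+1}(\Omega)$ since $\mu+1>2$, and $u\in\mathcal M(\Omega)\subset\mathcal L(\Omega)$), while the second term is handled as above using $u\in\mathcal M(\Omega)$. Hence $\Phi(u)\in\mathbb R$ for all $u\in V$.

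Next I would compute the Gateaux derivative. For $u,\varphi\in V$ and $t\in(0,1)$, write $\frac{F(x,u+t\varphi)-F(x,u)}{t}=f(x,u+\xi_{x,t}\varphi)\varphi$ with $|\xi_{x,t}|\le t$ by the mean value theorem, and the pointwise limit as $t\to0$ is $f(x,u)\varphi$. To pass to the limit under the integral sign I would produce a $t$-uniform $L^1$-dominating function: by \eqref{eq:asterisco_mu_nu} with $\nu=0$ (and $\tilde\mu=\mu$ in the unbounded case), for $|t|\le1$,
\[
|f(x,u+\xi_{x,t}\varphi)\varphi|\le \big(C|u+\xi_{x,t}\varphi|^{\tilde\mu}+e^{\alpha(u+\xi_{x,t}\varphi)^2}-1\big)|\varphi|
\le C'\big(|u|^{\tilde\mu}+|\varphi|^{\tilde\mu}\big)|\varphi|+\big(e^{2\alpha u^2}-1\big)^{1/2}\big(e^{2\alpha\varphi^2}-1\big)^{1/2}|\varphi|\cdot c,
\]
where in the last step I use $(u+\xi\varphi)^2\le 2u^2+2\varphi^2$ and $e^{a+b}-1\le (e^{2a}-1)^{1/2}(e^{2b}-1)^{1/2}\cdot$const together with convexity; the power terms are in $L^1$ by Hölder (using $\mathcal M(\Omega)\hookrightarrow L^q$ for all relevant $q$, or just $L^2$ in the bounded case), and the exponential term is in $L^1$ by Hölder and the fact that $u,\varphi\in\mathcal M(\Omega)$ give $e^{2\alpha u^2}-1, e^{2\alpha\varphi^2}-1\in L^1(\Omega)$. (In the bounded case $C|u+\xi\varphi|^{\tilde\mu}$ is replaced by a constant, which is integrable.) The dominated convergence theorem then yields $\Phi'(u)[\varphi]=\int_\Omega f(x,u)\varphi\,dx$, and $|\Phi'(u)[\varphi]|\le \|f(\cdot,u)\|_{L^2(\Omega)}\|\varphi\|_{L^2(\Omega)}$ shows $\Phi'(u)\in V^*$, the $L^2$-norm of $f(\cdot,u)$ being finite by the same estimates used for $\Phi(u)$ (now with $q=2$).

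Finally I would prove continuity of $u\mapsto\Phi'(u)$ from $(V,\|\cdot\|_V)$ to $V^*$. If $u_n\to u$ in $V$, then in particular $u_n\to u$ in $\mathcal L(\Omega)$, so Lemma \ref{lem:convergenceOrlicz} gives $\|f(\cdot,u_n)-f(\cdot,u)\|_{L^2(\Omega)}\to0$; hence $\sup_{\|\varphi\|_V\le1}|\Phi'(u_n)[\varphi]-\Phi'(u)[\varphi]|\le \|f(\cdot,u_n)-f(\cdot,u)\|_{L^2(\Omega)}\sup_{\|\varphi\|_V\le1}\|\varphi\|_{L^2(\Omega)}\to0$, where $\sup_{\|\varphi\|_V\le1}\|\varphi\|_{L^2(\Omega)}<\infty$ by the continuous embeddings $V\hookrightarrow H^1_0(\Omega)\hookrightarrow L^2(\Omega)$ (bounded case) or $V\hookrightarrow\mathcal L(\Omega)\hookrightarrow L^2(\Omega)$ (unbounded case). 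This shows $\Phi'$ is continuous, so $\Phi\in C^1(V;\mathbb R)$. The main obstacle is the control of the exponential term in the dominating function: one must exploit that $u,\varphi$ belong to the \emph{small} Orlicz space $\mathcal M(\Omega)$ — so that $e^{\alpha u^2}-1\in L^1$ for \emph{every} $\alpha$, not just some $\alpha$ — which is exactly why the functional setting $V=H^1_0(\Omega)\cap\mathcal M(\Omega)$, rather than $H^1_0(\Omega)\cap\mathcal L(\Omega)$, is needed; the rest is a fairly standard differentiation-under-the-integral argument, with Lemma \ref{lem:convergenceOrlicz} doing the work for the continuity step.
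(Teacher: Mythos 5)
Your proof follows the same route as the paper's: well-posedness via \eqref{eq:conseq-hyp-on-f} together with Cauchy--Schwarz/Young, \eqref{eq:favourite_ineq} and the fact that $u\in\mathcal M(\Omega)$ makes $e^{2\alpha u^2}-1$ integrable for \emph{every} $\alpha$; and continuity of $\Phi'$ via Lemma \ref{lem:convergenceOrlicz} plus the embedding $\mathcal L(\Omega)\hookrightarrow L^2(\Omega)$, exactly as in the paper. The paper dismisses the Gateaux differentiability as ``standard calculations using \eqref{eq:asterisco_mu_nu}'', so your detailed dominated-convergence argument there is filling in a step the authors omit, and your closing remark on why $\mathcal M(\Omega)$ rather than $\mathcal L(\Omega)$ is needed is exactly the right diagnosis.

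One step, as literally written, is false: the inequality $e^{a+b}-1\le \mathrm{const}\cdot(e^{2a}-1)^{1/2}(e^{2b}-1)^{1/2}$ fails (take $b=0$ and $a>0$: the left-hand side is positive while the right-hand side vanishes; more relevantly, for $u$ large and $\varphi$ small but nonzero the proposed majorant is strictly smaller than the quantity it is supposed to dominate, so the domination is not pointwise valid). The standard fix, consistent with what the paper does in Lemma \ref{lem:auxiliary}, is
\[
e^{2\alpha u^2+2\alpha\varphi^2}-1\;\le\;\tfrac12\bigl(e^{4\alpha u^2}-1\bigr)+\tfrac12\bigl(e^{4\alpha \varphi^2}-1\bigr),
\]
which follows from $e^{a}e^{b}\le\tfrac12 e^{2a}+\tfrac12 e^{2b}$; after multiplying by $|\varphi|$, Cauchy--Schwarz and \eqref{eq:favourite_ineq} give an $L^1$ dominating function independent of $t$, since $u,\varphi\in\mathcal M(\Omega)\subset L^2(\Omega)$. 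With this one-line repair the argument is complete and correct.
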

\begin{proof} We first observe that $\Phi$ is well defined. Indeed, let $\alpha>0$ and $u\in V$. If $\Omega$ is unbounded,	by \eqref{eq:conseq-hyp-on-f} with $\alpha>0$, $\tilde\mu=1$ and $\nu=0$, the Cauchy-Schwarz inequality, and \eqref{eq:favourite_ineq},
\[
\begin{aligned}
\int_\Omega F(x,u)\, dx &\le \int_\Omega\left[Cu^2+|u|(e^{\alpha u^2}-1)\right]\, dx\\
&\le C\|u\|_{L^2(\Omega)}^2+\|u\|_{L^2(\Omega)}\left(\int_\Omega(e^{\alpha u^2}-1)^2\, dx\right)^{\frac{1}{2}} \\
&\leq C\|u\|_{L^2(\Omega)}^2+\|u\|_{L^2(\Omega)}\left(\int_\Omega(e^{2\alpha u^2}-1)\, dx\right)^{\frac{1}{2}}<\infty.
\end{aligned}
\]
Similarly, if $\Omega$ is bounded, one gets
\[
\int_\Omega F(x,u)\, dx 
\le C\left[\|u\|^2_{L^2(\Omega)}+\|u\|_{L^2(\Omega)}\left(\int_\Omega(e^{2\alpha u^2}-1)\, dx\right)^{\frac{1}{2}}\right]<\infty.
\]
Moreover, by standard calculations and using \eqref{eq:asterisco_mu_nu}, it is easy to prove that for every $u\in V$ the functional $\Phi$ admits the G\^ateaux derivative at $u$ defined by 
\[
\Phi'(u)[v]=\int_\Omega f(x,u)v \, dx\quad\mbox{for all }v\in V.
\]

It remains to prove the continuity of $\Phi'$. To this aim, let $u\in V$ and $(u_n)\subset V$ be such that $u_n\to u$ in $V$.  In particular, Lemma \ref{lem:convergenceOrlicz} applies to $(u_n)$ and $u$, thus
\begin{equation}
\label{eq:J-C1}
\lim_{n\to\infty} \|f(\cdot,u_n)-f(\cdot, u)\|_{L^2(\Omega)} =0.
\end{equation}
By the Cauchy-Schwarz inequality, \eqref{eq:L-embedded} and \eqref{eq:J-C1}, we have
\[
\begin{aligned}
\sup_{\substack{v\in V \\ \|v\|_{V}\leq1}}
\left| \int_\Omega \left( f(x,u_n)-f(x,u) \right) v \,dx \right|
&\leq 
\|f(\cdot,u_n)-f(\cdot, u)\|_{L^2(\Omega)} \sup_{\substack{v\in V \\ \|v\|_{V}\leq1}} \| v\|_{L^{2}(\Omega)} \\
&\le \|f(\cdot,u_n)-f(\cdot, u)\|_{L^2(\Omega)} \sup_{\substack{v\in V \\ \|v\|_{\mathcal{L}(\Omega)}\leq1}} \| v\|_{L^{2}(\Omega)} 
 \\
&\leq C\|f(\cdot,u_n)-f(\cdot, u)\|_{L^2(\Omega)} \to 0
\end{aligned}
\]
as $n\to\infty$, with $C$ as in Lemma \ref{lem:L-embedded} with $q=2$,  and we conclude that  $\Phi\in C^1(V;\R)$.
\end{proof}

\begin{remark}\label{rmk:JC1} 
As a consequence of Proposition \ref{prop:J-C1}, also the functional $J$ defined in \eqref{eq:J-def} is of class $C^1$. 
\end{remark}

\subsection{Proof of Theorem \ref{thm:abstract}}
For the sake of clarity, we split the proof into several steps.
First, we verify that all assumptions of Theorem \ref{thm:ArcoyaBereanuTorres} hold for the functional $J_K$, ensuring the existence of a (PS)-sequence at the level $c_{mp}$. Note that $c_{mp}$ coincides with the mountain pass level in the statement of the present theorem since $J_K=J$ in $K$ and $J_K=+\infty$ outside $K$, see the  definition of $c$ in \eqref{eq:mp-level} below. 

Next, we show that every (PS)-sequence has a strongly converging subsequence in the $H^1(\Omega)$-norm. We remark that our choice of applying Theorem \ref{thm:ArcoyaBereanuTorres}, instead of the standard Szulkin's nonsmooth mountain pass theorem, is motivated by the fact that Theorem \ref{thm:ArcoyaBereanuTorres} does not require compactness assumptions. 
Indeed, it is unclear whether $J_K$ satisfies the Palais-Smale condition in $V$,  as the main obstruction lies in the fact that $K$ is not a linear space, meaning that, in general, differences of elements in $K$ do not belong to $K$.

Thereafter, we prove that the limit function of the (PS)$_c$-sequence is a nontrivial critical point of $J_K$ in the sense of Szulkin's theory. Here, the convergence results from Lemma \ref{lem:convergences} play a crucial role. 

Finally, we show that the critical point obtained before is a weak solution of problem \eqref{eq:main}. This final step relies on the pointwise invariance property (ii) of the present theorem.

\noindent\textbf{Mountain pass geometry and existence of a (PS)$_c$-sequence.} 
As already mentioned, we will apply Theorem \ref{thm:ArcoyaBereanuTorres} with $E=V$ and $I=J_K=\Psi_K-\Phi$ as in \eqref{eq:J_def2}.
The functional $\Psi_K$ is convex, $\Psi_K\big|_{\mathrm{Dom}(\Psi_K)}$ is continuous, and the set $\mathrm{Dom}(\Psi_K)=K$ is closed. Moreover, by Proposition \ref{prop:J-C1}, $\Phi\in C^1(V;\R)$. Hence, $J_K$ satisfies the structural assumptions of Theorem \ref{thm:ArcoyaBereanuTorres} as well as condition (i) therein. 

We now prove that $J_K$ satisfies the mountain pass geometry assumptions (ii) and (iii) of Theorem \ref{thm:ArcoyaBereanuTorres}. As for (ii), clearly $J_K(0)=0$; for the remaining part we distinguish the proof depending on whether $\Omega$ is bounded or unbounded. 

{\it $\Omega$ bounded}: Since $J_K(u)=+\infty$ for every $u\in V\setminus K$, we can assume $u\in K$. Moreover, 
without loss of generality, we suppose that $\|u\|_{V}\le 1$. Now let $\alpha\in (0,1/2)$, $\nu>1$, and $\varepsilon\in (0,1)$. 
By \eqref{eq:conseq-hyp-on-f}, the variational characterization of $\lambda_1$, and  the Cauchy-Schwarz inequality
we have
\[
\begin{aligned}
J_K&(u) 
\ge  \frac{1}{2}\int_\Omega \left(|\nabla u|^2+\lambda u^2-(\lambda_1+\lambda)(1-\varepsilon)u^2-C|u|^{\nu+1}(e^{\alpha u^2}-1)\right)\, dx\\
&\ge \frac{1}{2}\|u\|^2_{H^1_\lambda(\Omega)}-\frac{1-\varepsilon}{2} \int_\Omega |\nabla u|^2 dx - \lambda \frac{1-\varepsilon}{2} \int_\Omega u^2 dx - C\int_\Omega |u|^{\nu+1}(e^{\alpha u^2}-1)\, dx\\
&\ge \frac{\varepsilon}{2}\|u\|^2_{H^1_\lambda(\Omega)}-C\|u\|_{L^{2(\nu+1)}(\Omega)}^{\nu+1}\left(\int_\Omega (e^{\alpha u^2}-1)^2\, dx\right)^{1/2}.
\end{aligned}
\]
Since, by Lemma \ref{lem:L-embedded}  and the definition of the $V$-norm,  we have $\|u\|_{L^{2(\nu+1)}(\Omega)}\le C \|u\|_{\mathcal L(\Omega)}\le C\|u\|_V$, 
we can refine the estimate for $J_K(u)$ above to obtain
\[
J_K(u) \ge \frac{\varepsilon}{2}\|u\|^2_{H^1_\lambda(\Omega)}-C\|u\|_{V}^{\nu+1}\left(\int_\Omega (e^{2\alpha u^2}-1)\, dx\right)^{1/2},
\]
where we have used inequality \eqref{eq:favourite_ineq} for the last term.
Moreover, since $\alpha<1/2$ and $\|u\|_{\mathcal L(\Omega)}\le \|u\|_V \le 1$,
\begin{equation}\label{eq:sup_exp}
\int_\Omega (e^{2\alpha u^2}-1)\, dx\le \int_\Omega (e^{u^2}-1)\, dx \le \int_\Omega \left[e^{\left(u/\|u\|_{\mathcal L(\Omega)}\right)^2}-1\right]\, dx \le 1,
\end{equation}
where we have used the definition of the Luxemburg norm $\|\cdot\|_{\mathcal L(\Omega)}$.
Hence, by combining the previous inequalities, we have  
\[
J_K(u)\ge \frac{\varepsilon}{2}\|u\|^2_{H^1_\lambda(\Omega)}-C\|u\|_V^{\nu+1}.
\]
Finally, using the equivalence of the norms $\|\cdot\|_V$ and $\|\cdot\|_{H^1_\lambda(\Omega)}$ in $K$ implied by assumption (i) of the present theorem (see Remark \ref{rem:equivalent-norms}), 
and the fact that $\nu+1> 2$, we easily conclude that condition (ii) of Theorem \ref{thm:ArcoyaBereanuTorres} is satisfied for $\rho$ sufficiently small.  

{\it $\Omega$ unbounded}: In this case the argument is slightly simplified since the corresponding version of \eqref{eq:conseq-hyp-on-f} is stronger than in the previous case. 
More precisely, applying \eqref{eq:conseq-hyp-on-f} with $\tilde{\mu}=\mu$, $\alpha\in(0,1/2)$, and $\nu>1$, and proceeding as above, we obtain for every $u\in K$
\[
\begin{aligned}
J_K(u) 
&\ge  \frac{1}{2}\|u\|^2_{H^1_\lambda(\Omega)} - C\|u\|^{\mu+1}_{L^{\mu+1}(\Omega)} - \|u\|^{\nu+1}_{L^{2(\nu+1)}(\Omega)}\left(\int_\Omega(e^{2\alpha u^2}-1) \, dx\right)^{1/2}\\
&\ge \frac{1}{2}\|u\|^2_{H^1_\lambda(\Omega)} - C\left(\|u\|^{\mu+1}_V + \|u\|^{\nu+1}_V\right),
\end{aligned}
\]
where we have used \eqref{eq:sup_exp} that holds also in the unbounded case with the same proof. 
Since $\mu+1> 2$ and $\nu+1> 2$, and using again the equivalence of the norms in $K$, for $\rho$ sufficiently small condition (ii) of Theorem \ref{thm:ArcoyaBereanuTorres} is satisfied.  
\smallskip

As for (iii), let $\bar u\in K\setminus\{0\}$. By \eqref{f4new}, for every $t>0$ we have
\[
\begin{aligned}
J_K(t\bar u) 
\le \frac{t^2}{2}\|\bar u\|_{H^1_\lambda(\Omega)}^2-t^{\sigma}\int_\Omega c_1(x)|\bar u|^{\sigma}dx +\|c_2\|_{L^1(\Omega)}.
\end{aligned}
\]
Since $\sigma>2$, for $t$ large enough condition (iii) is verified with $e=t\bar{u}$.  	

In conclusion, all the assumptions of Theorem \ref{thm:ArcoyaBereanuTorres} are satisfied and so there exists a (PS)-sequence at level 
\begin{equation}\label{eq:mp-level}
c:=\inf_{\gamma\in \Gamma}\sup_{t\in[0,1]}J_K(\gamma(t))=\inf_{\gamma\in \Upsilon}\sup_{t\in [0,1]}J_K(\gamma(t))=\inf_{\gamma\in\Upsilon}\sup_{t\in[0,1]}J(\gamma(t)).
\end{equation}
Note that the first equality in \eqref{eq:mp-level} holds since $J_K(u)=+\infty$ for $u\not\in K$, even if $\Upsilon \subsetneq \Gamma=\{\gamma\in C([0,1];V)\,:\,\gamma(0)=0\neq\gamma(1),\, J_K(\gamma(1))\le 0\}$, while the last equality derives from the fact that $J_K=J$ in $K$. 
\smallskip 

\noindent\textbf{Convergence of (PS)$_c$-sequences.} 
Let us now show that every (PS)$_c$-sequence is bounded. For every $u\in K$ and $v\in V$, let
\[
G(u,v):=\Psi_K(u)-\Psi_K(v)- \Phi'(u)[u-v],
\]
so that if $v\in K$
\begin{equation}\label{eq:def_G}
G(u,v)=\frac{1}{2}\|u\|^2_{H^1_\lambda(\Omega)}-\frac{1}{2}\|v\|^2_{H^1_\lambda(\Omega)} - \int_\Omega f(x,u)(u-v)\, dx.
\end{equation}
Let $\sigma$ be as in \ref{f3} and
\begin{equation}\label{eq:gamma_delta_def}
\kappa\in (1,\sigma-1) \quad\text{ and }\quad \delta\in \left(
\frac{1}{\sigma(\kappa-1)},\frac{1}{\kappa^2-1}\right).
\end{equation}
Let $(u_n)\subset K$ be a Palais-Smale sequence at mountain pass level $c$, i.e. 
\begin{equation}\label{eq:def_PS_sulkin}
J_K(u_n)\to c\in\R, \qquad
G(u_n,v)\leq \varepsilon_n \|u_n-v\|_V \ \mbox{ for all }v\in V,
\end{equation}
with $\varepsilon_n\to0$. 
Then, taking $v= \kappa u_n$ in \eqref{eq:def_PS_sulkin} and using that $v\in K$ since $K$ is a cone, we have
\begin{equation}\label{eq:PS_calc}
J_K(u_n)+\delta G(u_n,\kappa u_n) \leq c+1 +\delta(\kappa-1) \varepsilon_n \|u_n\|_V\quad\mbox{for large $n$.}
\end{equation}
In order to have an estimate from below of the quantity $J_K(u_n)+\delta G(u_n,\kappa u_n)$, we distinguish the two cases depending on whether $\Omega$ is bounded or unbounded. If $\Omega$ is unbounded, by \eqref{eq:J_def2}, \eqref{eq:def_G}, \ref{f3}-(u), \ref{f1}, and \eqref{eq:gamma_delta_def},
\[
\begin{aligned}
J_K(u_n)+\delta G(u_n,\kappa u_n) &=
\frac12\|u_n\|^2_{H^1_\lambda(\Omega)}- \int_\Omega F(x,u_n) \, dx 
+\frac{\delta }{2} \|u_n\|^2_{H^1_\lambda(\Omega)} \\
&\phantom{==}- \frac{\delta \kappa^2}{2}\|u_n\|^2_{H^1_\lambda(\Omega)}
+\delta(\kappa-1)\int_\Omega f(x,u_n)u_n \,dx \\
&\geq \frac{1+\delta -\delta\kappa^2}{2} \|u_n\|^2_{H^1_\lambda(\Omega)}+\left[\delta(\kappa-1)-\frac{1}{\sigma}\right]\int_\Omega f(x,u_n)u_n \,dx\\
&\geq \frac{1+\delta -\delta\kappa^2}{2} \|u_n\|^2_{H^1_\lambda(\Omega)}.
\end{aligned}
\]
While, if $\Omega$ is bounded, reasoning as above, using \ref{f3}-(b) instead of \ref{f3}-(u), we get
\[
\begin{aligned}
J_K(u_n)+\delta G(u_n,\kappa u_n) &=
\frac12\|u_n\|^2_{H^1_\lambda(\Omega)} 
+\frac{\delta }{2} \|u_n\|^2_{H^1_\lambda(\Omega)} - \frac{\delta \kappa^2}{2}\|u_n\|^2_{H^1_\lambda(\Omega)}\\
&\phantom{==}+\int_{\left\{|u_n|<\overline{M}\right\}} \left[\delta(\kappa-1)f(x,u_n)u_n - F(x,u_n)\right] \, dx \\
&\phantom{==}+\int_{\left\{|u_n|\ge\overline{M}\right\}} \left[\delta(\kappa-1)f(x,u_n)u_n - F(x,u_n)\right] \, dx\\
&\geq \frac{1+\delta -\delta\kappa^2}{2} \|u_n\|^2_{H^1_\lambda(\Omega)}-C\\
&\phantom{==}+\left[\delta(\kappa-1)-\frac{1}{\sigma}\right]\int_{\{|u_n|\ge\overline{M}\}} f(x,u_n)u_n \,dx\\
&\geq \frac{1+\delta-\delta\kappa^2}{2} \|u_n\|^2_{H^1_\lambda(\Omega)} -C.
\end{aligned}
\]
In both cases, as $1+\delta -\delta\kappa^2>0$ by \eqref{eq:gamma_delta_def}, by combining the estimate from above \eqref{eq:PS_calc} with the estimate from below, and  using the equivalence of the norms $\|\cdot\|_V$ and $\|\cdot\|_{H^1_\lambda(\Omega)}$ in $K$ (see Remark \ref{rem:equivalent-norms}), we infer that $(u_n)$ is bounded in $H^1(\Omega)$. In particular, there exists $u$ such that $u_n\rightharpoonup u$ in $H^1_0(\Omega)$ up to a subsequence (which is still denoted by $(u_n)$ below). Finally, since $K$ is weakly closed, $u\in K$.

Now we prove that the convergence is strong in $H^1_0(\Omega)$. 
We take $v=u$ in the inequality in \eqref{eq:def_PS_sulkin} to get, by \eqref{eq:fun_to_0} and the boundedness of $(u_n)$,
\[
\frac{1}{2}\left(\|u_n\|_{H^1_\lambda(\Omega)}-\|u\|_{H^1_\lambda(\Omega)}\right)\le \int_\Omega f(x,u_n)(u_n-u)\, dx+\varepsilon_n\|u_n-u\|_V\to 0\mbox{ as }n\to\infty.
\]
Together with the weak lower semicontinuity of the norm, this allows us to conclude that $\|u_n\|_{H^1_\lambda}\to \|u\|_{H^1_\lambda}$, which in turn implies that $u_n\to u$ in $H^1_0(\Omega)$ by the weak convergence.
We remark that, since in general $u_n-u\not \in K$, the norm equivalence in $K$ stated in Remark \ref{rem:equivalent-norms} is not helpful to conclude that the convergence also holds with respect to the $V$-norm.
\smallskip 

\noindent\textbf{The limit point $u$ is a critical point of $J_K$ at level $c$.}
We first prove that the limit function $u$ has energy $J_K(u)=c$. To this aim, it is enough to pass in the limit in the first condition in \eqref{eq:def_PS_sulkin}, namely 
\[
J_K(u_n)=\frac12\|u_n\|_{H^1_\lambda(\Omega)}^2-\int_\Omega F(x,u_n)\, dx\to J_K(u) \quad\mbox{as }n\to\infty.
\]
We have already proved that the term involving the norm converges to the corresponding term $\frac12\|u\|_{H^1_\lambda(\Omega)}^2$, it remains to consider the term involving $F$. By the continuity of $F(x,\cdot)$, we know that $F(x,u_n)\to F(x,u)$ a.e. in $\Omega$ for a renamed subsequence. By \ref{f1} and \ref{f3}, we can dominate each  $F(x,u_n)$ as follows:
\begin{equation*}
0\le F(x,u_n)\le \begin{cases} C+\frac1\sigma  f(x,u_n) u_n\quad&\mbox{if $\Omega$ is bounded}\\
\frac1\sigma f(x,u_n) u_n\quad&\mbox{if $\Omega$ is unbounded.}
\end{cases}
\end{equation*}
Hence, combining the domination with the convergence in \eqref{eq:convergence_fun-un}, by the General Dominated Convergence Theorem we get
\[
\int_\Omega F(x,u_n)\, dx\to \int_\Omega F(x,u)\, dx \quad\mbox{as }n\to\infty.
\]
This concludes the proof of the convergence $J_K(u_n)\to J_K(u)$ thus giving $J_K(u)=c$.  

We finally prove that the limit $u$ is a critical point for $J_K$ in the sense of Definition \ref{def:critical_point}. We observe that by the definition of $\Psi_K$, if $v\in V\setminus K$, the inequality involved in Definition \ref{def:critical_point} is trivially verified. We consider $v\in K$. In this case, we will pass to the limit in the inequality part of \eqref{eq:def_PS_sulkin}. Indeed, by \eqref{eq:def_PS_sulkin} we have
\begin{equation}
\label{eq:da-passare-al-lim_derivate}
\frac{1}{2}\|u_n\|^2_{H^1_\lambda(\Omega)}-\frac{1}{2}\|v\|^2_{H^1_\lambda(\Omega)}-\int_\Omega f(x,u_n)(u_n-v)\, dx\le \varepsilon_n \|u_n-v\|_V.
\end{equation}
Letting $n\to\infty$, we easily see that the right-hand side goes to 0, by the boundedness of $(u_n)$ in $V$. In view of the convergence $\|u_n\|_{H^1_\lambda}\to \|u\|_{H^1_\lambda}$, it remains to consider the term involving the nonlinearity:
\[
\begin{aligned}
&\left|\int_\Omega f(x,u_n)(u_n-v)\, dx-\int_\Omega f(x,u)(u-v)\, dx\right|\\
&\qquad\le \left|\int_\Omega f(x,u_n)(u_n-v)\, dx-\int_\Omega f(x,u_n)(u-v)\, dx\right|\\
&\qquad\qquad +\left|\int_\Omega f(x,u_n)(u-v)\, dx-\int_\Omega f(x,u)(u-v)\, dx\right|\\
&\qquad
\le \left|\int_\Omega f(x,u_n)(u_n-u)\, dx\right| + \|f(x,u_n)-f(x,u)\|_{L^2(\Omega)}\|u-v\|_{L^2(\Omega)}.
\end{aligned}
\] 
By \eqref{eq:fun_to_0} and \eqref{eq:fun-to-fu_Lp} we have that the right-hand side converges to zero, and so, by \eqref{eq:da-passare-al-lim_derivate}, we conclude that $u$ is a critical point of $J_K$ in the sense of Definition \ref{def:critical_point}.
\smallskip 

\noindent\textbf{The critical point $u$ is a solution.} To show that $u$ solves \eqref{eq:main}, we exploit the pointwise invariance as in \cite[proof of Theorem 2.1]{CowanMoameni2022}; we report the proof here for the sake of completeness. 
On the one hand, assumption (ii) of the present theorem provides the existence of $v\in K$ such that
\begin{equation}\label{eq:linear-weak}
\int_\Omega (\nabla v\cdot \nabla \eta +\lambda v\eta ) \, dx
=\int_\Omega f(x,u)\eta \,dx
\quad\text{ for every } \eta \in V.
\end{equation}
Choosing $\eta=u-v$ in the previous equality, we obtain
\begin{equation}\label{eq:conseq-pointwise-invariance}
\int_\Omega (\nabla u\cdot \nabla v +\lambda uv ) \, dx
-\|v\|^2_{H^1_\lambda(\Omega)}
=\int_\Omega f(x,u)(u-v) \,dx.
\end{equation}
On the other hand,  being $u$ a critical point of $J_K$, by Definition \ref{def:critical_point} it holds
\[
\frac12 \|u\|^2_{H^1_\lambda(\Omega)} - \frac12 \|v\|^2_{H^1_\lambda(\Omega)} - \int_\Omega f(x,u)(u-v) \,dx\le 0.
\]
By combining the last inequality with \eqref{eq:conseq-pointwise-invariance} we obtain
\[
\|u-v\|^2_{H^1_\lambda(\Omega)} \leq 0,
\]
thus implying that $u=v$ a.e. in $\Omega$.  This, together with \eqref{eq:linear-weak}, implies that $u$ solves \eqref{eq:main}. In particular, $u$ is a critical point of the functional $J$ defined in \eqref{eq:J-def}.

\section{Application}\label{sec:proof-applied-thm}
In this section, we will prove Theorem \ref{thm:applied1} as an application of Theorem \ref{thm:abstract}.
Hereafter, let $\Omega=A$ be as in \eqref{eq:A-def}, with $0<R_0 <R_1\le \infty$ and $N\geq3$, let $\mathcal K$ be as in \eqref{eq:K-def}, and assume that the hypotheses stated in Theorem \ref{thm:applied1} are satisfied.

We first observe that, proceeding exactly as in \cite[Lemma 2.3]{BCNW2024}, one can prove the following. 
\begin{lemma}\label{le:Kcone}
The convex cone $\mathcal K$ is closed with respect to the $H^1(A)$-norm; as a consequence, it is weakly closed. 
\end{lemma}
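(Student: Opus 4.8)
The plan is to prove that $\mathcal K$ is closed in the $H^1(A)$-norm by showing that each of the three defining conditions---the dependence on $(r,\theta)$ only, the sign condition $u\ge 0$, and the monotonicity condition $\mathfrak{u}_\theta\le 0$---passes to strong $H^1$-limits; weak closedness then follows from convexity, since a convex norm-closed set in a Hilbert space is weakly closed. So let $(u_n)\subset\mathcal K$ with $u_n\to u$ in $H^1(A)$, and we must check $u\in\mathcal K$.

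First I would handle the symmetry: the set of functions in $H^1_0(A)$ depending only on $(r,\theta)$ (equivalently, invariant under the relevant subgroup of $O(N)$---rotations fixing the $x_N$-axis together with the reflection $x_N\mapsto -x_N$) is a closed linear subspace of $H^1_0(A)$, being the fixed-point set of a group acting by isometries; hence strong convergence preserves this property and $u=\mathfrak u(r,\theta)$. Second, the sign condition: $u_n\to u$ in $H^1$ implies (up to a subsequence) $u_n\to u$ a.e.\ in $A$, and $u_n\ge 0$ a.e.\ forces $u\ge 0$ a.e. The same a.e.-convergent subsequence, together with $\mathfrak u_n{}_\theta\le 0$, is what I would use for the monotonicity, but this is the delicate point and deserves its own step.

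For the monotonicity condition, the natural route is to express the constraint weakly: $\mathfrak u_\theta\le 0$ a.e.\ is equivalent to requiring $\int_A u\,\partial_\theta\varphi\,dx \ge 0$ (with the appropriate positive Jacobian weight built into the change to $(r,\theta)$ coordinates, which I would make explicit) for every nonnegative test function $\varphi$ supported in the interior and depending on $(r,\theta)$, after integrating by parts; alternatively, and perhaps more cleanly, one notes that $\nabla u_n\to\nabla u$ in $L^2(A)$, so the tangential derivative along the $\theta$-direction---which is a fixed bounded linear combination of the Cartesian partials---converges in $L^2$, and in particular a subsequence of $\mathfrak u_n{}_\theta$ converges a.e.\ to $\mathfrak u_\theta$; since each $\mathfrak u_n{}_\theta\le 0$ a.e., the limit satisfies $\mathfrak u_\theta\le 0$ a.e. I would cite \cite[Lemma 2.3]{BCNW2024} for the technical verification that the change of variables to $(r,\theta)$ is legitimate at the level of weak derivatives and that $\mathfrak u_\theta$ is well defined as a weak partial derivative for functions in this class.

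The main obstacle I anticipate is purely this change-of-variables bookkeeping in the monotonicity step: one must be careful that ``$u_n$ depends only on $(r,\theta)$ and has weak $\theta$-derivative $\mathfrak u_n{}_\theta\le 0$'' is stable under $H^1$-convergence, which requires knowing that the map $u\mapsto\mathfrak u_\theta$ is continuous (indeed it is a component of $\nabla u$ read in an orthonormal-type frame adapted to the coordinates, away from the degeneracy locus $\{x_N=0\}\cup\{x_1=\dots=x_{N-1}=0\}$, which has measure zero), and that the sign of an $L^2$-limit is inherited along an a.e.-convergent subsequence. Once this is in place, weak closedness is immediate: $\mathcal K$ is convex (sum of two functions each of the required form, sign, and monotonicity is again of that form) and norm-closed, hence weakly closed in the Hilbert space $H^1(A)$. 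This is exactly the argument of \cite[Lemma 2.3]{BCNW2024}, to which I would refer for the details.
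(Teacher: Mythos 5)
Your proposal is correct and follows the same route as the paper, which simply defers the entire argument to \cite[Lemma 2.3]{BCNW2024}: one checks that each defining condition of $\mathcal K$ (symmetry, sign, and angular monotonicity) passes to strong $H^1$-limits via a.e.\ convergence of a subsequence of $(u_n)$ and $(\nabla u_n)$, and then weak closedness follows from convexity plus norm-closedness. Your observation that $\mathfrak u_\theta/r$ is a bounded-coefficient combination of the Cartesian partials (so it converges in $L^2$ and a.e.\ along a subsequence, and shares the sign of $\mathfrak u_\theta$) is exactly the bookkeeping needed.
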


Let $r,\,\theta$ be the polar coordinates introduced in \eqref{eq:def_r-theta}. We note that the functions $u:A\to \mathbb R$ belonging to $\mathcal K$ 
can also be written in terms of the cartesian coordinates in $\mathbb R^2$
\begin{equation}\label{eq:s-t}
\xi:=r\cos\theta=\sqrt{x_1^2+\ldots+x_{N-1}^2}, \quad
\zeta:=r\sin\theta=|x_N|, 
\end{equation}
with $(\xi,\zeta) \in Q$, where
\begin{equation}\label{eq:defQ}
Q:= \{(\xi,\zeta)\in\R^2:\, R_0^2<\xi^2+\zeta^2<R_1^2, \, \xi\geq0, \, \zeta\geq0\}. 
\end{equation}
We will use the notation
\[u(x)=\mathfrak{u}(r,\theta)=\tilde{\mathfrak{u}}(\xi,\zeta).\]

\subsection{Some useful change of variable formulas}\label{subsec:chofvar}

We recall below some useful formulas which will be often used in what follows.

Changing variables in the integral, we get 
\begin{equation}\label{eq:changeintegral}
\begin{aligned}
\int_A u(x)\, dx & = 2\omega_{N-2} \int_Q \tilde{\mathfrak{u}}(\xi,\zeta) \xi^{N-2}\,d\xi \,d\zeta \\
& = 2\omega_{N-2} \int_{R_0}^{R_1} \int_0^{\frac{\pi}{2}} \mathfrak{u}(r,\theta) (\cos\theta)^{N-2} r^{N-1} \,dr\,d\theta,
\end{aligned}
\end{equation}
where $\omega_{N-2}$ denotes the surface measure of the sphere $\mathbb{S}^{N-2} \subseteq \R^{N-1}$.

Now, since 
\begin{equation}\label{eq:gradient-s-t}
u_{x_i}=
\begin{cases}
\displaystyle \frac{x_i}{\xi} \tilde{\mathfrak{u}}_\xi & \qquad i=1,\ldots,N-1 \vspace{0.2cm}\\
\displaystyle \frac{x_N}{\zeta} \tilde{\mathfrak{u}}_\zeta & \qquad i=N,
\end{cases}
\end{equation}
passing to polar coordinates, we have
\[
\tilde{\mathfrak{u}}_\xi=\cos\theta\, \mathfrak{u}_r - \frac{\sin\theta}{r}\mathfrak{u}_\theta, \qquad 
\tilde{\mathfrak{u}}_\zeta=\sin\theta\, \mathfrak{u}_r + \frac{\cos\theta}{r}\mathfrak{u}_\theta.
\]
Therefore,
\begin{equation}\label{eq:gradient-r-theta}
\vert \nabla u \vert^2 = \tilde{\mathfrak{u}}_\xi^2 + \tilde{\mathfrak{u}}_\zeta^2 = \mathfrak{u}_r^2 + \frac{\mathfrak{u}_\theta^2}{r^2}.
\end{equation}
Furthermore, we have 
\[
u_{x_ix_i}=
\begin{cases}
\displaystyle \frac{x_i^2}{\xi^2}\tilde{\mathfrak{u}}_{\xi\xi}+\frac{1}{\xi}\left(1-\frac{x_i^2}{\xi^2}\right)\tilde{\mathfrak{u}}_\xi & \qquad i=1,\ldots,N-1 \vspace{0.2cm}\\
\displaystyle \tilde{\mathfrak{u}}_{\zeta\zeta} & \qquad i=N,
\end{cases}
\]
and so, expressing the Laplacian in polar coordinates, namely
\[
\tilde{\mathfrak{u}}_{\xi\xi}+\tilde{\mathfrak{u}}_{\zeta\zeta}=\mathfrak{u}_{rr}+\frac{1}{r}\mathfrak{u}_r+\frac{1}{r^2}\mathfrak{u}_{\theta\theta},
\]
we finally get
\begin{equation}\label{eq:Deltau_s-r-th}
\begin{aligned}
\Delta u =\tilde{\mathfrak{u}}_{\xi\xi}+\tilde{\mathfrak{u}}_{\zeta\zeta}+\frac{N-2}{\xi}\tilde{\mathfrak{u}}_\xi
 = \mathfrak{u}_{rr}+ \frac{N-1}{r}\mathfrak{u}_r-\frac{(N-2)\tan\theta}{r^2}\mathfrak{u}_\theta +\frac{1}{r^2} \mathfrak{u}_{\theta\theta}.
\end{aligned}
\end{equation}

\subsection{Embeddings for $\mathcal K$}\label{subsec:verification(i)}

In this subsection, we will prove the validity of assumption (i) of Theorem \ref{thm:abstract}. Our aim is to investigate the continuous embedding 
of the cone $\mathcal K$ in the Orlicz space $\mathcal M(A)$. The idea of exploring this type of embedding is motivated by the fact that the cone $\mathcal K$ is continuously embedded in  any Lebesgue space $L^q(A)$ with $q\in [2,\infty)$, as shown in  \cite[Lemma 2.3]{BCNW2023} for the bounded case $R_1<\infty$ and in \cite[Lemma 2.4]{BCNW2024} for $R_1=\infty$.  Moreover, we do not expect that $\mathcal K$ is embedded in $L^\infty(A)$.
This suggests that there should be a smaller target Orlicz space, rather than Lebesgue spaces, for the embedding of the cone $\mathcal{K}$. 
Indeed, we will prove that, thanks to the symmetry and monotonicity properties of functions belonging to $\mathcal K$, a Pohozaev-Trudinger-Moser inequality holds in the cone and this will give the continuous embedding of $\mathcal K$ in $\mathcal M(A)$ required in (i) of Theorem \ref{thm:abstract}. This crucial inequality is deduced by the Trudinger-Moser inequality in $\mathbb R^2$ established by Ruf \cite{Ruf}, that we report below for the reader's convenience, together with a useful consequence. 

\begin{theorem}[{\cite[Theorem 1.1]{Ruf}}]\label{thm:Ruf}
It holds  
\begin{equation}\label{eq:Ruf}
  \sup_{\substack{u\in H^1(\mathbb R^2)\\\|u\|_{H^1(\mathbb R^2)}\le 1}}\int_{\mathbb R^2} (e^{4\pi u^2}-1)\, dx =: C^*  
\end{equation} 
and the exponent $4\pi$ is sharp. 
As a consequence, if $u\in H^1(\mathbb R^2)$ then 

\begin{equation}\label{eq:int-finito}
\int_{\mathbb R^2} (e^{\alpha u^2}-1) \, dx<\infty\quad\mbox{for every $\alpha>0$}.
\end{equation}
\end{theorem}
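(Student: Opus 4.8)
The plan is to treat the two assertions of the statement separately. The uniform bound \eqref{eq:Ruf}, together with the optimality of the exponent $4\pi$, is precisely Ruf's theorem \cite[Theorem 1.1]{Ruf}, so I would invoke it directly rather than reprove it; for orientation let me only recall the skeleton of that argument, since it explains where the difficulty sits: one first uses Schwarz symmetrization to reduce to radially symmetric, nonincreasing competitors, then applies Moser's logarithmic change of variables to recast the functional as a one-dimensional expression, and finally controls the loss of compactness intrinsic to $\mathbb{R}^2$ — mass escaping to infinity or concentrating — by a concentration-compactness/blow-up analysis; sharpness of $4\pi$ is then checked by testing on the classical Moser sequence of logarithmic spikes. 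The second assertion \eqref{eq:int-finito} is the one I would actually write out, by combining the density of smooth compactly supported functions in $H^1(\mathbb{R}^2)$ with a rescaling that keeps the relevant exponent at or below the Trudinger--Moser threshold $4\pi$.

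Concretely, to establish \eqref{eq:int-finito} I would fix $u\in H^1(\mathbb{R}^2)$ and $\alpha>0$ and choose $\varphi\in C^\infty_c(\mathbb{R}^2)$ with $\|u-\varphi\|_{H^1(\mathbb{R}^2)}^2\le \pi/\alpha$, which is possible by density. Setting $v:=u-\varphi$ and $\tau:=\|v\|_{H^1(\mathbb{R}^2)}$ (we may assume $v\not\equiv 0$, the case $v\equiv 0$ being trivial since then $u\in C^\infty_c$), the elementary inequalities $u^2\le 2\varphi^2+2v^2$ and $e^{A+B}-1\le \tfrac12(e^{2A}-1)+\tfrac12(e^{2B}-1)$ — the latter being a consequence of $e^{2A}+e^{2B}\ge 2e^{A+B}$ — yield the pointwise bound $e^{\alpha u^2}-1\le \tfrac12\bigl(e^{4\alpha\varphi^2}-1\bigr)+\tfrac12\bigl(e^{4\alpha v^2}-1\bigr)$ a.e. in $\mathbb{R}^2$. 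The first summand is bounded and supported in the compact set $\operatorname{supp}\varphi$, hence integrable; for the second, the choice of $\varphi$ forces $4\alpha v^2\le 4\pi (v/\tau)^2$ pointwise, so by monotonicity of $s\mapsto e^s-1$ and \eqref{eq:Ruf} applied to the unit-norm function $v/\tau$ one gets $\int_{\mathbb{R}^2}(e^{4\alpha v^2}-1)\,dx\le C^*$. Adding the two contributions gives $\int_{\mathbb{R}^2}(e^{\alpha u^2}-1)\,dx<\infty$, which is \eqref{eq:int-finito}.

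The genuinely hard input is the uniform bound \eqref{eq:Ruf} itself, whose difficulty stems entirely from the unboundedness of $\mathbb{R}^2$: on a bounded domain the analogous Trudinger--Moser inequality is classical, whereas on the whole plane one must rule out a concentration of the $L^2$-mass of the gradient together with a simultaneous escape to infinity, which is the delicate compactness step; this is what I would quote from \cite{Ruf} rather than redo. In the proof of \eqref{eq:int-finito} the only point that needs care is the quantitative coupling between the approximation error and the threshold — one must take $\varphi$ close enough to $u$ that $4\alpha\|u-\varphi\|_{H^1}^2\le 4\pi$ — and it is exactly the admissibility of the exponent $4\pi$ in \eqref{eq:Ruf} that makes this rescaling legitimate for every $\alpha>0$.
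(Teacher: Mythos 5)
Your proposal is correct and follows essentially the same route as the paper: quote Ruf's theorem for \eqref{eq:Ruf}, then derive \eqref{eq:int-finito} by approximating $u$ in $H^1(\mathbb{R}^2)$ with a compactly supported smooth $\varphi$ chosen so that $4\alpha\|u-\varphi\|_{H^1}^2\le 4\pi$, splitting the exponential via $e^{A}e^{B}\le\tfrac12(e^{2A}+e^{2B})$, and applying \eqref{eq:Ruf} to the normalized remainder. The only cosmetic difference is that the paper bounds the $\varphi$-term using $e^s-1\le se^s$ rather than observing directly that it is bounded with compact support; both are fine.
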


\begin{proof} The first part of the statement was established in \cite{Ruf}. Here, we focus specifically on deriving the consequence stated in \eqref{eq:int-finito}.
Let $u\in H^1(\mathbb R^2)$, $\alpha>0$, and $\varepsilon\in(0,\sqrt{\pi/\alpha})$ be fixed. By density, there exists $\varphi_\varepsilon\in C^\infty_0(\mathbb R^2)$ such that $\|u-\varphi_\varepsilon\|_{H^1(\mathbb R^2)}\le\varepsilon$. Therefore, since $\alpha u^2 = \alpha(u - \varphi_\varepsilon + \varphi_\varepsilon)^2\le 2\alpha(u - \varphi_\varepsilon)^2+2\alpha\varphi_\varepsilon^2$, by Young's inequality, and by \eqref{eq:Ruf} we get
\[
\begin{aligned}
\int_{\mathbb R^2} (e^{\alpha u^2}-1) \, dx &\le \int_{\mathbb R^2}(e^{2\alpha (u - \varphi_\varepsilon)^2}e^{2\alpha \varphi_\varepsilon^2}-1) \, dx \\
&\le \frac{1}{2}\int_{\mathbb R^2} (e^{4\alpha(u - \varphi_\varepsilon)^2}-1)\, dx + \frac{1}{2}\int_{\mathbb R^2} (e^{4\alpha \varphi_\varepsilon^2}-1)\, dx\\
&\le \frac{1}{2}\int_{\mathbb R^2} \left(e^{4\alpha\varepsilon^2\left(\frac{u - \varphi_\varepsilon}{\|u - \varphi_\varepsilon\|_{H^1(\mathbb R^2)}}\right)^2}-1\right)\, dx + 2\alpha\int_{\mathbb R^2} \varphi_\varepsilon^2 e^{4\alpha\varphi_\varepsilon^2}\, dx\\
&\le \frac{1}{2}C^*+ 2\alpha e^{4\alpha\|\varphi_\varepsilon\|^2_{L^\infty(\Omega)}}\|\varphi_\varepsilon\|^2_{L^2(\Omega)}<\infty,
\end{aligned}
\]
where in the third step we have used \eqref{eq:second-favourite-ineq}.
This concludes the proof.
\end{proof}

We are now ready to prove the validity of a Pohozaev-Trudinger-Moser type inequality in $\mathcal K$. 

\begin{proposition}\label{cor:Trud-Moser1}
There exist $\alpha^*>0$ and $C^*_\mathcal K:=C^*_\mathcal K(R_0,N)>0$ such that for all $\alpha\le \alpha^*$
\begin{equation}\label{eq:Trud-Moser-type}
\sup_{\substack{u\in\mathcal K\\ \|u\|_{H^1(A)}\le 1}}\int_A (e^{\alpha u^2}-1)\, dx \le C^*_{\mathcal K}.
\end{equation}
As a consequence, if $u\in \mathcal K$ then

\begin{equation}\label{eq:int-finito-K}
\int_A (e^{\alpha u^2}-1) \, dx<\infty\quad\mbox{for every $\alpha>0$}.
\end{equation} 
\end{proposition}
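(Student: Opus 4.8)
The plan is to prove the uniform bound \eqref{eq:Trud-Moser-type} first and then obtain \eqref{eq:int-finito-K} from it by truncation, just as \eqref{eq:int-finito} is obtained from \eqref{eq:Ruf}. Fix $u\in\mathcal K$ with $\|u\|_{H^1(A)}\le 1$ and write $u=\mathfrak u(r,\theta)=\tilde{\mathfrak u}(\xi,\zeta)$ as in \eqref{eq:s-t}; by the change of variables \eqref{eq:changeintegral} and \eqref{eq:gradient-r-theta},
\[
\int_A(e^{\alpha u^2}-1)\,dx=2\omega_{N-2}\!\int_Q (e^{\alpha\tilde{\mathfrak u}^2}-1)\,\xi^{N-2}\,d\xi\,d\zeta,\qquad \frac{\|u\|_{H^1(A)}^2}{2\omega_{N-2}}=\int_Q(|\nabla\tilde{\mathfrak u}|^2+\tilde{\mathfrak u}^2)\,\xi^{N-2}\,d\xi\,d\zeta ,
\]
so the whole difficulty is the weight $\xi^{N-2}$, which degenerates as $\theta\to\pi/2$ and is unbounded as $r\to\infty$ when $R_1=\infty$. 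I would split $Q$ along $\theta=\pi/4$ into $Q_0:=Q\cap\{\zeta\le\xi\}$ and $Q\setminus Q_0$. On $Q\setminus Q_0$ (i.e.\ $\pi/4<\theta<\pi/2$) the weight is harmless but the region may be unbounded, so I would use the angular monotonicity to get an $L^\infty$ bound: with $g(\theta):=\int_0^\theta(\cos s)^{N-2}\,ds$ and $\phi(r):=\int_0^{\pi/2}\mathfrak u(r,s)^2(\cos s)^{N-2}\,ds$, the conditions $\mathfrak u\ge0$, $\mathfrak u_\theta\le0$ give $\mathfrak u(r,\theta)^2 g(\theta)\le\phi(r)$ for a.e.\ $r$, while the representation $\mathfrak u(r,\theta)=\int_{R_0}^r\mathfrak u_r(s,\theta)\,ds$ (valid since $u\in H^1_0(A)$ vanishes on $\{|x|=R_0\}$), Cauchy--Schwarz, and $N\ge 3$ (so that $\int_{R_0}^\infty s^{1-N}\,ds<\infty$) yield $\phi(r)\le C_0(N,R_0)\|u\|_{H^1(A)}^2$ for every $r$. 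Hence $\mathfrak u^2\le C_0/g(\pi/4)=:M_0(N,R_0)$ on $Q\setminus Q_0$, and since $e^t-1\le\frac{e^{\alpha M_0}-1}{\alpha M_0}\,t$ for $0\le t\le\alpha M_0$, the contribution of this region to $\int_A(e^{\alpha u^2}-1)\,dx$ is at most $\frac{e^{\alpha M_0}-1}{M_0}\|u\|_{L^2(A)}^2\le\frac{e^{\alpha M_0}-1}{M_0}$.

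On $Q_0$ one has $\xi=r\cos\theta\ge R_0/\sqrt2$, so the weight is bounded below and I would pass to $w:=\tilde{\mathfrak u}\,\xi^{(N-2)/2}$. The product rule and $\xi\ge R_0/\sqrt2$ give $|\nabla w|^2\le 2|\nabla\tilde{\mathfrak u}|^2\xi^{N-2}+\frac{(N-2)^2}{R_0^2}\tilde{\mathfrak u}^2\xi^{N-2}$ on $Q_0$ — the Hardy-type term $\xi^{N-4}$ is absorbed exactly because $\xi$ is bounded away from $0$ — whence
\[
\int_{Q_0}(|\nabla w|^2+w^2)\,d\xi\,d\zeta\le C_1(N,R_0)\int_Q(|\nabla\tilde{\mathfrak u}|^2+\tilde{\mathfrak u}^2)\,\xi^{N-2}\,d\xi\,d\zeta\le C_2(N,R_0).
\]
In polar coordinates of the $(\xi,\zeta)$-plane $Q_0$ is the wedge $\{R_0<\rho<R_1,\ 0\le\theta\le\pi/4\}$, and $w$ vanishes on its circular parts (since $\tilde{\mathfrak u}=0$ on $\{|x|\in\{R_0,R_1\}\}$); since $\pi/4$ divides $2\pi$, reflecting $w$ evenly across the rays $\{\theta=0\}$ and $\{\theta=\pi/4\}$ (the dihedral group of order $8$) and extending by zero produces $\tilde w\in H^1(\mathbb R^2)$ with $\|\tilde w\|_{H^1(\mathbb R^2)}^2=8\,\|w\|_{H^1(Q_0)}^2\le 8C_2$, and Ruf's inequality \eqref{eq:Ruf}, after rescaling, gives $\int_{Q_0}(e^{\gamma w^2}-1)\,d\xi\,d\zeta\le C^*$ with $\gamma:=\pi/(2C_2)>0$ — an estimate that does not see $R_1$. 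Finally, using $\tilde{\mathfrak u}^2\xi^{N-2}=w^2$ and $\tilde{\mathfrak u}^2\le(\sqrt2/R_0)^{N-2}w^2$ on $Q_0$, together with $e^t-1\le te^t$ (cf.\ \eqref{eq:second-favourite-ineq}) and $w^2\le\frac{2}{\gamma}(e^{(\gamma/2)w^2}-1)$,
\[
\int_{Q_0}(e^{\alpha\tilde{\mathfrak u}^2}-1)\,\xi^{N-2}\,d\xi\,d\zeta=\int_{Q_0}\frac{e^{\alpha\tilde{\mathfrak u}^2}-1}{\alpha\tilde{\mathfrak u}^2}\,\alpha w^2\,d\xi\,d\zeta\le\alpha\int_{Q_0}e^{\beta w^2}w^2\,d\xi\,d\zeta\le\frac{2\alpha}{\gamma}\int_{Q_0}(e^{(\beta+\gamma/2)w^2}-1)\,d\xi\,d\zeta,
\]
with $\beta:=\alpha(\sqrt2/R_0)^{N-2}$ (the quotient read as its continuous extension where $\tilde{\mathfrak u}=0$); for $\alpha\le\alpha^*:=\frac{\gamma}{2}(R_0/\sqrt2)^{N-2}$ one has $\beta+\gamma/2\le\gamma$, so the last integral is $\le C^*$. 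Adding the contributions of the two regions yields \eqref{eq:Trud-Moser-type} with $C^*_{\mathcal K}=C^*_{\mathcal K}(N,R_0)$.

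For \eqref{eq:int-finito-K}, fix $u\in\mathcal K$ and $\alpha>0$ and split $\int_A(e^{\alpha u^2}-1)\,dx=\int_{\{u\le M\}}+\int_{\{u>M\}}$. The first term is $\le\frac{e^{\alpha M^2}-1}{M^2}\|u\|_{L^2(A)}^2<\infty$. On $\{u>M\}$ write $u=M+(u-M)^+$, so $e^{\alpha u^2}-1\le\tfrac12(e^{4\alpha M^2}-1)+\tfrac12(e^{4\alpha((u-M)^+)^2}-1)$; since $|\{u>M\}|<\infty$ by Chebyshev, $(u-M)^+\in\mathcal K$, and $\|(u-M)^+\|_{H^1(A)}\to0$ as $M\to\infty$, choosing $M$ so large that $4\alpha\|(u-M)^+\|_{H^1(A)}^2\le\alpha^*$ and applying \eqref{eq:Trud-Moser-type} to $(u-M)^+/\|(u-M)^+\|_{H^1(A)}\in\mathcal K$ makes the remaining integral finite.

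The main obstacle throughout is the weight $\xi^{N-2}$. Its degeneracy at $\theta=\pi/2$ is what forces one to exploit the monotonicity built into $\mathcal K$, and its unboundedness when $R_1=\infty$ is what prevents a naive reduction to a two-dimensional Trudinger--Moser inequality, requiring instead both the substitution $w=\tilde{\mathfrak u}\,\xi^{(N-2)/2}$ on the sector where $\xi$ is bounded below and the inequality $e^t-1\le te^t$ to reabsorb the residual weight into $\|w\|_{L^2}^2$; it is precisely here that Ruf's version of the inequality on all of $\mathbb R^2$, rather than the classical one on a bounded domain, is indispensable, since that sector is unbounded.
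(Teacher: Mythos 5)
Your proof is correct, and its engine is the same as the paper's: split the quarter-annulus at a fixed angle, absorb the weight $\xi^{N-2}$ on the low-angle sector via the substitution $w=\xi^{(N-2)/2}\tilde{\mathfrak u}$ (the paper's $\tilde{\mathfrak v}$, with the same Hardy-type absorption of the $\xi^{N-4}$ term thanks to $\xi\geq cR_0$), and conclude with Ruf's inequality \eqref{eq:Ruf} on all of $\R^2$ — which, as you note, is indispensable because the sector is unbounded when $R_1=\infty$. Several individual steps, however, are implemented genuinely differently. (a) For the high-angle sector the paper only uses the monotonicity $\mathfrak u_\theta\le 0$ to shift the angle by $\pi/4$ and bound $I_{Q_{\pi/3}^c}$ by $I_{Q_{\pi/3}}$, so that only the low-angle sector needs to be estimated; you instead combine the angular monotonicity with a Strauss/Ni-type radial estimate ($\mathfrak u(r,\theta)=\int_{R_0}^r\mathfrak u_r$, Cauchy--Schwarz against $s^{N-1}$, $N\ge3$) to get a uniform $L^\infty$ bound there, after which the exponential is linearized against $\|u\|_{L^2}^2$. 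This is a bit longer but yields extra information (a pointwise bound in $\mathcal K$ away from the pole) that the paper's comparison does not. (b) You replace the paper's appeal to an abstract extension operator $T:H^1(Q_{\pi/3})\to H^1(\R^2)$ (\cite[Theorem 5.24]{AF}) by an explicit even dihedral reflection of the wedge, which is more self-contained and gives the explicit constant $8$ in place of the unspecified $\bar c$. (c) Your comparison $(e^{\alpha\tilde{\mathfrak u}^2}-1)\xi^{N-2}\le \tfrac{2\alpha}{\gamma}(e^{(\beta+\gamma/2)w^2}-1)$ via $e^t-1\le te^t$ loses a harmless factor compared with the paper's term-by-term series estimate \eqref{eq:est-exp-s}, but serves the same purpose. (d) For \eqref{eq:int-finito-K} the paper extends to $\R^2$ and invokes \eqref{eq:int-finito}; you instead truncate at height $M$ and apply the already-proved uniform bound to $(u-M)^+\in\mathcal K$, exploiting that $\|(u-M)^+\|_{H^1(A)}\to0$ — this is the natural adaptation, since smooth functions are not obviously dense in the cone. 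All steps check out; only make sure to phrase the identity $\mathfrak u(r,\theta)=\int_{R_0}^r\mathfrak u_r(s,\theta)\,ds$ as holding for a.e.\ $\theta$ via the absolute continuity on lines of $H^1_0$-functions and the vanishing trace at $|x|=R_0$.
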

\begin{proof} 
Let $Q$ be as in \eqref{eq:defQ} and let
\[
\begin{aligned}
Q_{\frac{\pi}{3}}:=&\left\{(\xi,\zeta)\in Q\,:\, \arctan\frac{\zeta}{\xi}\in\left(0,\frac{\pi}{3}\right)\right\}\\&\quad=\left\{(r,\theta)\in \mathbb R^2\,:\, R_0<r<R_1,\,\theta\in\left(0,\frac{\pi}{3}\right)\right\}.
\end{aligned}
\]
Notice that 
\begin{equation}
\label{eq:s>}
\xi>\frac{R_0}{2}\quad \mbox{ for all }(\xi,\zeta)\in Q_{\frac{\pi}{3}}. 
\end{equation}
Now, let $\alpha>0$ and $u\in\mathcal K$, with $\|u\|_{H^1(A)}\le 1$, then using \eqref{eq:changeintegral} we get 
\begin{equation}\label{eq:estimating-int}
\begin{aligned}
\int_A (e^{\alpha u^2}-1)\, dx &= 2\omega_{N-2}\int_{R_0}^{R_1}\int_0^{\frac{\pi}{3}}(e^{\alpha\mathfrak u^2}-1)(\cos\theta)^{N-2} r^{N-1}dr \,d\theta\\
& \quad + 2\omega_{N-2}\int_{R_0}^{R_1}\int_{\frac{\pi}{3}}^{\frac{\pi}{2}}(e^{\alpha\mathfrak u^2}-1)(\cos\theta)^{N-2} r^{N-1}dr \,d\theta\\
& =: I_{Q_{\frac{\pi}{3}}} + I_{Q_{\frac{\pi}{3}}^c}.
\end{aligned}
\end{equation}
We observe that, by the angular monotonicity of $u\in\mathcal{K}$,
\[
\begin{aligned}
I_{Q^c_{\frac{\pi}{3}}} &\le 2\omega_{N-2}\int_{R_0}^{R_1}\int_{\frac{\pi}{3}}^{\frac{\pi}{2}}\left(e^{\alpha \left(\mathfrak u(r,\theta-\frac{\pi}{4})\right)^2 }-1\right)\left(\cos\left(\theta-\frac{\pi}{4}\right)\right)^{N-2} r^{N-1}dr\,d\theta
\\
&= 2\omega_{N-2}\int_{R_0}^{R_1}\int_{\frac{\pi}{12}}^{\frac{\pi}{4}}\left(e^{\alpha(\mathfrak u(r,\theta))^2}-1\right)(\cos\theta)^{N-2} r^{N-1}dr\le I_{Q_{\frac{\pi}{3}}}.
\end{aligned}
\]
Hence, by \eqref{eq:estimating-int} and using again \eqref{eq:changeintegral}, we have 
\begin{equation*}
\begin{aligned}
\int_A (e^{\alpha u^2}-1)\, dx\le 2I_{ Q_{\frac{\pi}{3}}}=4\omega_{N-2}\int_{Q_\frac{\pi}{3}}(e^{\alpha\tilde{\mathfrak u}^2}-1)\xi^{N-2}\,d\xi \,d\zeta.
\end{aligned}
\end{equation*}
We introduce now $\mathfrak{\tilde{v}}(\xi,\zeta):=\xi^{m} \tilde{\mathfrak{u}}(\xi,\zeta)$, with $m:=(N-2)/2$. In view of \eqref{eq:s>}, we can estimate in $Q_{\frac{\pi}{3}}$
\begin{equation}
\label{eq:est-exp-s}
\begin{aligned}
(e^{\alpha \tilde{\mathfrak{u}}^2}-1)\xi^{N-2}&=(e^{\alpha \xi^{-2m}\mathfrak{\tilde{v}}^2}-1)\xi^{N-2}=\xi^{N-2}\sum_{i=1}^\infty\frac{(\alpha \mathfrak{\tilde{v}}^2)^i}{i!}\xi^{-(N-2)i}\\
&\le \left(\frac{R_0}{2}\right)^{N-2}\sum_{i=1}^\infty\frac{(\alpha \mathfrak{\tilde{v}}^2)^i}{i!}\left(\frac{R_0}{2}\right)^{-(N-2)i}\\
&= \left(\frac{R_0}{2}\right)^{N-2}\left(e^{\bar\alpha\mathfrak{\tilde{v}}^2}-1\right),
\end{aligned}
\end{equation}
where $\bar\alpha:=\alpha(2/R_0)^{N-2}$, and so 
\begin{equation}
\label{eq:u-v}
\int_A (e^{\alpha u^2}-1)\, dx\le 4\omega_{N-2}\left(\frac{R_0}{2}\right)^{N-2}\int_{Q_{\frac{\pi}{3}}}(e^{\bar\alpha \mathfrak{\tilde{v}}^2}-1)\,d\xi\, d\zeta.
\end{equation}
In order to provide a uniform estimate of the right hand side of the previous inequality, we will use the Pohozaev-Trudinger-Moser inequality \eqref{eq:Ruf} in $\mathbb R^2$. To this aim, we need both an estimate on the $H^1$-norm of $\mathfrak{\tilde{v}}$, and to extend $\mathfrak{\tilde{v}}$ on the whole $\R^2$. 
Concerning the $H^1$-norm of $\mathfrak{\tilde{v}}$, since 
\[
|\nabla \mathfrak{\tilde{v}}|^2=\xi^{2(m-1)}\left[(m\tilde{\mathfrak u}+\xi\tilde{\mathfrak u}_\xi)^2+\xi^2\tilde{\mathfrak u}_\zeta^2\right]\le 2(\xi^{2m}|\nabla \tilde{\mathfrak{u}}|^2+m^2 \xi^{2(m-1)}\tilde{\mathfrak{u}}^2),
\]
using the definition of $m$, \eqref{eq:s>},  and \eqref{eq:changeintegral}, we get
\begin{equation}\label{eq:gradient_v_est}
\begin{aligned}
\int_{Q_{\frac{\pi}{3}}}&(|\nabla \mathfrak{\tilde{v}}|^2+\mathfrak{\tilde{v}}^2)\,d\xi \,d\zeta\\
&\le 2\int_{Q_{\frac{\pi}{3}}} \left[\xi^{2m-(N-2)}|\nabla \tilde{\mathfrak{u}}|^2+\left(m^2 \xi^{2(m-1)-(N-2)}+\xi^{2m-(N-2)}\right)\tilde{\mathfrak{u}}^2\right] \xi^{N-2} \,d\xi \,d\zeta\\
&\le 2\int_{Q_{\frac{\pi}{3}}}\left[|\nabla \tilde{\mathfrak{u}}|^2+\left(\left(\frac{N-2}{R_0}\right)^2+1\right)\tilde{\mathfrak{u}}^2\right]\xi^{N-2}\,d\xi \,d\zeta\\
&\le \frac{1}{\omega_{N-2}}\left[\left(\frac{N-2}{R_0}\right)^2+1\right]\int_{A}(|\nabla u|^2+u^2)\, dx \le \bar k,
\end{aligned}
\end{equation}
where $\bar k:=\frac{1}{\omega_{N-2}}\left[\left(\frac{N-2}{R_0}\right)^2+1\right]$, as $\|u\|_{H^1(A)}\le 1$.
Now, by  \cite[Theorem 5.24]{AF}, there exists a continuous extension operator $T: H^1(Q_{\frac{\pi}{3}})\to H^1(\mathbb R^2)$, so that 
\begin{equation}\label{eq:extensionR2}
\|T\mathfrak{\tilde{v}}\|_{H^(\mathbb R^2)}\le \bar c\|\mathfrak{\tilde{v}}\|_{H^1(Q_{\frac{\pi}{3}})},
\end{equation}
with $\bar c$ only depending on the region $Q_{\frac{\pi}{3}}$.
Therefore,  by \eqref{eq:gradient_v_est},
\[
\begin{aligned}
\int_{Q_{\frac{\pi}{3}}}(e^{\bar\alpha\mathfrak{\tilde{v}}^2}-1)\,d\xi \,d\zeta &
\le \int_{Q_{\frac{\pi}{3}}}\left[
\mathrm{exp}\left(\frac{\bar\alpha\bar k\mathfrak{\tilde{v}}^2}{\|\mathfrak{\tilde{v}}\|^2_{H^1(Q_{\frac{\pi}{3}})}}\right)-1\right]\,d\xi \,d\zeta\\
&\le \int_{\mathbb R^2}\left[
\mathrm{exp}\left(\frac{\bar\alpha\bar k\bar c^2 (T\mathfrak{\tilde{v}})^2}{\|T\mathfrak{\tilde{v}}\|^2_{H^1(\mathbb R^2)}}\right)-1\right]\,d\xi \,d\zeta,
\end{aligned}
\]
with $\bar c$ as in \eqref{eq:extensionR2}.
Now, if we choose $\alpha>0$ so small that
\[\bar\alpha\bar k\bar c^2=\alpha\left(\frac{2}{R_0}\right)^{N-2}\bar k\bar c^2\le 4\pi,\]
then \eqref{eq:Ruf} provides
\[
\int_{Q_{\frac{\pi}{3}}}(e^{\bar\alpha\mathfrak{\tilde{v}}^2}-1)\,d\xi \,d\zeta \le C^*.
\]
Altogether, in view of \eqref{eq:u-v},  relation \eqref{eq:Trud-Moser-type} holds true with 
\[
C^*_\mathcal K = 4\omega_{N-2}C^*\left(\frac{R_0}{2}\right)^{N-2}.
\]

Finally, to prove \eqref{eq:int-finito-K} let $u\in \mathcal K$ and $\alpha>0$. We can argue as above to get
\begin{equation*}
\int_A (e^{\alpha u^2}-1)\, dx \le 4\omega_{N-2}\int_{Q_{\frac{\pi}{3}}}(e^{\alpha \tilde{\mathfrak{u}}^2}-1)\xi^{N-2} \,d\xi\,d\zeta
\end{equation*}
and then, considering again $\mathfrak{\tilde{v}}(\xi,\zeta)=\xi^{m}\tilde{\mathfrak{u}}(\xi,\zeta)$, we know that $\mathfrak{\tilde{v}}\in H^1(Q_{\frac{\pi}{3}})$ and that there exists an extension $T\mathfrak{\tilde{v}}\in H^1(\mathbb R^2)$. Consequently, reasoning as in \eqref{eq:u-v}, we obtain
\begin{equation*}
\begin{aligned}
\int_A (e^{\alpha u^2}-1)\, dx &\le 4\omega_{N-2}\left(\frac{R_0}{2}\right)^{N-2}\int_{Q_{\frac{\pi}{3}}}(e^{\bar\alpha \mathfrak{\tilde{v}}^2}-1) \,d\xi\,d\zeta\\
&\le 4\omega_{N-2}\left(\frac{R_0}{2}\right)^{N-2}\int_{\mathbb R^2}(e^{\bar\alpha (T\mathfrak{\tilde{v}})^2}-1) \,d\xi\,d\zeta,
\end{aligned}
\end{equation*}
with $\bar\alpha=\alpha(2/R_0)^{N-2}$ as before. Finally, applying \eqref{eq:int-finito} to $T\mathfrak{\tilde{v}}$, we conclude the proof.
\end{proof}

\begin{remark}
We haven't investigated yet neither the optimality of the growth $e^{u^2}$ nor the value of sharp exponent $\alpha^*$ for the validity of inequality \eqref{eq:Trud-Moser-type}. Therefore the problem of finding the maximal growth for integrability of functions in $\mathcal K$ or the sharp Trudinger-Moser inequality in $\mathcal K$ is still open. However, this will not be needed for our purposes.
\end{remark}

It is known that Pohozaev-Trudinger-Moser inequalities imply the validity of the embeddings in the corresponding Orlicz spaces.
For completeness, we report below all the details of the embedding descending from Proposition \ref{cor:Trud-Moser1}. We start with a preliminary lemma.
 
\begin{lemma}\label{lem:expo_order}
Let $\alpha,C>0$. There exists $\tilde\alpha\in (0,\alpha)$ such that
\begin{equation}\label{eq:expo_order}
e^{\tilde\alpha s^2}-1 \le \frac{e^{\alpha s^2}-1}{C}
\quad\mbox{for all }s\in \mathbb{R}.
\end{equation}
\end{lemma}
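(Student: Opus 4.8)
The plan is to reduce \eqref{eq:expo_order} to a term-by-term comparison of the power series of the two exponentials. Since both sides depend on $s$ only through $s^2$, it is enough to prove the inequality for $t:=s^2\ge 0$, i.e. to find $\tilde\alpha\in(0,\alpha)$ with
\[
e^{\tilde\alpha t} - 1 \le \frac{e^{\alpha t}-1}{C}\qquad\text{for all } t\ge 0 .
\]
Expanding the exponentials, this reads $\sum_{i=1}^\infty \frac{\tilde\alpha^i}{i!}t^i \le \frac1C\sum_{i=1}^\infty \frac{\alpha^i}{i!}t^i$ for all $t\ge 0$, so it suffices to require $\tilde\alpha^i\le \alpha^i/C$ for every $i\ge 1$, that is $(\tilde\alpha/\alpha)^i\le 1/C$ for every $i\ge 1$. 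Since we will take $\tilde\alpha<\alpha$, the sequence $i\mapsto(\tilde\alpha/\alpha)^i$ is nonincreasing, hence it is enough to impose this bound for $i=1$, namely $\tilde\alpha\le \alpha/C$.

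Therefore I would simply set $\tilde\alpha:=\alpha/\max\{2,C\}\in(0,\alpha)$: this choice fulfils both $\tilde\alpha<\alpha$ and $\tilde\alpha\le\alpha/C$, so the term-by-term inequality above gives the displayed estimate for every $t\ge 0$, and replacing $t$ by $s^2$ yields \eqref{eq:expo_order} for all $s\in\mathbb R$.

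There is essentially no serious obstacle: the only point to settle is the choice of $\tilde\alpha$, and the power-series viewpoint makes transparent why the condition $\tilde\alpha\le\alpha/C$ is the natural one. Alternatively one could argue by monotonicity, checking that $g(t):=\frac{e^{\alpha t}-1}{C}-\bigl(e^{\tilde\alpha t}-1\bigr)$ satisfies $g(0)=0$ and $g'(t)=\frac{\alpha}{C}e^{\alpha t}-\tilde\alpha e^{\tilde\alpha t}\ge 0$ for $t\ge 0$ whenever $\tilde\alpha\le\alpha/C$, but the series comparison is shorter.
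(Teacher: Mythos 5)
Your proof is correct, and it takes a genuinely different and in fact shorter route than the paper's. The paper argues qualitatively in three regions: it first fixes $\beta<\min\{\alpha,\alpha/C\}$ and uses the Taylor expansion at the origin and the behaviour at infinity to get the inequality for $|s|\le\delta$ and $|s|\ge M$, and then shrinks $\tilde\alpha$ further so that the (bounded) left-hand side on the compact middle range $\delta\le|s|\le M$ is dominated by the constant $(e^{\alpha\delta^2}-1)/C$; the resulting $\tilde\alpha$ depends implicitly on $\delta$ and $M$. You instead compare the two power series term by term: writing $t=s^2$, the inequality $\tilde\alpha^i\le\alpha^i/C$ for all $i\ge1$ suffices, and since $(\tilde\alpha/\alpha)^i$ is nonincreasing in $i$ once $\tilde\alpha<\alpha$, only the $i=1$ condition $\tilde\alpha\le\alpha/C$ matters. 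This yields the explicit admissible choice $\tilde\alpha=\alpha/\max\{2,C\}$, which is cleaner and more informative than the paper's non-constructive $\tilde\alpha$; it is also consonant with the series-expansion inequality $e^{s^2}-1\le\varepsilon\bigl(e^{(s/\varepsilon)^2}-1\bigr)$ that the paper itself invokes in Lemma 3.2. The paper's three-region argument would adapt to more general Young functions where a clean coefficientwise comparison is unavailable, but for this specific pair of exponentials your argument is preferable. Your fallback via the derivative of $g(t)=\frac{e^{\alpha t}-1}{C}-(e^{\tilde\alpha t}-1)$ is also valid under the same condition $\tilde\alpha\le\alpha/C$.
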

\begin{proof}
Let $\beta\in \left(0,\min\left\{ \alpha,\alpha/C \right\}\right)$. There exist $\delta,M>0$ such that
\begin{equation}\label{eq:expo_order_aux}
e^{\beta s^2}-1 \le \frac{e^{\alpha s^2}-1}{C}
\quad\mbox{for all } |s|\le\delta \mbox{ and } |s|\geq M.
\end{equation}
Indeed, the existence of $\delta >0$ follows from the Taylor expansion at the origin of the functions on the two sides of the inequality, while the existence of $M>0$ is a consequence of their behaviour at infinity.
Given such $\delta,\,M$, there exists $\tilde\alpha\in \left(0,\beta\right)$ sufficiently small satisfying
\[
e^{\tilde\alpha M^2}-1 \le \frac{e^{\alpha \delta^2}-1}{C},
\]
because $(e^{\alpha \delta^2}-1)/C>0$ and $\lim_{b\to0^+} (e^{b M^2}-1)=0$. Then it follows that
\[
e^{\tilde\alpha s^2}-1\le e^{\tilde\alpha M^2}-1\le \frac{e^{\alpha\delta^2}-1}{C} \le  \frac{e^{\alpha s^2}-1}{C}\quad\mbox{for all }\delta\le|s|\le M.
\]
Since also \eqref{eq:expo_order_aux} holds with $\tilde\alpha$ in place of $\beta$ in view of the monotonicity of the exponential function, this leads to \eqref{eq:expo_order}.
\end{proof}

\begin{proposition}\label{prop:cont-embedding}
The cone $\mathcal K$ is continuously embedded in $\mathcal M(A)$, that is $\mathcal K \subset \mathcal M(A)$ and there exists $C>0$ such that 
\[
\|u\|_{\mathcal L(A)}\le C\|u\|_{H^1_\lambda(A)}\quad\mbox{for all }u\in\mathcal K.
\]
\end{proposition}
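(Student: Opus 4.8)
The plan is to deduce the statement directly from the Pohozaev--Trudinger--Moser inequality \eqref{eq:Trud-Moser-type} in $\mathcal K$, by a scaling argument combined with Lemma \ref{lem:expo_order}. There is no real difficulty here once Proposition \ref{cor:Trud-Moser1} is available.

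First, the inclusion $\mathcal K\subset\mathcal M(A)$ is immediate: given $u\in\mathcal K$ and $k>0$, applying \eqref{eq:int-finito-K} with $\alpha=1/k^2$ gives $\int_A(e^{(u/k)^2}-1)\,dx<\infty$, hence $u\in\mathcal M(A)$ by the very definition \eqref{eq:M-def}.

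For the continuous embedding it suffices to produce $C>0$ with $\|u\|_{\mathcal L(A)}\le C\|u\|_{H^1(A)}$ for every $u\in\mathcal K$, since on $H^1_0(A)$ the norms $\|\cdot\|_{H^1}$ and $\|\cdot\|_{H^1_\lambda}$ are equivalent: when $\lambda>0$ this follows from the elementary comparison $\min\{1,\lambda\}\,\|\cdot\|_{H^1}^2\le\|\cdot\|_{H^1_\lambda}^2\le\max\{1,\lambda\}\,\|\cdot\|_{H^1}^2$, while when $\lambda=0$ the domain $A$ is bounded and one invokes the Poincaré inequality. So let $u\in\mathcal K\setminus\{0\}$ and set $v:=u/\|u\|_{H^1(A)}\in\mathcal K$, so that $\|v\|_{H^1(A)}=1$. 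By Proposition \ref{cor:Trud-Moser1} applied with $\alpha=\alpha^*$ one has $\int_A(e^{\alpha^* v^2}-1)\,dx\le C^*_{\mathcal K}$. I would then apply Lemma \ref{lem:expo_order} with $\alpha=\alpha^*$ and $C=\max\{1,C^*_{\mathcal K}\}$ to obtain $\tilde\alpha\in(0,\alpha^*)$ such that $e^{\tilde\alpha s^2}-1\le (e^{\alpha^* s^2}-1)/\max\{1,C^*_{\mathcal K}\}$ for all $s\in\mathbb R$; integrating, $\int_A(e^{\tilde\alpha v^2}-1)\,dx\le 1$. By the definition of the Luxemburg norm (with the choice $k=1/\sqrt{\tilde\alpha}$), this gives $\|v\|_{\mathcal L(A)}\le 1/\sqrt{\tilde\alpha}$, whence, by homogeneity, $\|u\|_{\mathcal L(A)}=\|u\|_{H^1(A)}\,\|v\|_{\mathcal L(A)}\le \tilde\alpha^{-1/2}\|u\|_{H^1(A)}$. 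Together with the norm equivalence recalled above, this is the claim; the case $u=0$ is trivial.

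The only point that deserves a moment's attention, rather than an obstacle, is that the uniform Trudinger--Moser constant $C^*_{\mathcal K}$ need not be bounded by $1$: this is precisely why Lemma \ref{lem:expo_order} was isolated, as it lets one trade a slightly smaller exponent $\tilde\alpha<\alpha^*$ for an exponential integral bounded by $1$, which is what the Luxemburg norm requires. All the genuine analysis has already been done in establishing Proposition \ref{cor:Trud-Moser1}, so no further difficulty is expected.
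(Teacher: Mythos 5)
Your proof is correct and takes essentially the same route as the paper's: both deduce the inclusion from \eqref{eq:int-finito-K} and the norm bound from Proposition \ref{cor:Trud-Moser1} combined with Lemma \ref{lem:expo_order}, normalization in the $H^1(A)$-norm, and the equivalence of $\|\cdot\|_{H^1(A)}$ and $\|\cdot\|_{H^1_\lambda(A)}$ on $\mathcal K\subset H^1_0(A)$. The only (harmless) difference is your choice $C=\max\{1,C^*_{\mathcal K}\}$ in Lemma \ref{lem:expo_order}, where the paper takes $C=C^*_{\mathcal K}$ directly.
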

\begin{proof} We start observing that, in view of \eqref{eq:int-finito-K}, $\mathcal K\subset \mathcal M(A)$. 

Now, let $u\in\mathcal{K}$ with $\|u\|_{H^1(A)}\le 1$ and let $\alpha^*$ be as in Proposition \ref{cor:Trud-Moser1}.  By Lemma \ref{lem:expo_order} applied with $\alpha=\alpha^*$ and $C=C^*_\mathcal K$, there exists $\tilde \alpha\in (0,\alpha^*)$ such that
\[\int_A (e^{\tilde \alpha u^2} - 1)\, dx\le \frac{1}{C^*_\mathcal K}\int_A (e^{\alpha^* u^2} - 1)\, dx\le 1,\]
which implies that $\|u\|_{\mathcal L(A)}\le 1/\sqrt{\tilde \alpha}$ by the definition of the Luxemburg norm $\|\cdot\|_{\mathcal{L}(A)}$. In conclusion, applying the previous inequality to $u/\|u\|_{H^1(A)}$, we get for all $u\in H^1_0(A)\cap\mathcal K$, 
\[
\|u\|_{\mathcal L(A)}\le \frac{1}{\sqrt{\tilde\alpha}}\|u\|_{H^1(A)}.
\]
Since the norms $\|\cdot\|_{H^1(A)}$ and $\|\cdot\|_{H^1_\lambda(A)}$ are equivalent in $\mathcal K\subset H^1_0(A)$, the continuous embedding stated is proved.
\end{proof}

We recall that the compact embedding of $\mathcal K$ in the Lebesgue spaces $L^q(A)$ with $q>2$ for $R_1=\infty$ has been proved in \cite[Proposition 2.8]{BCNW2024}. Altogether, also in view of the above proposition, $\mathcal K$ verifies assumption (i) of Theorem \ref{thm:abstract}.

\subsection{Pointwise invariance property}
In this subsection, we will prove the validi\-ty of assumption (ii) of Theorem \ref{thm:abstract}. 
We first consider the auxiliary linear problem in the cone $\mathcal K$; arguing as in \cite[Lemma 2.9]{BCNW2024} 
it is possible to prove the following.  
\begin{lemma}\label{le:invarianceK}
Let $h\in \mathcal K\cap L^\infty(A)$. The linear problem 
\begin{equation}\label{Ph}
\begin{cases}
-\Delta v + \lambda v =h \quad&\mbox{in } A\\
v \in H^1_0(A)
\end{cases}
\end{equation}
has a unique solution $v$ and moreover $v\in \mathcal K$.
\end{lemma}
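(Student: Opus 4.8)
The plan is to follow the scheme of \cite[Lemma 2.9]{BCNW2024} in three moves: first produce the unique $H^1_0$-solution by a purely linear argument, then use uniqueness together with the maximum principle to show that $v=\mathfrak v(r,\theta)\ge 0$, and finally establish the angular monotonicity $\mathfrak v_\theta\le 0$ by differentiating the equation in the angular variable and applying a maximum principle. For existence and uniqueness I would simply invoke the Lax--Milgram theorem on $H^1_0(A)$: since $h\in\mathcal K\subset H^1_0(A)\subset L^2(A)$, the functional $\eta\mapsto\int_A h\,\eta\,dx$ is bounded, and the bilinear form associated with $-\Delta+\lambda$ is continuous and coercive on $H^1_0(A)$ (coercivity being immediate if $\lambda>0$ and following from Poincar\'e's inequality if $\lambda=0$ and $A$ is bounded). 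Because $h\in L^\infty(A)$, interior and boundary elliptic regularity then give $v\in H^2_{\mathrm{loc}}(\overline A)\cap C^{1,\alpha}_{\mathrm{loc}}(\overline A)$ for some $\alpha\in(0,1)$, with $v=0$ on $\partial A$.

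For the symmetry, I would note that $-\Delta+\lambda$ and the annulus $A$ are invariant under the rotations about the $x_N$-axis and under the reflection $x_N\mapsto -x_N$, and that the datum $h=\mathfrak h(r,\theta)$ is invariant under this group; hence $v\circ g$ solves \eqref{Ph} for every such isometry $g$, and uniqueness forces $v\circ g=v$, i.e. $v=\mathfrak v(r,\theta)$. Positivity follows by testing \eqref{Ph} with $v^-:=\max\{-v,0\}\in H^1_0(A)$, which gives $\|v^-\|_{H^1_\lambda(A)}^2=-\int_A h\,v^-\,dx\le 0$ since $h\ge 0$, hence $v^-\equiv 0$ and $v\ge 0$.

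The heart of the proof is the angular monotonicity. I would pass to the coordinates $(\xi,\zeta)$ of \eqref{eq:s-t}, in which $\tilde{\mathfrak v}$ solves the weighted equation $-\operatorname{div}(\xi^{N-2}\nabla\tilde{\mathfrak v})+\lambda\xi^{N-2}\tilde{\mathfrak v}=\xi^{N-2}\tilde{\mathfrak h}$ in $Q$ (equivalently $-\Delta\tilde{\mathfrak v}-\frac{N-2}{\xi}\tilde{\mathfrak v}_\xi+\lambda\tilde{\mathfrak v}=\tilde{\mathfrak h}$), with homogeneous Dirichlet condition on $\{r=R_0\}\cup\{r=R_1\}$ and natural conditions on $\{\xi=0\}$ and $\{\zeta=0\}$ coming from the axial symmetry and from the evenness in $x_N$. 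Applying the angular field $\partial_\theta=-\zeta\,\partial_\xi+\xi\,\partial_\zeta$ to this equation, and using that $\partial_\theta$ commutes with $\Delta$, that $\partial_\theta\xi=-\zeta$, and that $[\partial_\theta,\partial_\xi]=-\partial_\zeta$, a short computation shows that $w:=\mathfrak v_\theta$ satisfies, weakly in $Q$,
\[
-\Delta w-\frac{N-2}{\xi}\,w_\xi+\Bigl(\lambda+\frac{N-2}{\xi^2}\Bigr)w=\mathfrak h_\theta\le 0 ,
\]
an equation whose zeroth-order coefficient is bounded below by a positive constant (because $\lambda>0$, or $A$ is bounded and then $\xi$ is bounded). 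One checks that $w=0$ on $\{r=R_0\}\cup\{r=R_1\}$ (since $v$ vanishes identically along these arcs), $w=\xi\,\tilde{\mathfrak v}_\zeta=0$ on $\{\zeta=0\}$ (evenness in $x_N$), and $w=-\zeta\,\tilde{\mathfrak v}_\xi=0$ on $\{\xi=0\}$ (as $\tilde{\mathfrak v}_\xi$ vanishes on the symmetry axis). Testing the weak formulation with $w^+:=\max\{w,0\}$, admissible since $w\le 0$ on the Dirichlet part of $\partial Q$, then yields
\[
\int_Q\Bigl(\xi^{N-2}|\nabla w^+|^2+\bigl((N-2)\xi^{N-4}+\lambda\xi^{N-2}\bigr)(w^+)^2\Bigr)\,dx\le\int_Q\xi^{N-2}\,\mathfrak h_\theta\,w^+\,dx\le 0 ,
\]
so $w^+\equiv 0$, i.e. $\mathfrak v_\theta\le 0$ a.e.; together with the previous steps this gives $v\in\mathcal K$.

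The part I expect to demand the most care is the rigorous justification of this last step near the degeneracy set $\{\xi=0\}$ — which corresponds to the symmetry axis $\{x'=0\}$, an interior set of $A$ where the weight $\xi^{N-2}$ vanishes — namely the admissibility of differentiating the equation in $\theta$, the meaning of the natural boundary conditions, and the validity of the weighted maximum principle there; and, when $R_1=\infty$, the validity of the same argument on the unbounded region $Q$, where it is precisely the coercivity contributed by $\lambda>0$ that makes the test-function argument go through. These technical points are handled exactly as in \cite[Lemma 2.9]{BCNW2024}.
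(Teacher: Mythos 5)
Your proposal is correct and follows the same route as the paper, which itself offers no proof beyond the deferral to \cite[Lemma 2.9]{BCNW2024}: Lax--Milgram plus uniqueness for existence and symmetry, testing with $v^-$ for positivity, and the maximum principle applied to $w=\mathfrak v_\theta$ for the angular monotonicity. Your key computation checks out (indeed $\partial_\theta$ applied to $-\Delta-\tfrac{N-2}{\xi}\partial_\xi$ produces exactly the extra zeroth-order term $\tfrac{N-2}{\xi^2}w$, and the sign conditions $\mathfrak h_\theta\le 0$, $w=0$ on $\partial Q$ make the $w^+$ test-function argument close), and the technical points you flag — regularity near the axis $\{\xi=0\}$, the rigorous weak formulation for $w$, and integrability of $w\sim r|\nabla v|$ at infinity when $R_1=\infty$ — are precisely the ones the cited lemma is invoked to handle.
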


By combining Lemma \ref{le:invarianceK} with a truncation argument, we can conclude the proof of the pointwise invariance of $\mathcal K$. Here assumptions \ref{f6}--\ref{f10} come into play.

\begin{proposition}\label{lem:pointwise-invariance-app}
Let $u\in \mathcal K$ and $f$ satisfy \ref{f1}--\ref{f5}, with $\Omega=A$, and \ref{f6}--\ref{f10}. The problem 
\begin{equation}\label{P-u-1}
\begin{cases}
-\Delta v + \lambda v = f(x,u) \quad&\mbox{in } A\\
v=0& \mbox{on }\partial A
\end{cases}
\end{equation}
has a unique solution $v$ and moreover $v\in \mathcal K$.
\end{proposition}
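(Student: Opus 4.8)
The plan is to realize $v$ as a limit of solutions of truncated problems to which Lemma \ref{le:invarianceK} applies, and then to upgrade the regularity of $f(x,u)$ enough to place it in $\mathcal K \cap L^\infty$ on the truncated level. First, fix $u\in\mathcal K$. The right-hand side $f(x,u)$ need not be bounded, so for $M>0$ introduce the truncation $u_M:=\min\{u,M\}$ (equivalently $u_M = \mathfrak u_M(r,\theta)$ where $\mathfrak u_M$ is the pointwise minimum of $\mathfrak u$ with the constant $M$). The first step is to check that $u_M\in\mathcal K$: positivity is clear, $u_M\in H^1_0(A)$ by standard truncation, and the angular monotonicity $\partial_\theta \mathfrak u_M\le 0$ a.e.\ follows since truncating from above preserves monotonicity (on $\{\mathfrak u<M\}$ the derivative is $\mathfrak u_\theta\le0$ and on $\{\mathfrak u\ge M\}$ it is $0$). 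The second step is to show $h_M:=f(\cdot,u_M)\in \mathcal K\cap L^\infty(A)$. Boundedness is immediate from \ref{f1} since $0\le u_M\le M$. That $h_M\in\mathcal K$ is where \ref{f6}, \ref{f7}, \ref{f8} enter: by \ref{f6}, $f(x,u_M)=\mathfrak f(r,\theta,\mathfrak u_M(r,\theta))$ depends only on $(r,\theta)$; it is nonnegative by \ref{f1}; and its weak $\theta$-derivative is $\mathfrak f_\theta(r,\theta,\mathfrak u_M) + \partial_s\mathfrak f(r,\theta,\mathfrak u_M)\,\partial_\theta\mathfrak u_M\le 0$, because $\mathfrak f_\theta\le0$ by \ref{f7}, $\partial_s f\ge0$ by \ref{f8}, and $\partial_\theta\mathfrak u_M\le0$. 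The chain rule here is justified because $f(\cdot,s)\in W^{1,1}_{\mathrm{loc}}(A)$ by \ref{f7} and $\partial_s f$ is locally bounded by \ref{f10}, so $x\mapsto f(x,u_M(x))$ lies in $W^{1,1}_{\mathrm{loc}}(A)$ with the stated weak derivative (a standard composition lemma for Sobolev functions composed with Lipschitz-in-$s$, $W^{1,1}$-in-$x$ maps; the local bounds on $|\nabla_x f|$ and $|\partial_s f|$ from \ref{f10} are exactly what makes this rigorous).

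With $h_M\in\mathcal K\cap L^\infty(A)$ established, Lemma \ref{le:invarianceK} gives a unique $v_M\in\mathcal K$ solving $-\Delta v_M+\lambda v_M = f(\cdot,u_M)$ in $A$ with zero boundary data. The third step is to pass to the limit $M\to\infty$. Since $u\in\mathcal K\subset\mathcal M(A)$ by Proposition \ref{prop:cont-embedding}, and $f(\cdot,u_M)\to f(\cdot,u)$ a.e.\ with $|f(x,u_M)|\le C+ e^{\alpha u^2}-1$ for a suitable $\alpha$ (using \eqref{eq:asterisco_mu_nu} and monotonicity of the dominating function in $|s|$, so the bound holds with $u$ in place of $u_M$), the dominated convergence argument of Lemma \ref{lem:convergenceOrlicz}/Lemma \ref{lem:convergences} shows $f(\cdot,u_M)\to f(\cdot,u)$ in $L^2(A)$ — more precisely, in the unbounded case one should check $f(\cdot,u)\in L^q(A)$ for the relevant $q$ using \ref{f5}-(u) and the embedding $\mathcal M(A)\hookrightarrow L^{q\mu}(A)$, exactly as in the proof of Lemma \ref{lem:convergences}; in the bounded case $L^2$ suffices directly. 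Testing the equations for $v_M$ and $v_{M'}$ against $v_M-v_{M'}$ and using the $H^1_\lambda$-coercivity gives $\|v_M-v_{M'}\|_{H^1_\lambda(A)}\le C\|f(\cdot,u_M)-f(\cdot,u_{M'})\|_{(H^1_\lambda)^*}\le C'\|f(\cdot,u_M)-f(\cdot,u_{M'})\|_{L^{2}(A)}$ (or the appropriate $L^{q}$ dual exponent, via the embedding $\mathcal M(A)\hookrightarrow L^{q'}(A)$), so $(v_M)$ is Cauchy in $H^1_0(A)$; let $v$ be its limit. Then $v$ solves \eqref{P-u-1} by passing to the limit in the weak formulation, and $v\in\mathcal K$ because $\mathcal K$ is closed in $H^1(A)$ by Lemma \ref{le:Kcone} and $v_M\in\mathcal K$ for every $M$. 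Uniqueness of $v$ is immediate from the coercivity of $-\Delta+\lambda$ on $H^1_0(A)$.

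The main obstacle is the chain-rule/composition step: showing that $x\mapsto f(x,u_M(x))$ is a legitimate $W^{1,1}_{\mathrm{loc}}$ function with angular derivative of the expected sign, given only that $f(\cdot,s)\in W^{1,1}_{\mathrm{loc}}$ in $x$ and $C^1$ in $s$ with the uniform local bounds \ref{f10}. One has to approximate $u_M$ by smooth functions, apply the classical chain rule, and pass to the limit using the $L^2$-bound on $\nabla_x f$ from \ref{f10} (which is why $d_M\in L^2$ and not merely $L^1$ is imposed) together with the boundedness of $\partial_s f$; the sign of $\mathfrak f_\theta$ and the product structure of the derivative then survive the limit. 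Everything else — the truncation belonging to $\mathcal K$, the Orlicz-space domination for the $L^2$-convergence of the right-hand sides, and the energy estimate giving a Cauchy sequence — is routine given the lemmas already in place.
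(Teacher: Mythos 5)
Your proposal is correct and follows essentially the same route as the paper: truncate $u$ to $u_M=\min\{u,M\}\in\mathcal K\cap L^\infty(A)$, verify via \ref{f1}, \ref{f6}--\ref{f8} and the chain rule with \ref{f10} that $f(\cdot,u_M)\in\mathcal K\cap L^\infty(A)$, apply Lemma \ref{le:invarianceK}, and pass to the limit. The only (harmless) deviation is in the limit passage, where you obtain strong $H^1$-convergence of $(v_M)$ via dominated convergence and a Cauchy estimate, whereas the paper uses the monotonicity from \ref{f8} to get a uniform bound by monotone convergence and then passes to the limit weakly; both are valid given the Orlicz-space integrability \eqref{eq:int-finito-K}.
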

\begin{proof}
We first prove the result for $u\in \mathcal{K}\cap L^\infty(A)$. Since $u\in \mathcal K$ and 
$f$ satisfies  \ref{f1}, \ref{f6}--\ref{f8}, we have that
$h:=f(x,u)$ has the sign, symmetry, and monotonicity assumptions required in the definition \eqref{eq:K-def} of $\mathcal K$. We want to show that $h\in H^1_0(A)\cap L^\infty(A)$. By \ref{f1}, since $u$ is bounded, $h\in L^\infty(A)$. Moreover, using \eqref{eq:asterisco_mu_nu} with $\nu=0$ and $\tilde\mu=1$, and in view of \eqref{eq:favourite_ineq}, we have for every $\alpha>0$
\[
f(x,u)^2\le \begin{cases}C[1+(e^{2\alpha u^2}-1)]\quad & \mbox{if $\Omega$ is bounded}\\
C[u^2+(e^{2\alpha u^2}-1)] & \mbox{if $\Omega$ is unbounded}
\end{cases}
\]
for a suitable $C>0$. Thus in both cases, since 
$u\in \mathcal K\subset H^1_0(A)$ and using \eqref{eq:int-finito-K}, we have $h\in L^2(A)$.
Furthermore, by \ref{f1} and the fact that $u\in H^1_0(A)$, $h|_{\partial A}=f(\cdot,0)=0$. 
It remains to prove that $|\nabla h|\in L^2(A)$. For this, we use the assumption \ref{f10}, and the fact that $u\in H^1_0(A)\cap L^\infty(A)$ to get 
\[
\begin{split}
\int_A |\nabla h|^2\,dx \le 2\int_A |\nabla_x f(x,u)|^2 \, dx+2\int_A |\partial_s f(x,u)|^2|\nabla u|^2 \, dx\\
\le 2\|d_M\|_{L^2(A)}^2 + 2D_M^2\|\nabla u\|_{L^2(A)}^2 <\infty,
\end{split}
\]
with $M=\|u\|_{L^\infty(A)}$ and $d_M$, $D_M$ as in \ref{f10}. Then, applying Lemma \ref{le:invarianceK} with $h=f(x,u)\in \mathcal K\cap L^\infty(A)$, the thesis is proved in this case. 

Now, for a general $u\in \mathcal{K}$, we will use a truncation argument as in \cite[Proposition 4.2]{CowanMoameni2022}. Let $u\in \mathcal K$ and define $u_n:=\min\{u,n\}$ for every $n\in\mathbb N$. Then, $(u_n)\subset \mathcal K\cap L^\infty(A)$. By the first part of the proof, for every $u_n$ there exists a unique $v_n\in\mathcal K$ solving 
\[
\begin{cases}
-\Delta v + \lambda v = f(x,u_n)\quad&\mbox{in } A\\
v=0&\mbox{on } \partial A.
\end{cases}
\]
Hence, using the equation solved by $v_n$ and H\"older's inequality, we obtain
\begin{equation}\label{eq:est-vn}
\begin{aligned}
\|v_n\|_{H^1_\lambda(A)}^2&= \int_A f(x,u_n)v_n\,dx\le \|f(x,u_n)\|_{L^2(A)}\|v_n\|_{L^2(A)}\\
&\le C\|f(x,u_n)\|_{L^2(A)}\|v_n\|_{H^1_\lambda(A)}.
\end{aligned}
\end{equation}
Now, since $u_n\to u$ a.e. in $A$, $0\le u_n\le u_{n+1}$ for every $n$, $f(x,u_n)\ge 0$ by \ref{f1}, and $f(x,\cdot)$ is nondecreasing by \ref{f8},  
\[
\lim_{n\to\infty}\int_A f(x,u_n)^2\,dx = \int_A f(x,u)^2\,dx
\]
by monotone convergence.
Therefore, $(f(x,u_n))$ is bounded in $L^2(A)$, and so \eqref{eq:est-vn} implies that $(\|v_n\|_{H^1_\lambda(A)})$ is bounded. Thus, there exists $v\in H^1_0(A)$ such that up to a subsequence $v_n\rightharpoonup v$ weakly in $H^1(A)$ and so for every $\varphi\in C^\infty_0(A)$, 
\[
\lim_{n\to\infty}\int_A(\nabla v_n\cdot\nabla \varphi+\lambda v_n\varphi)\,dx = \int_A(\nabla v\cdot\nabla \varphi+\lambda v\varphi)\,dx. 
\]
Moreover, for every $\varphi\in C^\infty_0(A)$ it also holds, by monotone convergence, 
\[
\begin{aligned}
\lim_{n\to\infty}&\left|\int_A \left(f(x,u_n)-f(x,u)\right) \varphi\,dx \right|\\  
& \le \lim_{n\to\infty}\|\varphi\|_{L^\infty(A)} \int_{A}\left(f(x,u) - f(x,u_n)\right)\,dx = 0. 
\end{aligned}
\]
Whence, being for every $n\in\mathbb N$, 
\[
\int_A(\nabla v_n\cdot\nabla \varphi+\lambda v_n\varphi)\,dx = \int_A f(x,u_n)\varphi\,dx\;\mbox{ for every }\varphi\in C^\infty_0(A),
\]
we get 
\[
\int_A(\nabla v\cdot\nabla \varphi+\lambda v\varphi)\,dx = \int_A f(x,u)\varphi\,dx\;\mbox{ for every }\varphi\in C^\infty_0(A)
\]
that is $v\in H^1_0(A)$ solves \eqref{P-u-1}. Finally, since $\mathcal K$ is weakly closed by Lemma \ref{le:Kcone}, $v\in \mathcal K$ and the proof is concluded.
\end{proof}

\subsection{End of the proof of Theorem \ref{thm:applied1}}
The thesis follows from an application of the abstract Theorem \ref{thm:abstract}, with the domain $\Omega=A$ as in \eqref{eq:A-def} and the convex cone $K=\mathcal K$ as in \eqref{eq:K-def}. Notice that $\mathcal K$ is closed by Lemma \ref{le:Kcone}. 
Moreover, by Section \ref{subsec:verification(i)}, the embeddings required in (i) of Theorem \ref{thm:abstract} hold, and by Proposition \ref{lem:pointwise-invariance-app}, the pointwise invariance property (ii) is satisfied. \hfill $\qed$

\begin{remark} \label{rem:prototype}
The prototype exponential nonlinearity defined in \eqref{eq:prototype}, with weight $w$ as in \eqref{eq:hp-w}, satisfies \ref{f1}-\ref{f10}. Indeed, since
\begin{equation} \label{eq:modelexp2}
f(x,s) \sim \frac{w(x)}{m!}|s|^{\beta(m+1)-2}s \quad \text{as } s \to 0,
\end{equation}
$f(x,s) \sim w(x)|s|^{\beta-2}s e^{|s|^\beta}$ as $|s| \to \infty$ and \eqref{eq:hp-w} holds, it is not difficult to check that \ref{f1} and \ref{f2} are satisfied.
Moreover, we have that 
$$F(x,s)=\frac{w(x)} \beta \exp_{m+1} \left( |s|^\beta\right) \quad\mbox{ for all $(x,s)\in A\times\mathbb R$} \: .$$
Concerning \ref{f3}, using that $w>0$ in $A$, we can estimate
\[
s f(x,s) =  w(x) \sum_{i=m+1}^\infty \frac{|s|^{i\beta}i}{i!} \geq w(x) (m+1) \sum_{i=m+1}^\infty \frac{|s|^{i\beta}}{i!} = \beta(m+1) F(x,s)
\]
for all $(x,s)\in A\times \mathbb R$, hence \ref{f3}-(b) is verified with $\sigma= \beta (m+1) > 2$ and any $\overline{M}>0$. In the unbounded case, the first part of \ref{f3}-(u) is verified with the same $\sigma$ as before, and the second part of the assumption is satisfied with $\sigma=\beta(m+1)>2$, $c_1=w/(\beta(m+1)!)\in L^\infty(A)$, and $c_2\equiv0$ because 
\[\begin{aligned}
F(x,s)& = \frac{w(x)}{\beta}\left(\frac{|s|^{\beta(m+1)}}{(m+1)!}+\sum_{i=m+2}^\infty\frac{|s|^{i\beta}}{i!}\right)\ge \frac{w(x)}{\beta(m+1)!}|s|^{\beta(m+1)} 
\end{aligned}
\]
for every $(x,s)\in A\times\mathbb R$.
Furthermore, since $\beta(m+1)>2$,
$$\frac{F(x,s)}{s^2}\sim \frac{w(x)}{\beta(m+1)!}|s|^{\beta(m+1) -2}\to 0 \quad \text{ as } s\to 0 \:,$$
hence
\ref{f5}-(b) holds true if $A$ is bounded. Similarly in view of \eqref{eq:modelexp2}, we have
$$\frac{|f(x,s)|}{|s|^\mu} \sim \frac{w(x)}{m!}|s|^{\beta(m+1)-1 - \mu} \quad \text{as } s \to 0 $$
and \ref{f5}-(u) holds with any $\mu\in (1,\beta(m+1)-1]$, whose existence is ensured by the assumption $\beta(m+1)>2$.
The hypotheses \ref{f6},\ref{f7}, and \ref{f10} are satisfied thanks to \eqref{eq:hp-w} and the fact that $\partial_sf(x,\cdot)\in C([0,\infty))$ for all $x\in A$. Indeed,  for all $x \in A$ and $s \geq 0$, we have
\begin{equation} \label{eq:sf_sbeta<1}
\partial_s f(x,s)=  w(x) \left[ (\beta-1) s^{\beta-2} \exp_m \left( s^\beta \right) + \beta s^{2 \beta-2} \exp_{m-1} \left(s^ \beta \right)\right] \: .
\end{equation}
Finally to check the validity of \ref{f8}, using the series expansion of the exponential function $\exp_{m-1}$, we can estimate
\begin{equation} \label{eq:estimate ofsf_sbeta<1}
\partial_s f(x,s) \geq  w(x)[ \beta(m+1) - 1]s^{\beta-2} \exp_m \left( s^\beta \right) \geq 0  \quad\mbox{ for all $(x,s)\in A\times\mathbb R^+$} \:.
\end{equation}

In the same way, for the inhomogeneous power nonlinearity defined in \eqref{eq:prototype-power}, with weight $w$ satisfying \eqref{eq:hp-w}, 
it is easy to check that all the hypotheses \ref{f1}-\ref{f10} are verified.
In particular, it is enough to recall that $F(x,s)=w(x)(\:|s|^p/p + |s|^{\mathfrak{p}}/{\mathfrak{p}} \: )$ for all $(x,s)\in A\times\mathbb R$, to see that 
\ref{f3} is satisfied with $\sigma=p>2$, $c_1=w/p\in L^\infty(A)$, and $c_2\equiv0$. Moreover, \ref{f5}-(b) is fulfilled since $p>2$ and \ref{f5}-(u) also holds with any $\mu\in (1,p-1]$.
\end{remark}

\section{Symmetry breaking in the radial case}
\label{sec:case-radial}
In this section, we consider problem \eqref{eq:main-appl} with radial nonlinearity $f=f(|x|,s)=\mathfrak{f}(r,s)$, that is under the stronger assumption \ref{f'6} instead of \ref{f6}. In this setting, we prove Theorem \ref{thm_nonradial},  showing that under the additional assumptions \ref{f11}--\ref{f12}, the solution found in Theorem \ref{thm:applied1} is nonradial. 

We start with two technical lemmas about the functional $J$ introduced in \eqref{eq:J-def}, that is
\begin{equation*}
J(u):=\frac12 \|u\|_{H^1_\lambda(A)}^2-\int_A F(x,u) \, dx.
\end{equation*}

\begin{lemma}\label{lem:g}
Under the assumptions of Theorem \ref{thm_nonradial}, for any $\phi\in \mathcal K\setminus\{0\}$, the function $g:t\in[0,\infty)\mapsto J(t\phi)$ satisfies $g(t)<0$ for $t$ sufficiently large. Moreover, $g$ is differentiable, has a unique maximum point $t_\phi\in (0,+ \infty)$, and
\begin{equation}\label{eq:g'-sign}
g'(t)\begin{cases}
>0\quad&\mbox{if }0<t<t_\phi\\
=0\quad&\mbox{if }t=t_\phi\\
<0\quad&\mbox{if }t>t_\phi.
\end{cases}
\end{equation}
\end{lemma}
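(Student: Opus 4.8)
The plan is to study the real function $g(t)=J(t\phi)$, $t\ge0$, through its derivative. By Proposition~\ref{prop:J-C1} (note that $t\phi\in\mathcal K\subset V$ for every $t\ge0$, since $\mathcal K$ is a cone contained in $\mathcal M(A)$ by Proposition~\ref{prop:cont-embedding}) and the chain rule, $g\in C^1([0,\infty))$ with
\[
g'(t)=t\,\|\phi\|_{H^1_\lambda(A)}^2-\int_A f(x,t\phi)\,\phi\,dx .
\]
The first claim follows at once from \eqref{f4new}: since $\phi\in L^\sigma(A)$ (by Proposition~\ref{prop:cont-embedding} and Lemma~\ref{lem:L-embedded}, as $\sigma>2$), $c_1>0$ and $c_2\in L^1(A)$, one has $g(t)\le\tfrac{t^2}{2}\|\phi\|_{H^1_\lambda(A)}^2-c\,t^\sigma+C$ with $c=\int_A c_1|\phi|^\sigma\,dx>0$; since $\sigma>2$, this gives $g(t)\to-\infty$, so $g(t)<0$ for $t$ large.

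The heart of the matter is the monotonicity of $g'(t)/t$. For $t>0$ write $g'(t)=t\,\psi(t)$, where
\[
\psi(t):=\|\phi\|_{H^1_\lambda(A)}^2-\int_{\{\phi>0\}}\frac{f(x,t\phi)}{t\phi}\,\phi^2\,dx ;
\]
the integral is finite for each $t>0$ by \eqref{eq:asterisco_mu_nu}, \eqref{eq:int-finito-K} and the embeddings of $\mathcal K$ into the Lebesgue spaces $L^q(A)$. I would then invoke \ref{f12}: since $\delta\ge\frac{2N}{H}+1>1$ and $f(x,s)>0$ for $s>0$ by \ref{f1}, for each $x$ the function $s\mapsto f(x,s)/s$ has derivative
\[
\frac{\partial_s f(x,s)\,s-f(x,s)}{s^2}\ \ge\ \frac{(\delta-1)\,f(x,s)}{s^2}\ >\ 0 \qquad\text{on }(0,\infty),
\]
hence is strictly increasing there. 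As $\phi\ge0$ and $\phi\not\equiv0$, the set $\{\phi>0\}$ has positive measure, so for $0<t_1<t_2$ the integrand in $\psi$ is strictly larger at $t_2$ than at $t_1$ on $\{\phi>0\}$; therefore $\psi$ is strictly decreasing on $(0,\infty)$.

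To localise the maximum I would show $g(t)>0$ for small $t>0$ using \eqref{eq:conseq-hyp-on-f}. In the bounded case, together with $\|\phi\|_{H^1_\lambda(A)}^2\ge(\lambda_1+\lambda)\|\phi\|_{L^2(A)}^2$ (variational characterisation of $\lambda_1$) and choosing $\nu>1$, this gives $g(t)\ge\tfrac{\varepsilon}{2}t^2\|\phi\|_{H^1_\lambda(A)}^2-C\,t^{\nu+1}$ for $0<t\le1$, the exponential term being controlled by $\int_A|\phi|^{\nu+1}(e^{\alpha\phi^2}-1)\,dx<\infty$ (H\"older and \eqref{eq:int-finito-K}); in the unbounded case one gets $g(t)\ge\tfrac12 t^2\|\phi\|_{H^1_\lambda(A)}^2-C(t^{\tilde\mu+1}+t^{\nu+1})$ with $\tilde\mu+1,\nu+1>2$. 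In both cases $g(t)>0$ for $t>0$ small. Finally, $g$ is continuous on $[0,\infty)$ with $g(0)=0$, it is positive near $0$, and $g(t)\to-\infty$; hence $g$ attains its global maximum at some $t_\phi>0$, where $g'(t_\phi)=0$, i.e. $\psi(t_\phi)=0$. Since $\psi$ is strictly decreasing, $t_\phi$ is its unique zero, with $\psi>0$ on $(0,t_\phi)$ and $\psi<0$ on $(t_\phi,\infty)$; multiplying by $t>0$ yields exactly \eqref{eq:g'-sign}, and in particular $t_\phi$ is the only critical point of $g$ in $(0,\infty)$, hence the unique maximum point.

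I expect the main obstacle to be not the monotonicity of $\psi$ (a clean consequence of \ref{f12}) but the integrability and continuity bookkeeping: checking that the integral defining $\psi(t)$ is finite for every $t>0$ and that the estimates near $t=0$ go through despite the exponential growth of $f$, which is exactly where \eqref{eq:asterisco_mu_nu}, \eqref{eq:int-finito-K} and the continuous embeddings of $\mathcal K$ must be used with care.
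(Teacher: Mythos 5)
Your proposal is correct and follows essentially the same route as the paper: differentiability via Proposition \ref{prop:J-C1}, the mountain pass geometry of $J$ on the cone for the behaviour of $g$ near $0$ and at infinity, and the strict monotonicity of $s\mapsto f(x,s)/s$ (equivalently of $t\mapsto f(x,ts)/t$), deduced from \ref{f1} and \ref{f12}, as the key to uniqueness of the critical point and to \eqref{eq:g'-sign}. The only difference is organisational: you read off the sign of $g'(t)/t$ directly from the monotonicity, while the paper first gets uniqueness of the critical point by contradiction and then deduces the sign — the content is identical.
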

\begin{proof}
Let $\phi$ be as in the statement. By the definition of $g$ and the mountain pass geometry conditions satisfied by $J$ (see the proof of Theorem \ref{thm:abstract} in Section \ref{sec:abstract_theorem}), $g(0)=0$, $g(t)>0$ for $t>0$ sufficiently small, and $g(t)<0$ for $t$ sufficiently large. 
We notice that $g$ is differentiable by Proposition \ref{prop:J-C1}.  Hence
there exists $t_\phi>0$ such that $\max_{t\ge 0}g(t)=g(t_\phi)$ and $g'(t_\phi)=0$.  
For every $t\ge 0$ we can compute 
\begin{equation}
\label{eq:g'-def}
g'(t)=J'(t\phi)[\phi]=t\int_A\left(|\nabla \phi|^2+\lambda \phi^2-\frac{f(x,t\phi)}{t}\phi\right)\, dx,
\end{equation}
hence 
\[
g'(t)=0 \,\mbox{ with }t>0\quad	\Leftrightarrow\quad \|\phi\|_{H^1_\lambda(A)}^2=\int_A \frac{f(x,t\phi)}{t}\phi\, dx.
\]
Thus, $\|\phi\|_{H^1_\lambda(A)}^2=\int_A \frac{f(x,t_\phi \phi)}{t_\phi}\phi\,dx$.
Now, suppose by contradiction that there is another nonzero critical point of $g$, namely there exists $0<\bar t\neq t_\phi$ such that $g'(\bar t)=0$. Then, 
\begin{equation}\label{eq:uguaglianzaintegrali}
\int_A \frac{f(x,t_\phi \phi)}{t_\phi}\phi\, dx=\|\phi\|_{H^1_\lambda(A)}^2=\int_A \frac{f(x,\bar t \phi)}{\bar t}\phi\,dx.
\end{equation}
On the other hand, by \ref{f1} and \ref{f12} for every $x\in A$ and $s>0$ 
\begin{equation}\label{eq:exf13}
\text{the map }\quad t\in (0,\infty)\mapsto \frac{f(x,ts)}{t}\quad \text{ is strictly increasing}.
\end{equation}
This contradicts \eqref{eq:uguaglianzaintegrali}, thus providing the uniqueness of the maximum point $t_\phi$. Finally, \eqref{eq:g'-sign} follows from \eqref{eq:exf13} and \eqref{eq:g'-def}, which completes the proof.  
\end{proof}

\begin{lemma}\label{lem:C2}
Under the assumptions of Theorem \ref{thm_nonradial},
the functional $J$ is of class $C^2(V;\mathbb R)$.
\end{lemma}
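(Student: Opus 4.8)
The plan is to split $J=\frac12\|\cdot\|_{H^1_\lambda(A)}^2-\Phi$, where $\Phi(u)=\int_A F(x,u)\,dx$ as in \eqref{eq:Phi-def} with $\Omega=A$, and to prove that $\Phi\in C^2(V;\mathbb R)$; since $u\mapsto\frac12\|u\|_{H^1_\lambda(A)}^2$ is a continuous quadratic form on $V$ (because $\|\cdot\|_{H^1_\lambda(A)}\le\|\cdot\|_V$), it is of class $C^\infty$, so this suffices. By Proposition \ref{prop:J-C1} we already know $\Phi\in C^1(V;\mathbb R)$ with $\Phi'(u)[v]=\int_A f(x,u)v\,dx$. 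The natural candidate for the second differential at $u\in V$ is the symmetric bilinear form
\[
B_u[v,w]:=\int_A\partial_s f(x,u)\,v\,w\,dx,\qquad v,w\in V,
\]
equivalently the operator $u\mapsto\Phi''(u)\in\mathcal L(V;V^*)$ with $\langle\Phi''(u)v,w\rangle=B_u[v,w]$. Here one uses that $\partial_s f(x,\cdot)$ is even (as $f(x,\cdot)$ is odd and, by \ref{f8} and \ref{f11}, of class $C^1(\mathbb R)$) and, by \ref{f11}, obeys a growth bound of exactly the same type as the one satisfied by $f$ in \eqref{eq:asterisco_mu_nu}.

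First I would verify that $B_u$ is well defined and bounded, namely $|B_u[v,w]|\le C(\|u\|_V)\|v\|_V\|w\|_V$ for all $v,w\in V$: this follows by inserting \ref{f11}, using $(e^s-1)^t\le e^{ts}-1$ from \eqref{eq:favourite_ineq}, H\"older's inequality with three exponents, the embedding $\mathcal L(A)\hookrightarrow L^q(A)$ of Lemma \ref{lem:L-embedded}, and the fact that $u\in\mathcal M(A)$ guarantees $\int_A(e^{2\alpha u^2}-1)\,dx<\infty$ for every $\alpha>0$ and $u\in L^q(A)$ for every $q\ge 2$ (in the unbounded case the power term $|u|^{\mu-1}$, with $\mu>1$ as in \ref{f5}, is absorbed by a suitable choice of the H\"older exponents; in the bounded case it is replaced by a constant, which is integrable). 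Next I would show that $u\mapsto B_u$ is the G\^ateaux derivative of $\Phi'$: fixing $u,v,w\in V$ and $|t|\le 1$, the mean value theorem gives $\frac1t\big(f(x,u+tv)-f(x,u)\big)=\partial_s f(x,u+\tau_t(x)v)\,v$ with $\tau_t(x)\in[0,t]$, which tends to $\partial_s f(x,u)v$ a.e. as $t\to0$; moreover $|\partial_s f(x,u+\tau_t(x)v)\,v\,w|$ is dominated, uniformly in $t\in[-1,1]$, by a fixed function in $L^1(A)$, thanks to \ref{f11}, $(u+\tau_t v)^2\le 2u^2+2v^2$, $e^{a+b}-1\le\frac12(e^{2a}-1)+\frac12(e^{2b}-1)$, H\"older's inequality, Lemma \ref{lem:L-embedded}, and $u,v,w\in\mathcal M(A)$. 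Dominated convergence then yields $\frac1t(\Phi'(u+tv)-\Phi'(u))[w]\to B_u[v,w]$, so $\Phi'$ is G\^ateaux differentiable with derivative $u\mapsto B_u$.

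The main obstacle is the final step, the continuity of $u\mapsto B_u$ from $V$ into $\mathcal L(V;V^*)$, which upgrades the G\^ateaux differentiability of $\Phi'$ to $\Phi'\in C^1(V;V^*)$, i.e. $\Phi\in C^2$. If $u_n\to u$ in $V$, then by H\"older's inequality and Lemma \ref{lem:L-embedded}
\[
\sup_{\|v\|_V\le1,\ \|w\|_V\le1}\big|B_{u_n}[v,w]-B_u[v,w]\big|\le C\,\|\partial_s f(\cdot,u_n)-\partial_s f(\cdot,u)\|_{L^q(A)}
\]
for a suitable exponent $q$ (as in the stronger thesis of Lemma \ref{lem:convergenceOrlicz}), so everything reduces to proving $\partial_s f(\cdot,u_n)\to\partial_s f(\cdot,u)$ in $L^q(A)$. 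This is precisely the analogue of Lemma \ref{lem:convergenceOrlicz} with $f$ replaced by $\partial_s f$: since $u_n\to u$ in $\mathcal L(A)$, hence a.e. along a subsequence and in every $L^q(A)$ with $q\ge2$, one has $\partial_s f(x,u_n)\to\partial_s f(x,u)$ a.e., and the growth bound \ref{f11} provides an $L^1(A)$ dominating sequence with convergent integrals (using the elementary inequality recalled before \eqref{eq:asterisco_mu_nu}, \eqref{eq:favourite_ineq}, and \eqref{eq:conv-mod} of Lemma \ref{lem:auxiliary}); the General Dominated Convergence Theorem then gives the convergence, exactly as in the proof of Lemma \ref{lem:convergenceOrlicz}. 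Altogether $\Phi\in C^2(V;\mathbb R)$ with $\Phi''(u)[v,w]=B_u[v,w]$, and therefore $J\in C^2(V;\mathbb R)$ with $J''(u)[v,w]=\int_A(\nabla v\cdot\nabla w+\lambda vw)\,dx-\int_A\partial_s f(x,u)vw\,dx$.
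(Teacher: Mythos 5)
Your proposal is correct and follows essentially the same route as the paper: the same splitting of $J$, the same candidate $\Phi''(u)[v,w]=\int_A\partial_s f(x,u)vw\,dx$ established via the Mean Value Theorem, \ref{f11} and dominated convergence, and the same reduction of the continuity of $\Phi''$ to the $L^q(A)$-convergence of $\partial_s f(\cdot,u_n)$ through H\"older's inequality, Lemma \ref{lem:L-embedded} and the General Dominated Convergence Theorem. The only cosmetic difference is that you phrase the continuity for the full bilinear form in $\mathcal L(V;V^*)$ while the paper tests on the diagonal $v=w$, and the admissible exponent $q$ in the unbounded case must satisfy $q(\mu-1)\ge 2$ rather than the condition of Lemma \ref{lem:convergenceOrlicz}, exactly as in the paper's choice \eqref{eq:choice-of-q}.
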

\begin{proof}
The first term, $\|\cdot\|_{H^1_\lambda(A)}^2$, is clearly of class $C^2$. 
By Proposition \ref{prop:J-C1} we already know that $\Phi(\cdot)=\int_A F(x,\cdot) \, dx$ is of class $C^1$,  with $\Phi'(u)[v]=\int_A f(x,u)v \, dx$ for every $u,\,v\in V$.
It remains to prove that $\Phi$ is of class $C^2$. 
We first show that the second G\^ateaux derivative exists, that is
\begin{equation}\label{eq:existsGateaux2}
\begin{aligned}
\lim_{t\to 0}\frac{1}{t}&\left(\Phi'(u+tw)[v]-\Phi'(u)[v]-\Phi''(u)[v,tw]\right)\\
&=\lim_{t\to 0}\frac{1}{t}\int_A[f(x,u+tw)v-f(x,u)v-t\partial_sf(x,u)vw]\, dx =0,
\end{aligned}
\end{equation}
where $\Phi''(u)[v,w]=\int_A \partial_sf(x,u)v w\, dx$ for all $u,\,v,\,w\in V$.
Indeed, we have that
$\frac{1}{t}[f(x,u+tw)-f(x,u)-t\partial_sf(x,u)w]\to 0$ a.e. in $A$,  by the regularity of $f$ required in \ref{f11}.
Moreover, by the Mean Value Theorem and \ref{f11} (where we treat simultaneously the two cases (u) and (b), taking formally $\mu=1$ when $R_1<\infty$), we get for a.e. $x\in A$ and for some $\alpha>0$,
\begin{equation}\label{eq:g-def}
\begin{aligned}
\frac{1}{|t|}&\big|f(x,u+tw)-f(x,u)-t\partial_sf(x,u)w\big||v|\\
&= \big|\partial_sf\big(x,u+t(x)w\big)-\partial_sf(x,u)\big||w||v|\\
&\le 
\big[C|u+t(x)w|^{\mu-1}+(e^{\alpha\left(u+t(x)w\right)^2}-1)+C|u|^{\mu-1}+(e^{\alpha u^2}-1)\big]|w||v|\\
&\le C\left[|u|^{\mu-1}+|w|^{\mu-1}+\frac{e^{4\alpha u^2}-1}2
+\frac{e^{4\alpha w^2}-1}2+(e^{\alpha u^2}-1)\right]|w||v|=:g
\end{aligned}
\end{equation}
for some $t(x)\in (0,1)$.
Now, by H\"older's inequality with exponents 2, 4, and 4, taking into account \eqref{eq:favourite_ineq}, we have  
\[
\int_A (e^{\alpha u^2}-1)|w||v|\, dx\le \left(\int_A (e^{2\alpha u^2}-1)\, dx\right)^{\frac{1}{2}}\|v\|_{L^4(A)}\|w\|_{L^4(A)}<\infty,
\]
where we have used the embedding $\mathcal M(A)\hookrightarrow L^4(A)$ given in Lemma \ref{lem:L-embedded}. Similarly, $\frac{e^{4\alpha u^2}-1}2|w||v|$, $\frac{e^{4\alpha w^2}-1}{2}|w||v|\in L^1(A)$.
If $R_1<\infty$, this is enough to conclude that $g\in L^1(A)$. Otherwise, if $A$ is unbounded, since $\mu>1$, by H\"older's inequality with exponents $\mu+1$, $\mu+1$, and $(\mu+1)/(\mu-1)$, and by Lemma \ref{lem:L-embedded} we get 
\[
\int_A |u|^{\mu-1}|v||w|\, dx\le \|u\|_{L^{\mu+1}(A)}^{\mu-1}\|v\|_{L^{\mu+1}(A)}\|w\|_{L^{\mu+1}(A)}<\infty.
\]
Analogously, $|w|^{\mu-1}|w||v|\in L^1(A)$. 
Hence, also in this case, the function $g$ introduced in \eqref{eq:g-def} belongs to $L^1(A)$. Therefore,
by the Dominated Convergence Theorem, \eqref{eq:existsGateaux2} follows. 

As for the continuity of $\Phi''$, we need to prove that if $(u_n)\subset V$ is such that $u_n\to u$ in $V$, then 
\begin{equation}
\label{eq:continuity-der2}
\lim_{n\to \infty}\sup_{\substack{v\in V\\ \|v\|_V\le 1}}\left|\int_A \partial_s f(x,u_n)v^2 \, dx - \int_A \partial_s f(x,u)v^2 \, dx\right| = 0.
\end{equation}
Applying H\"older's inequality with generic conjugate exponents $q$ and $q'$ to be determined later, we can estimate 
\[
\left|\int_A \left(\partial_s f(x,u_n) -  \partial_s f(x,u)\right) v^2 \, dx\right| \le \|\partial_s f(\cdot,u_n) -  \partial_s f(\cdot,u)\|_{L^q(A)}\|v\|_{L^{2q'}(A)}^2.
\]
Clearly, $\partial_s f(\cdot,u_n) \to \partial_s f(\cdot,u)$ a.e. in $A$, moreover by \ref{f11} (where again we treat simultaneously the two cases (u) and (b), taking formally $\mu=1$ when $R_1<\infty$) we have, for a.e. $x\in A$ and for some $\alpha>0$,
\[
\begin{aligned}
|\partial_s f(x,u_n) -  \partial_s f(x,u)|^q &\le C\left(|\partial_s f(x,u_n)|^q+|\partial_s f(x,u)|^q\right)\\
&\le C\left[|u_n|^{q(\mu-1)}+(e^{q\alpha u_n^2}-1)+|u|^{q(\mu-1)}+(e^{q\alpha u^2}-1)\right]\\
&=:g_n,
\end{aligned}
\]
for a suitable $C>0$. Letting now $g:=2C\left[|u|^{q(\mu-1)}+(e^{q\alpha u^2}-1)\right]$ and arguing as above, we get that $g_n\to g$ a.e. in $A$ and $g_n,\,g\in L^1(A)$, provided
\begin{equation}
\label{eq:choice-of-q}
q\ge 1\mbox{ if $R_1<\infty$ \quad and}\quad q>\max\left\{1,\frac{2}{\mu-1}\right\}\mbox{ if $R_1=\infty$.}
\end{equation}
Since, in particular, $u_n\to u$ in $\mathcal M(A)$, it follows from Lemmas \ref{lem:L-embedded} and \ref{lem:auxiliary} that $\int_A g_n\to \int_A g$. Altogether, by the General Dominated Convergence Theorem, $\partial_s f(\cdot,u_n) \to  \partial_s f(\cdot,u)$ in $L^q(A)$ for $q$ as in \eqref{eq:choice-of-q} and so
\[
\begin{aligned}
\sup_{\substack{v\in V \\ \|v\|_{V\leq1}}}&
\left| \int_\Omega \left( \partial_sf(x,u_n)-\partial_sf(x,u) \right) v^2 \,dx \right|\\
&\le \|\partial_s f(\cdot,u_n) -  \partial_s f(\cdot,u)\|_{L^q(A)} \sup_{\substack{v\in V \\ \|v\|_{\mathcal{L}(\Omega)\leq1}}}\|v\|_{L^{2q'}(A)}^2 
 \\
&\leq C^2 \|\partial_s f(\cdot,u_n) -  \partial_s f(\cdot,u)\|_{L^q(A)} \to 0\quad\mbox{ as $n\to\infty$,}
\end{aligned}
\]
with $C=C(2q')$ as in Lemma \ref{lem:L-embedded}. In conclusion, $\Phi\in C^2(V;\R)$.
\end{proof}

Next we introduce the angular function $\eta(x)=\mathfrak y(\theta)$ given by
\begin{equation}\label{eq:eta-def}
\mathfrak y(\theta):=1-N\sin^2\theta=\cos^2\theta-(N-1)\sin^2\theta, 
\end{equation}
for $\theta\in(0,\pi/2)$,  or equivalently, for $x\neq0$,
\[
\eta(x)=\sum_{i=1}^{N-1} \left(\frac{x_i}{|x|}\right)^2 - (N-1)\left(\frac{x_N}{|x|}\right)^2.
\]
In view of the above expression, it is possible to show that 
\begin{equation}\label{radial-weakly-stable-assumption-4}
    \int_{\mathbb S_r^{N-1}} \eta(x)\,d\sigma = 0\quad\mbox{for every $r>0$,}
\end{equation}
where $\mathbb S_r^{N-1}$
 is the sphere of radius $r$ of $\mathbb R^N$ and $d\sigma= (\cos\theta)^{N-2} r^{N-1} \,d\theta$. 
Indeed, as for every $i,j=1,\ldots,N$, and $r>0$
\[
\int_{\mathbb S_r^{N-1}} x_i^2\,d\sigma = \int_{\mathbb S_r^{N-1}} x_j^2\,d\sigma,
\]
it results that
\begin{equation}\label{eq:int_eta0}
\int_{\mathbb S_r^{N-1}} \eta(x)\,d\sigma 
= \frac{1}{r^2}\int_{\mathbb S_r^{N-1}}\left(\sum_{i=1}^{N-1} x_i^2-(N-1) x_N^2 \right)d\sigma 
 = 0.
\end{equation}
Explicit calculations show that $\mathfrak y$ satisfies the angular monotonicity condition required in $\mathcal K$, precisely
\begin{equation}\label{radial-weakly-stable-assumption-2}
\mathfrak{y}'(\theta) \, <\,  0 \qquad\qquad \text{ for $\theta\in (0,\pi/2)$,}
\end{equation} and that it solves the boundary value problem
\begin{equation}\label{eq:eta}
\begin{cases}
-((\cos\theta)^{N-2}\mathfrak y')'=2N (\cos\theta)^{N-2} \mathfrak y \qquad & \mbox{in } \left(0,\frac{\pi}{2}\right) \\
\mathfrak y'(0)=\mathfrak y'\left(\frac{\pi}{2}\right)=0.
\end{cases}
\end{equation}

The next proposition takes inspiration from both \cite{BCNW2024} and \cite{CowanMoameni2022}.  

\begin{proposition}\label{lem:v_def}
Under the assumptions of Theorem \ref{thm_nonradial},  
let $u_\mathrm{rad}(x)=\mathfrak u_\mathrm{rad}(r)\in\mathcal K$ be a nontrivial radial solution of \eqref{eq:main-appl}, and let $\mathfrak y$ be as in \eqref{eq:eta-def}.
Then, the function $v(x)=\mathfrak{v}(r,\theta):=\mathfrak u_\mathrm{rad}(r)\mathfrak y(\theta)$ belongs to $H^1_0(A)\cap \mathcal M(A)$ and satisfies
\begin{equation}
\label{radial-weakly-stable-assumption-1}
J''(u_\mathrm{rad})[v,v]<0. 
\end{equation}
\end{proposition}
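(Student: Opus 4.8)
The plan is to compute $J''(u_{\mathrm{rad}})[v,v]$ explicitly, exploiting the product structure of $v=\mathfrak u_{\mathrm{rad}}(r)\mathfrak y(\theta)$ to reduce it to one-dimensional integrals in the radial variable, and then to close the sign estimate using the radial Hardy inequality together with assumption \ref{f12}.

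First I would check that $v\in V=H^1_0(A)\cap\mathcal M(A)$, so that $J''(u_{\mathrm{rad}})[v,v]$ is meaningful by Lemma \ref{lem:C2}. Writing $\eta(x):=\mathfrak y(\theta)=1-N(x_N/|x|)^2$, one sees that $\eta\in C^\infty(\overline A)$ with $\eta$ and $\nabla\eta$ bounded on $A$ (here it is essential that $A$ stays away from the origin, since $R_0>0$); hence $v=\eta\,u_{\mathrm{rad}}\in H^1_0(A)$, because $u_{\mathrm{rad}}\in H^1_0(A)$ and multiplication by such an $\eta$ preserves $H^1_0$. Moreover $|v|\le (N-1)|u_{\mathrm{rad}}|$, since $\mathfrak y$ takes values in $[1-N,1]$, so $\int_A(e^{\alpha v^2}-1)\,dx\le\int_A(e^{\alpha(N-1)^2 u_{\mathrm{rad}}^2}-1)\,dx<\infty$ for every $\alpha>0$ by \eqref{eq:int-finito-K}, i.e. $v\in\mathcal M(A)$. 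By Lemma \ref{lem:C2}, $J''(u_{\mathrm{rad}})[v,v]=\|v\|_{H^1_\lambda(A)}^2-\int_A\partial_sf(x,u_{\mathrm{rad}})v^2\,dx$.

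Next I would carry out the computation. By \eqref{eq:gradient-r-theta} applied to $v$ one has $|\nabla v|^2+\lambda v^2=\bigl((\mathfrak u_{\mathrm{rad}}')^2+\lambda\mathfrak u_{\mathrm{rad}}^2\bigr)\mathfrak y^2+\mathfrak u_{\mathrm{rad}}^2(\mathfrak y')^2/r^2$, so the change of variables \eqref{eq:changeintegral} factorizes $\|v\|_{H^1_\lambda(A)}^2$ into products of radial and angular integrals. Multiplying the ODE \eqref{eq:eta} by $\mathfrak y$ and integrating by parts over $(0,\pi/2)$ — the boundary terms vanish because $\mathfrak y'(0)=\mathfrak y'(\pi/2)=0$ and $(\cos\theta)^{N-2}$ vanishes at $\pi/2$ — gives $\int_0^{\pi/2}(\mathfrak y')^2(\cos\theta)^{N-2}\,d\theta=2N\int_0^{\pi/2}\mathfrak y^2(\cos\theta)^{N-2}\,d\theta=:2Na$, with $a>0$. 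Setting $X:=\int_{R_0}^{R_1}\bigl((\mathfrak u_{\mathrm{rad}}')^2+\lambda\mathfrak u_{\mathrm{rad}}^2\bigr)r^{N-1}\,dr$ and $Y:=\int_{R_0}^{R_1}\mathfrak u_{\mathrm{rad}}^2\,r^{N-3}\,dr$, this yields $\|v\|_{H^1_\lambda(A)}^2=2\omega_{N-2}a\,(X+2NY)$, while the same change of variables gives $\int_A\partial_sf(x,u_{\mathrm{rad}})v^2\,dx=2\omega_{N-2}a\,Z$ with $Z:=\int_{R_0}^{R_1}\partial_s\mathfrak f(r,\mathfrak u_{\mathrm{rad}})\,\mathfrak u_{\mathrm{rad}}^2\,r^{N-1}\,dr$. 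Testing the weak formulation of \eqref{eq:main-appl} with $\eta=u_{\mathrm{rad}}\in V$ gives $X=\int_{R_0}^{R_1}\mathfrak f(r,\mathfrak u_{\mathrm{rad}})\,\mathfrak u_{\mathrm{rad}}\,r^{N-1}\,dr>0$ by \ref{f1}, since $u_{\mathrm{rad}}\not\equiv0$. Multiplying \ref{f12} by $s\ge0$ yields $\partial_s\mathfrak f(r,s)s^2\ge\delta\,\mathfrak f(r,s)s$, and since $\mathfrak u_{\mathrm{rad}}\ge0$ we get $Z\ge\delta X$, hence
\[
J''(u_{\mathrm{rad}})[v,v]=2\omega_{N-2}a\,(X+2NY-Z)\le 2\omega_{N-2}a\,\bigl(2NY-(\delta-1)X\bigr).
\]

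Finally I would conclude with the Hardy inequality. The radial Hardy inequality gives $\bigl(\tfrac{N-2}{2}\bigr)^2\int_{R_0}^{R_1}\mathfrak u_{\mathrm{rad}}^2\,r^{N-3}\,dr\le\int_{R_0}^{R_1}(\mathfrak u_{\mathrm{rad}}')^2\,r^{N-1}\,dr$, while the elementary bound $R_0^2\,r^{N-3}\le r^{N-1}$ for $r>R_0$ gives $\lambda R_0^2\int_{R_0}^{R_1}\mathfrak u_{\mathrm{rad}}^2\,r^{N-3}\,dr\le\lambda\int_{R_0}^{R_1}\mathfrak u_{\mathrm{rad}}^2\,r^{N-1}\,dr$, strictly if $\lambda>0$. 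Adding these, $HY\le X$ with $H$ as in \eqref{eq:def-H}, the inequality being strict when $\lambda>0$. If $\lambda=0$, assumption \ref{f12} gives $\delta-1>2N/H$, so $(\delta-1)X>\tfrac{2N}{H}X\ge 2NY$; if $\lambda>0$, then $\delta-1=2N/H$ and $(\delta-1)X=\tfrac{2N}{H}X>\tfrac{2N}{H}HY=2NY$ by the strict Hardy bound. In either case $2NY-(\delta-1)X<0$, so $J''(u_{\mathrm{rad}})[v,v]<0$, as claimed. The main point to get right is the bookkeeping that produces the sharp constant $H=\bigl(\tfrac{N-2}{2}\bigr)^2+\lambda R_0^2$ — coupling the radial Hardy inequality with the estimate $R_0^2 r^{N-3}\le r^{N-1}$ — and tracking precisely where strictness is available, so that the borderline case $\lambda>0$ (where $\delta-1=2N/H$) still closes.
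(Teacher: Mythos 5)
Your proposal is correct and follows essentially the same route as the paper's proof: the factorization of $J''(u_{\mathrm{rad}})[v,v]$ into radial and angular integrals via \eqref{eq:gradient-r-theta} and \eqref{eq:changeintegral}, the angular identity $\int_0^{\pi/2}\mathfrak y'^{\,2}(\cos\theta)^{N-2}\,d\theta=2N\int_0^{\pi/2}\mathfrak y^{2}(\cos\theta)^{N-2}\,d\theta$ from the ODE \eqref{eq:eta}, the radial equation tested with $u_{\mathrm{rad}}$, the Hardy inequality combined with $R_0^2r^{N-3}\le r^{N-1}$ to produce $H$, and finally \ref{f12} with the strictness bookkeeping for $\lambda>0$ versus $\lambda=0$. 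The only difference is cosmetic (you apply \ref{f12} globally to get $Z\ge\delta X$ before invoking Hardy, whereas the paper substitutes the radial equation and applies Hardy first, then \ref{f12} pointwise under the integral).
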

\begin{proof}
As $u_\mathrm{rad} \in \mathcal K \subset H^1_0(A)$ and $\mathfrak{y}, |\nabla \eta|\in L^\infty(A)$,  we infer that $v\in H^1_0(A)$. Moreover, since $u_\mathrm{rad} \in \mathcal K \subset \mathcal M(A)$ and $\eta\in L^\infty(A)$, for all $k>0$ it results
\[
\int_A e^{(v/k)^2}\, dx\le \int_A e^{\left(u\|\eta\|_{L^\infty(A)}/k\right)^2}\, dx<\infty,
\]
that is $v\in \mathcal M(A)$. 

In order to prove \eqref{radial-weakly-stable-assumption-1}, we take advantage of Lemma \ref{lem:C2} and of \eqref{eq:gradient-r-theta} to write
\[
\begin{aligned}
&J''(u_\mathrm{rad})[v,v]=\int_A \left( |\nabla v|^2+\lambda v^2-\partial_s f(|x|,u_\mathrm{rad})v^2\right)\,dx\\ 
&= 2\omega_{N-2}\int_{R_0}^{R_1}\bigg\{\int_0^{\frac{\pi}{2}}\bigg[\mathfrak{u}_\mathrm{rad}'^{\,2}(r)\mathfrak y^2(\theta)+\frac{\mathfrak{u}_\mathrm{rad}^2(r)\mathfrak y'^{\,2}(\theta)}{r^2}\bigg.\bigg.\\
&\bigg.\bigg.\hspace{2cm}+\mathfrak{u}_\mathrm{rad}^2(r)\mathfrak y^2(\theta)\Big(\lambda-\partial_s\mathfrak f(r,\mathfrak{u}_{\mathrm{rad}})\Big)\bigg](\cos\theta)^{N-2} r^{N-1}\,d\theta\bigg\}dr\\
&  = 2\omega_{N-2}\int_{R_0}^{R_1}\left[ \mathfrak{u}_{\mathrm{rad}}\mathfrak f(r,\mathfrak{u}_{\mathrm{rad}})-\mathfrak{u}_{\mathrm{rad}}^{\,2}\partial_s\mathfrak f(r,\mathfrak{u}_{\mathrm{rad}}) \right]r^{N-1}\, dr \cdot \int_0^{\frac{\pi}{2}} \mathfrak y^2(\theta)(\cos\theta)^{N-2}\,d\theta \, \\
& \hspace{2cm}+  2\omega_{N-2} \int_{R_0}^{R_1} \frac{\mathfrak{u}_\mathrm{rad}^2(r)}{r^2} r^{N-1}\, dr \cdot \int_0^{\frac{\pi}{2}} \mathfrak y'^{\,2}(\theta)(\cos\theta)^{N-2}\,d\theta,
\end{aligned}
\]
with $2\omega_{N-2}$ as in \eqref{eq:changeintegral} and where we have used that, since $u_\mathrm{rad}$ solves \eqref{eq:main-appl}, it satisfies
\begin{equation}\label{eq:u-rad-integrated}
\int_{R_0}^{R_1} (\mathfrak{u}_\mathrm{rad}'^{\,2} +\lambda \mathfrak{u}_\mathrm{rad}^{2}) r^{N-1}\, dr = \int_{R_0}^{R_1} \mathfrak{u}_\mathrm{rad}\mathfrak f(r,\mathfrak{u}_\mathrm{rad}) r^{N-1}\, dr.
\end{equation} 
Then we notice that \eqref{eq:eta} implies
\begin{equation}\label{eq:eta-integrated}
\int_0^{\frac{\pi}{2}} \mathfrak y'^{\,2}(\theta)(\cos\theta)^{N-2} \, d\theta = 
2N \int_0^{\frac{\pi}{2}} \mathfrak y^2(\theta)(\cos\theta)^{N-2} \, d\theta.
\end{equation}
Altogether, we find
\begin{equation}\label{eq:I''estimate}
\begin{aligned}
&J''(u_\mathrm{rad})[v,v]\\
 & =
2\omega_{N-2} \int_{R_0}^{R_1} \bigg\{ \bigg[ \mathfrak f(r,\mathfrak{u}_{\mathrm{rad}})-\mathfrak{u}_{\mathrm{rad}}\partial_s\mathfrak f(r,\mathfrak{u}_{\mathrm{rad}}) \bigg]\mathfrak{u}_{\mathrm{rad}}+2N \frac{\mathfrak{u}_\mathrm{rad}^2}{r^2}  \bigg\} r^{N-1} \, dr \\
&\hspace{7cm} \cdot\int_0^{\frac{\pi}{2}} \mathfrak y^2(\theta)(\cos\theta)^{N-2}\, d\theta.
\end{aligned}
\end{equation}
Now, Hardy's inequality gives
\[
\int_{R_0}^{R_1} \frac{\mathfrak{u}_\mathrm{rad}^2}{r^2}r^{N-1}dr\le \left(\frac{2}{N-2}\right)^2\int_{R_0}^{R_1} \mathfrak{u}_\mathrm{rad}'^{\,2}r^{N-1}dr
\]
and, since $R_0>0$ and $u_\mathrm{rad}\neq 0$, we also obtain
\begin{equation}\label{eq:estimate-u}
\int_{R_0}^{R_1} \mathfrak{u}_\mathrm{rad}^{\,2}r^{N-1}dr > R_0^2\int_{R_0}^{R_1} \frac{\mathfrak{u}_\mathrm{rad}^2}{r^2}r^{N-1}dr.
\end{equation}
The last two estimates allow to get 
\begin{equation}
\label{eq:estimate}
\int_{R_0}^{R_1}(\mathfrak{u}_\mathrm{rad}'^{\,2} +\lambda\mathfrak{u}_\mathrm{rad}^{2})r^{N-1}dr \begin{cases} \ge \displaystyle{H\int_{R_0}^{R_1} \frac{\mathfrak{u}_\mathrm{rad}^2}{r^2}r^{N-1}dr}\;&\mbox{if }\lambda=0,\medskip\\
> \displaystyle{H\int_{R_0}^{R_1} \frac{\mathfrak{u}_\mathrm{rad}^2}{r^2}r^{N-1}dr} &\mbox{if }\lambda>0,
\end{cases}
\end{equation}
with $H$ defined in \eqref{eq:def-H}. 
Hence, continuing from \eqref{eq:I''estimate} and taking into account \eqref{eq:u-rad-integrated} we have
\[
\begin{aligned}
J''(u_\mathrm{rad})[v,v]& \le
2\omega_{N-2}\int_0^{\frac{\pi}{2}} \mathfrak y^2(\theta)(\cos\theta)^{N-2} \, d\theta\cdot\\
&\qquad\cdot \int_{R_0}^{R_1}\left[\left(\frac{2N}{H}+1\right)\mathfrak{f}(r,\mathfrak{u}_\mathrm{rad})-\mathfrak{u}_\mathrm{rad}\partial_s\mathfrak{f}(r,\mathfrak{u}_\mathrm{rad})\right]
\mathfrak{u}_\mathrm{rad}r^{N-1}dr,
\end{aligned}
\]
where again the inequality is strict if $\lambda >0$.
By \ref{f12}, the above inequality provides \eqref{radial-weakly-stable-assumption-1}.
\end{proof}

\begin{proof}[Proof of Theorem \ref{thm_nonradial}]
Let $u$ be the solution of \eqref{eq:main-appl} found in Theorem \ref{thm:applied1}. Suppose, by contradiction, that $u$ is radial, so that Proposition \ref{lem:v_def} applies to the function 
\[
v=u\eta=\mathfrak u(r)\mathfrak{y}(\theta)=\mathfrak v(r,\theta),
\] 
with $\eta=\mathfrak{y}(\theta)$ as in \eqref{eq:eta-def}, leading to 
\begin{equation}
\label{eq:J''uvv}
J''(u)[v,v]<0.
\end{equation} 
Hence, since $J'(u)=0$, $v$ is a descent direction of $J$ in a neighborhood of $u$. 

We {\it claim} that there exists a small $\bar\tau>0$ such that, letting $\phi=u+\bar\tau v$, we have 
\begin{equation}\label{eq:claim}
t\phi\in\mathcal K\setminus\{0\}\quad\mbox{for all }t> 0,\quad\max_{t\ge 0}J(t\phi)=J(t_\phi \phi) < J(u),
\end{equation} 
with $t_\phi$ as in Lemma \ref{lem:g}.
This will allow us to construct an admissible path along which the maximum value of the  energy is below the mountain pass level $c=J(u)$, thus contradicting the definition of $c$ in \eqref{eq:c_level_def}.

In order to prove the claim, we first observe that there exists $\varepsilon>0$ so small that $u+\tau v\in\mathcal K$ for every $\tau\in (-\varepsilon,\varepsilon)$. Indeed, since $u\ge 0$ and $u\not\equiv 0$, for $|\tau|$ small we have $0\not\equiv u+\tau v=u(1+\tau\eta)\ge 0$;
moreover $\mathfrak v_\theta(r,\theta)=\mathfrak u(r)\mathfrak{y}_\theta(\theta)\le 0$. Thus, using that $\mathcal K$ is a cone, $t(u+\tau v)\in \mathcal K\setminus\{0\}$ for every $(\tau,t)\in (-\varepsilon,\varepsilon)\times (0,\infty)$. Let now 
\[
\psi: (-\varepsilon,\varepsilon)\times (0,\infty)\to \mathbb R, \quad \psi(\tau,t):=J'(t(u+\tau v))[u+\tau v]
\] 
and note that $\psi$ is of class $C^1$ by Lemma \ref{lem:C2}. Moreover, using that $u$ solves \eqref{eq:main-appl}, we get  $\psi(0,1)=J'(u)[u]=0$ and 
\[
\begin{aligned}
\partial_t \psi(0,1)&=J''(u)[u,u]=\int_A(f(x,u)u-\partial_s f(x,u)u^2)\, dx\\
&\le \int_A(f(x,u)-\delta f(x,u))u \, dx <0, 
\end{aligned}
\]
where in the two inequalites we have used \ref{f12}. Therefore, by the Implicit Function Theorem, there exist $0<\varepsilon_1<\varepsilon$, $\varepsilon_2>0$, and a $C^1$-function $h:(-\varepsilon_1,\varepsilon_1)\to (1-\varepsilon_2,1+\varepsilon_2)$ such that $h(0)=1$ and for all $\tau\in (-\varepsilon_1,\varepsilon_1)$ and for $t\in (1-\varepsilon_2,1+\varepsilon_2)$ we have
\begin{equation}
\label{eq:implicitfunction}
\psi(\tau,t)=0 \quad\Leftrightarrow\quad t=h(\tau).
\end{equation} 
Moreover, $h'(0)=-\frac{\partial_\tau\psi(0,1)}{\partial_t\psi(0,1)}=0$. Indeed, using again that $J'(u)=0$, the contradiction hypotesis $u=\mathfrak u(r)$, assumption \ref{f'6}, the form of $v=u\eta$, and the polar coordinate formula, we get
\[
\begin{aligned}
\partial_\tau\psi(0,1)&= J''(u)[v,u]=\int_A \left(\nabla u\cdot\nabla v+\lambda uv-\partial_sf(x,u)uv\right)\, dx\\
&=\int_A\left(f(x,u)-\partial_s f(x,u)u\right) u\eta \, dx\\
&=\int_{R_0}^{R_1}\left(\int_{\mathbb S_r^{N-1}}\eta(x)d\sigma\right)(\mathfrak f(r,\mathfrak u)-\partial_s \mathfrak f(r,\mathfrak u)\mathfrak u) \mathfrak u   \, dr=0,
\end{aligned}
\]
where in the last equality we have taken \eqref{eq:int_eta0} into account.
Thus, restricting $\varepsilon_1$ if necessary, we have $h(\tau)=1+o(\tau)$ for all $\tau\in (-\varepsilon_1,\varepsilon_1)$ and consequently 
$h(\tau)(u+\tau v)-u=\tau v+o(\tau)$. Using a second-order Taylor expansion centered at $\tau=0$, possibly reducing $\varepsilon_1$ further, we can use  \eqref{eq:J''uvv} to estimate 
\begin{equation}
\begin{aligned}\label{eq:max<}
J(h(\tau)(u+\tau v))-J(u) & =\frac{\tau^2}{2}J''(u)[v,v]+o(\tau^2) \\
& <0\quad\mbox{for all }\tau\in (-\varepsilon_1,\varepsilon_1) \setminus\{0\}.
\end{aligned}
\end{equation}
Now, let $\bar\tau\in (-\varepsilon_1,\varepsilon_1) \setminus\{0\}$ be fixed and $\phi=u+\bar\tau v$ as in \eqref{eq:claim}. By Lemma \ref{lem:g}, we know that there exists a unique $t_\phi\in (0,\infty)$ such that 
\[\max_{t\ge 0}J(t\phi)=J(t_\phi \phi).\]
Moreover, $0=g'(t_\phi)=\psi(\tau,t_\phi)$. Hence, by \eqref{eq:implicitfunction}, $t_\phi=h(\bar\tau)$ and the claim follows by \eqref{eq:max<}. 

We are now in a position to built the admissible path as described at the beginning of the proof. Let $\phi=u+\bar\tau v$ as in \eqref{eq:claim} and let $T>t_\phi$ be so large that $J(T\phi)\le 0$ (see Lemma \ref{lem:g}) and let
$\gamma:[0,1]\to\mathcal K$ be defined as $\gamma(t):=Tt\phi$. We notice that $\gamma\in \Upsilon$, moreover by Lemma \ref{lem:g} and \eqref{eq:claim} we have
\begin{equation*}
\max_{t\in[0,1]}J(\gamma(t))=\max_{t\in[0,T]}J(t\phi)=J(t_\phi \phi)=\max_{t\ge 0}J(t\phi)<J(u).
\end{equation*}
This contradicts the definition of $c$ in \eqref{eq:c_level_def}, thus concluding the proof. 
\end{proof}

Finally,  we consider explicitly the prototype cases and prove Corollary \ref{cor}.
Notice that Theorem \ref{thm:main-exp} follows from Corollary \ref{cor}, since $w\equiv 1$ satisfies \eqref{eq:hp-w} and, when $w\equiv 1$, $\beta \in (1,2)$ and $m=1$, the exponential-type nonlinearity \eqref{eq:prototype}  coincides with the nonlinear term in problem \eqref{eq:main-exp}.

\begin{proof}[Proof of Corollary \ref{cor}]
We verified in Remark \ref{rem:prototype} that both \eqref{eq:prototype} and \eqref{eq:prototype-power} satisfy \ref{f1}-\ref{f10}.  
Let us check the validity of assumptions \ref{f11} and  \ref{f12}.

\noindent (i) Consider the exponential-type nonlinearity \eqref{eq:prototype}. In view of \eqref{eq:sf_sbeta<1}, we have that $\partial_sf(x,s) \sim w(x) \beta s^{2 \beta -2} e^{s^\beta}$ as $s \to \infty$. As $\beta<2$,  we infer that for all $\alpha>0$ there exists $M=M(\alpha)>0$ such that $\partial_s f(x,s)\le e^{\alpha s^2}-1$ for $s>M$. 
Now, let $\mu$ be the constant appearing in assumption \ref{f5}-(u).  By Remark \ref{rem:prototype}, we have $\mu\in (1,\beta(m+1)-1]$. Since
$$\partial_sf(x,s) \sim w(x) \frac{\beta(m+1)-1}{m!} s^{\beta(m+1) - 2} \quad \text{ as } s \to 0 \:,$$
also using \eqref{eq:hp-w}, we deduce that there exists $\varepsilon>0$ such that
$$\partial_sf(x,s)\le \frac{\beta(m+1)-1}{m!} \|w\|_{L^\infty(\Omega)}s^{\mu-1} \quad \text{ for }0<s<\varepsilon \: .$$
Finally, $\partial_s f(x,s)\le D_M \le D_M s^{\mu-1}/\varepsilon^{\mu-1}$ for $\varepsilon<s<M$, by \ref{f10}. Altogether, both \ref{f11}-(b) and \ref{f11}-(u) hold true.

Concerning \ref{f12}, let
\begin{equation}\label{eq:delta}
\delta \in \left(\frac{2N}{H}+1,\beta(m+1)-1\right]
\;\mbox{ if }\lambda=0,\quad
\delta = \frac{2N}{H}+1 \; 
\mbox{ if }\lambda>0.
\end{equation}
Notice that, in the case $\lambda=0$, the above interval is non empty thanks to the hypothesis $\beta(m+1)-1>2N/H+1$.
Then, as in \eqref{eq:estimate ofsf_sbeta<1}, we can estimate
$$\partial_s f(x,s) s - \delta f(x,s) \geq w(x) \left[\beta(m+1) - 1 - \delta \right] s^{\beta-1} \exp_m \left( s^\beta \right) \geq 0 \quad \text{ for all } s>0,$$
where in the last inequality we used \eqref{eq:delta}. Thus, also \ref{f12} is proved.

\noindent (ii) As for the inhomogenous power-type nonlinearity \eqref{eq:prototype-power}, one can proceed similarly to the proof of (i).  Notice that in this case \ref{f12} is ensured provided that $p$ satisfies the condition in (ii) of Corollary \ref{cor}.
\end{proof}

\appendix
\section{}
\begin{proposition}\label{prop:zero-measure}
Let $\Omega\subseteq\mathbb R^N$ be a measurable set, $q\in[1,\infty)$, and $u\in L^q(\Omega)$. Then the level set $\{u=M\}:=\{x\in\Omega\,:\, u(x) = M\}$ has zero measure for all $M\in\mathbb R$, except for at most a countable set of values of $M$.  
\end{proposition}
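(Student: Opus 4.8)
The plan is to exploit the pairwise disjointness of the level sets $\{u = M\}$, $M\in\mathbb R$, together with the integrability of $|u|^q$, to control the total measure of those level sets on which $|u|$ stays bounded away from zero.

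First I would set aside the value $M=0$: it is a single point, hence harmless, and the argument will concern $M\neq 0$. For such $M$ one has the elementary Chebyshev-type bound
\[
|M|^q\,\bigl|\{u=M\}\bigr|\le \int_{\{u=M\}}|u|^q\,dx .
\]
Next, fix $j\in\mathbb N$ and consider only those $M$ with $|M|\ge 1/j$. Since the sets $\{u=M\}$ are pairwise disjoint, for any \emph{finite} collection $F$ of such values,
\[
\sum_{M\in F}\bigl|\{u=M\}\bigr|\le j^q\sum_{M\in F}\int_{\{u=M\}}|u|^q\,dx\le j^q\int_\Omega|u|^q\,dx<\infty ,
\]
and this bound is uniform in $F$. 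Consequently, for each $k\in\mathbb N$, only finitely many $M$ with $|M|\ge 1/j$ can satisfy $|\{u=M\}|\ge 1/k$, since otherwise the partial sums above would be unbounded. Taking the union over $k\in\mathbb N$, only countably many $M$ with $|M|\ge 1/j$ have positive measure level set.

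Finally I would take the union over $j\in\mathbb N$: because $\{M\in\mathbb R: M\neq 0\}=\bigcup_{j\in\mathbb N}\{M:|M|\ge 1/j\}$ and a countable union of countable sets is countable, at most countably many $M\neq 0$ have $|\{u=M\}|>0$; adjoining $M=0$ keeps the exceptional set countable, which is the assertion. I do not foresee a genuine obstacle here; the only subtlety is that $\Omega$ may have infinite measure, which is exactly why one must not attempt to sum over all level sets simultaneously but only over those with $|M|$ bounded below, where the $L^q$-bound is effective.
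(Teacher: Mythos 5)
Your argument is correct and follows essentially the same route as the paper's proof: both decompose the set of exceptional values into strata where $|M|$ and $|\{u=M\}|$ are bounded below, and use the disjointness of level sets together with the Chebyshev-type bound to show each stratum is finite. The only cosmetic difference is that you use two separate indices ($j$ for $|M|\ge 1/j$ and $k$ for $|\{u=M\}|\ge 1/k$) whereas the paper diagonalizes with a single index $n$; this changes nothing of substance.
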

\begin{proof}
Let $S:=\{M\in\mathbb R\setminus \{0\}\,:\,|\{u=M\}|>0\}$ and 
\[
S\left(\frac{1}{n}\right):=\left\{M\in\mathbb R\,:\,|M|>\frac{1}{n},\;|\{u=M\}|>\frac{1}{n}\right\}\quad	\mbox{for every }n\in\mathbb N.
\]
We observe that 
\begin{equation}
\label{eq:Sunion}
S=\bigcup_{n\in\mathbb N}S\left(\frac{1}{n}\right).
\end{equation}
We {\it claim} that, for every $n\in\mathbb N$, the set $S\left(\frac{1}{n}\right)$ is finite. Indeed, suppose by contradiction that there exists $n_0\in\mathbb N$ such that $S\left(\frac{1}{n_0}\right)$ contains an infinite countable set $\{M_j\}_{j\in\mathbb N}\subset S\left(\frac{1}{n_0}\right)$. Then, for $u\in L^q(\Omega)$ as in the statement, we can estimate 
\begin{equation*}
\begin{aligned}
\int_\Omega |u|^q dx \ge \sum_{j\in\mathbb N}\int_{\{u=M_j\}}|u|^q dx = \sum_{j\in\mathbb N}|\{u=M_j\}||M_j|^q > \sum_{j\in\mathbb N}\frac{1}{n_0^{q+1}}=\infty.
\end{aligned}
\end{equation*}
This contradicts the fact that $u\in L^q(\Omega)$, thus proving the claim. 

Altogether, since by \eqref{eq:Sunion} $S$ is a countable union of finite sets, it is countable. 
\end{proof}

\begin{proposition}\label{prop:chi}
Let $\Omega\subseteq\mathbb R^N$ be a measurable set, $q\in[1,\infty)$, and $(u_n) \subset L^q(\Omega)$ such that $u_n\to u$ a.e. in $\Omega$ for some $u \in L^q(\Omega)$. Let $M\in \mathbb R$ be such that $|\{u=M\}| = 0$. Then,  
\[
\chi_{\{u_n<M\}}\to \chi_{\{u<M\}} \quad\mbox{a.e. in $\Omega$,} 
\]
where $\chi$ is the characteristic function in \eqref{eq:chi-def}.
\end{proposition}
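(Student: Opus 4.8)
\textbf{Proof plan for Proposition \ref{prop:chi}.}

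The plan is to argue pointwise, fixing a point $x\in\Omega$ in the (full-measure) set where $u_n(x)\to u(x)$ and where in addition $u(x)\neq M$; the exceptional set is the union of the null set where $u_n\not\to u$ and the null set $\{u=M\}$, hence still null. So it suffices to show that for such an $x$ one has $\chi_{\{u_n<M\}}(x)\to\chi_{\{u<M\}}(x)$, i.e. the sequence of zeros and ones is eventually constant and equals the right value.

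I would split into the two cases $u(x)<M$ and $u(x)>M$, which is exhaustive since $u(x)\neq M$. If $u(x)<M$, set $\varepsilon:=M-u(x)>0$; by $u_n(x)\to u(x)$ there is $n_0$ with $|u_n(x)-u(x)|<\varepsilon$ for all $n\ge n_0$, hence $u_n(x)<u(x)+\varepsilon=M$ for $n\ge n_0$, so $\chi_{\{u_n<M\}}(x)=1=\chi_{\{u<M\}}(x)$ for $n\ge n_0$. If instead $u(x)>M$, set $\varepsilon:=u(x)-M>0$; for $n$ large, $u_n(x)>u(x)-\varepsilon=M$, so $\chi_{\{u_n<M\}}(x)=0=\chi_{\{u<M\}}(x)$. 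In both cases the sequence is eventually constant with the correct limit, which gives the claimed a.e. convergence.

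There is essentially no obstacle here: the only subtlety, and the reason the hypothesis $|\{u=M\}|=0$ is needed, is precisely to exclude the points where $u(x)=M$, at which the value $\chi_{\{u_n<M\}}(x)$ need not converge (it can oscillate depending on whether $u_n(x)$ approaches $M$ from above or below). The integrability hypotheses $u,u_n\in L^q(\Omega)$ play no role in this particular statement — they are only recorded for consistency with the ambient setting where the proposition is applied. I would therefore keep the proof to the short pointwise dichotomy above.

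\begin{proof}
Let $N_1\subset\Omega$ be the null set where $u_n\not\to u$ and let $N_2:=\{u=M\}$, which is null by hypothesis. Then $\Omega\setminus(N_1\cup N_2)$ has full measure in $\Omega$, and it suffices to prove that $\chi_{\{u_n<M\}}(x)\to\chi_{\{u<M\}}(x)$ for every $x\in\Omega\setminus(N_1\cup N_2)$. Fix such an $x$; then $u_n(x)\to u(x)$ and $u(x)\neq M$.

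If $u(x)<M$, let $\varepsilon:=M-u(x)>0$. Since $u_n(x)\to u(x)$, there exists $n_0\in\mathbb N$ such that $|u_n(x)-u(x)|<\varepsilon$ for all $n\ge n_0$, hence $u_n(x)<u(x)+\varepsilon=M$ for all $n\ge n_0$. Therefore $\chi_{\{u_n<M\}}(x)=1$ for all $n\ge n_0$, while $\chi_{\{u<M\}}(x)=1$; thus $\chi_{\{u_n<M\}}(x)\to\chi_{\{u<M\}}(x)$.

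If $u(x)>M$, let $\varepsilon:=u(x)-M>0$. As before there exists $n_0\in\mathbb N$ with $u_n(x)>u(x)-\varepsilon=M$ for all $n\ge n_0$, so $\chi_{\{u_n<M\}}(x)=0$ for all $n\ge n_0$, while $\chi_{\{u<M\}}(x)=0$; again $\chi_{\{u_n<M\}}(x)\to\chi_{\{u<M\}}(x)$.

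In both cases the sequence $(\chi_{\{u_n<M\}}(x))_n$ is eventually constant and converges to $\chi_{\{u<M\}}(x)$. Since this holds for every $x$ outside the null set $N_1\cup N_2$, the claim follows.
\end{proof}
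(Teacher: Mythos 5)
Your proof is correct and follows essentially the same route as the paper's: a pointwise dichotomy on $u(x)<M$ versus $u(x)>M$, with the hypothesis $|\{u=M\}|=0$ used only to discard the bad level set. You merely spell out the $\varepsilon$-details and the null-set bookkeeping that the paper leaves implicit.
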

\begin{proof}
Let $x\in\Omega$ be such that $u(x)<M$. Then $\chi_{\{u<M\}}(x)=1$ and, for $n$ large, $u_n(x)<M$, so that $\chi_{\{u_n<M\}}(x)=1$.
Similarly, if $x\in\Omega$ is such that $u(x)>M$, then for $n$ sufficiently large $\chi_{\{u_n<M\}}(x)=0=\chi_{\{u<M\}}(x)$. Altogether, since by assumption $|\{u=M\}| = 0$, we can conclude that $\chi_{\{u_n<M\}}\to \chi_{\{u<M\}}$ a.e. in $\Omega$.  
\end{proof}

\section*{Acknowledgments}
This research was partially supported by GNAMPA - INdAM. 
A. Boscaggin and F. Colasuonno were partially supported by the MUR-PRIN project 20227HX33Z funded by the European Union - Next Generation EU. 
B. Noris was supported by the MUR-PRIN project no. 2022R537CS ``NO$^3$'' granted by the European Union -- Next Generation EU.
\noindent

\bibliographystyle{abbrv}
\bibliography{biblio}

\def\cprime{$'$}
\begin{thebibliography}{10}

\bibitem{AF}
R.~A. Adams and J.~J.~F. Fournier.
\newblock {\em Sobolev spaces}, volume 140 of {\em Pure and Applied Mathematics
  (Amsterdam)}.
\newblock Elsevier/Academic Press, Amsterdam, second edition, 2003.

\bibitem{AgudeloPistoia}
O.~Agudelo and A.~Pistoia.
\newblock Boundary concentration phenomena for the higher-dimensional
  {K}eller--{S}egel system.
\newblock {\em Calc. Var.}, 55:132, 2016.

\bibitem{AM}
C.~O. Alves and D.~C. de~Morais~Filho.
\newblock Existence and concentration of positive solutions for a
  {S}chr\"odinger logarithmic equation.
\newblock {\em Z. Angew. Math. Phys.}, 69(6):Paper No. 144, 22, 2018.

\bibitem{ABT}
D.~Arcoya, C.~Bereanu, and P.~J. Torres.
\newblock Critical point theory for the {L}orentz force equation.
\newblock {\em Arch. Ration. Mech. Anal.}, 232(3):1685--1724, 2019.

\bibitem{BonheureCasterasFoldes2020}
D.~Bonheure, J.-B. Casteras, and J.~Foldes.
\newblock Singular radial solutions for the {K}eller--{S}egel equation in high
  dimension.
\newblock {\em Journal de Math{\'e}matiques Pures et Appliqu{\'e}es},
  134:204--254, 2020.

\bibitem{BonheureCasterasNoris2}
D.~Bonheure, J.-B. Casteras, and B.~Noris.
\newblock Layered solutions with unbounded mass for the {K}eller--{S}egel
  equation.
\newblock {\em Journal of Fixed Point Theory and Applications}, 19:529--558,
  2017.

\bibitem{BonheureCasterasNoris1}
D.~Bonheure, J.-B. Casteras, and B.~Noris.
\newblock Multiple positive solutions of the stationary {K}eller--{S}egel
  system.
\newblock {\em Calculus of Variations and Partial Differential Equations},
  56:1--35, 2017.

\bibitem{BNW}
D.~Bonheure, B.~Noris, and T.~Weth.
\newblock Increasing radial solutions for {N}eumann problems without growth
  restrictions.
\newblock {\em Ann. Inst. H. Poincar\'e Anal. Non Lin\'eaire}, AN 29:573--588,
  2012.

\bibitem{BCN1}
A.~Boscaggin, F.~Colasuonno, and B.~Noris.
\newblock Multiple positive solutions for a class of {$p$}-{L}aplacian
  {N}eumann problems without growth conditions.
\newblock {\em ESAIM Control Optim. Calc. Var.}, 24(4):1625--1644, 2018.

\bibitem{BCN2}
A.~Boscaggin, F.~Colasuonno, and B.~Noris.
\newblock A priori bounds and multiplicity of positive solutions for
  {$p$}-{L}aplacian {N}eumann problems with sub-critical growth.
\newblock {\em Proc. Roy. Soc. Edinburgh Sect. A}, 150(1):73--102, 2020.

\bibitem{BCNW2023}
A.~Boscaggin, F.~Colasuonno, B.~Noris, and T.~Weth.
\newblock A supercritical elliptic equation in the annulus.
\newblock {\em Ann. Inst. H. Poincar\'e Anal. Non Lin\'eaire}, 40(1):157--183,
  2023.

\bibitem{BCNW2024}
A.~Boscaggin, F.~Colasuonno, B.~Noris, and T.~Weth.
\newblock Multiplicity and symmetry breaking for supercritical elliptic
  problems in exterior domains.
\newblock {\em Nonlinearity}, 37(10):Paper No. 105012, 26, 2024.

\bibitem{Brezis}
H.~Brezis.
\newblock {\em Functional analysis, {S}obolev spaces and partial differential
  equations}.
\newblock Universitext. Springer, New York, 2011.

\bibitem{BL}
H.~Br\'ezis and E.~Lieb.
\newblock A relation between pointwise convergence of functions and convergence
  of functionals.
\newblock {\em Proc. Amer. Math. Soc.}, 88(3):486--490, 1983.

\bibitem{ColasuonnoNoris2017}
F.~Colasuonno and B.~Noris.
\newblock A {$p$}-{L}aplacian supercritical {N}eumann problem.
\newblock {\em Discrete Contin. Dyn. Syst.}, 37(6):3025--3057, 2017.

\bibitem{CM_Trans}
C.~Cowan and A.~Moameni.
\newblock A new variational principle, convexity, and supercritical {N}eumann
  problems.
\newblock {\em Trans. Amer. Math. Soc.}, 371(9):5993--6023, 2019.

\bibitem{CowanMoameni2022JDE}
C.~Cowan and A.~Moameni.
\newblock Supercritical elliptic problems on nonradial domains via a nonsmooth
  variational approach.
\newblock {\em J. Differential Equations}, 341:292--323, 2022.

\bibitem{CowanMoameni2022}
C.~Cowan and A.~Moameni.
\newblock On supercritical elliptic problems: existence, multiplicity of
  positive and symmetry breaking solutions.
\newblock {\em Math. Ann.}, pages 1--64, 2023.

\bibitem{dFMR}
D.~G. de~Figueiredo, O.~H. Miyagaki, and B.~Ruf.
\newblock Elliptic equations in {${\bf R}^2$} with nonlinearities in the
  critical growth range.
\newblock {\em Calc. Var. Partial Differential Equations}, 3(2):139--153, 1995.

\bibitem{JL}
D.~D. Joseph and T.~S. Lundgren.
\newblock Quasilinear {D}irichlet problems driven by positive sources.
\newblock {\em Arch. Rational Mech. Anal.}, 49:241--269, 1972/73.

\bibitem{KW}
H.~Kikuchi and J.~Wei.
\newblock A bifurcation diagram of solutions to an elliptic equation with
  exponential nonlinearity in higher dimensions.
\newblock {\em Proc. Roy. Soc. Edinburgh Sect. A}, 148(1):101--122, 2018.

\bibitem{LM}
R.~Livrea and S.~A. Marano.
\newblock Existence and classification of critical points for nondifferentiable
  functions.
\newblock {\em Adv. Differential Equations}, 9(9-10):961--978, 2004.

\bibitem{Miy}
Y.~Miyamoto.
\newblock Classification of bifurcation diagrams for elliptic equations with
  exponential growth in a ball.
\newblock {\em Ann. Mat. Pura Appl. (4)}, 194(4):931--952, 2015.

\bibitem{MN}
Y.~Miyamoto and Y.~Naito.
\newblock Singular solutions for semilinear elliptic equations with general
  supercritical growth.
\newblock {\em Ann. Mat. Pura Appl. (4)}, 202(1):341--366, 2023.

\bibitem{NS}
K.~Nagasaki and T.~Suzuki.
\newblock Radial solutions for {$\Delta u+\lambda e^u=0$} on annuli in higher
  dimensions.
\newblock {\em J. Differential Equations}, 100(1):137--161, 1992.

\bibitem{Pa}
F.~Pacard.
\newblock Radial and nonradial solutions of {$-\Delta u=\lambda f(u)$}, on an
  annulus of {$\bold R^n,\; n\geq 3$}.
\newblock {\em J. Differential Equations}, 101(1):103--138, 1993.

\bibitem{PistoiaVaira2015}
A.~Pistoia and G.~Vaira.
\newblock Steady states with unbounded mass of the {K}eller--{S}egel system.
\newblock {\em Proceedings of the Royal Society of Edinburgh Section A:
  Mathematics}, 145(1):203--222, 2015.

\bibitem{RR}
M.~M. Rao and Z.~D. Ren.
\newblock {\em Theory of {O}rlicz spaces}, volume 146 of {\em Monographs and
  Textbooks in Pure and Applied Mathematics}.
\newblock Marcel Dekker, Inc., New York, 1991.

\bibitem{Royden}
H.~Royden and P.~M. Fitzpatrick.
\newblock {\em Real analysis}.
\newblock China Machine Press, 2010.

\bibitem{Ruf}
B.~Ruf.
\newblock A sharp {T}rudinger-{M}oser type inequality for unbounded domains in
  {$\Bbb R^2$}.
\newblock {\em J. Funct. Anal.}, 219(2):340--367, 2005.

\bibitem{RufTerraneo}
B.~Ruf and E.~Terraneo.
\newblock {\em The Cauchy Problem for a Semilinear Heat Equation with Singular
  Initial Data}, pages 295--309.
\newblock Birkh{\"a}user Basel, Basel, 2002.

\bibitem{ST}
E.~Serra and P.~Tilli.
\newblock Monotonicity constraints and supercritical {N}eumann problems.
\newblock {\em Ann. Inst. H. Poincar\'e{} C Anal. Non Lin\'eaire},
  28(1):63--74, 2011.

\bibitem{S}
A.~Szulkin.
\newblock Minimax principles for lower semicontinuous functions and
  applications to nonlinear boundary value problems.
\newblock {\em Ann. Inst. H. Poincar\'{e} Anal. Non Lin\'{e}aire},
  3(2):77--109, 1986.

\end{thebibliography}

\end{document}